\definecolor{verylight}{gray}{0.97}
\definecolor{light}{gray}{0.9}
\definecolor{medium}{gray}{0.85}
\definecolor{dark}{gray}{0.6}
 \def\G{{\mathcal G}}
 \def\F{{\mathcal F}}
 \def\D{{\mathcal D}}
 \def\B{{\mathcal B}}
  \def\I{{\mathcal I}}
 \def\0b{{\mathbf 0}}
\def\Nb{{\mathbb N}}
\def\reg{{\mathbf reg}}
 \def\opn#1#2{\def#1{\operatorname{#2}}} 
 \opn\chara{char} \opn\length{\ell} \opn\pd{pd} \opn\rk{rk}
 \opn\projdim{proj\,dim} \opn\injdim{inj\,dim} \opn\rank{rank}
 \opn\depth{depth} \opn\grade{grade} \opn\height{height}
 \opn\embdim{emb\,dim} \opn\codim{codim}
 \opn\Tr{Tr} \opn\bigrank{big\,rank}
 \opn\superheight{superheight}\opn\lcm{lcm}
 \opn\trdeg{tr\,deg}
 \opn\reg{reg} \opn\lreg{lreg} \opn\ini{in} \opn\lpd{lpd}
 \opn\size{size} \opn\sdepth{sdepth}
 \opn\link{link}\opn\fdepth{fdepth}\opn\lex{lex}
 \opn\tr{tr}
 \opn\type{type}
 \opn\gap{gap}
 \opn\arithdeg{arith-deg}
 \opn\HS{HS}
 \opn\GL{GL}
 \opn\div{div} \opn\Div{Div} \opn\cl{cl} \opn\Cl{Cl}
 \opn\Spec{Spec} \opn\Supp{Supp} \opn\supp{supp} \opn\Sing{Sing}
 \opn\Ass{Ass} \opn\Min{Min}\opn\Mon{Mon}
 \opn\Ann{Ann} \opn\Rad{Rad} \opn\Soc{Soc}\opn\Deg{Deg}
 \opn\Im{Im} \opn\Ker{Ker} \opn\Coker{Coker} \opn\Am{Am}
 \opn\Hom{Hom} \opn\Tor{Tor} \opn\Ext{Ext} \opn\End{End}
 \opn\Aut{Aut} \opn\id{id}
 \opn\nat{nat}
 \opn\pff{pf}
 \opn\Pf{Pf} \opn\GL{GL} \opn\SL{SL} \opn\mod{mod} \opn\ord{ord}
 \opn\Gin{Gin} \opn\Hilb{Hilb}\opn\sort{sort}
 \opn\PF{PF}\opn\Ap{Ap}
 \opn\mult{mult}
 \opn\bight{bight}
 \opn\aff{aff}
 \opn\relint{relint} \opn\st{st}
 \opn\lk{lk} \opn\cn{cn} \opn\core{core} \opn\vol{vol}  \opn\inp{inp} \opn\nilpot{nilpot}
 \opn\link{link} \opn\star{star}\opn\lex{lex}\opn\set{set}
 \opn\width{wd}
 \opn\Fr{F}
 \opn\QF{QF}
 \opn\G{G}
 \opn\type{type}\opn\res{res}
 \opn\conv{conv}
 \opn\Ind{Ind}
 \opn\gr{gr}
 \def\pot#1#2{#1[\kern-0.28ex[#2]\kern-0.28ex]}
 \opn\dirlim{\underrightarrow{\lim}}
 \opn\inivlim{\underleftarrow{\lim}}
 \def\Implies{\ifmmode\Longrightarrow \else
         \unskip${}\Longrightarrow{}$\ignorespaces\fi}
 \def\implies{\ifmmode\Rightarrow \else
         \unskip${}\Rightarrow{}$\ignorespaces\fi}
 \def\iff{\ifmmode\Longleftrightarrow \else
         \unskip${}\Longleftrightarrow{}$\ignorespaces\fi}
 \newtheorem{Theorem}{Theorem}[section]
 \newtheorem{Lemma}[Theorem]{Lemma}
 \newtheorem{Corollary}[Theorem]{Corollary}
 \newtheorem{Remark}[Theorem]{Remark}
 \newtheorem{Example}[Theorem]{Example}
 \newtheorem{Definition}[Theorem]{Definition}
 \newtheorem{Conjecture}[Theorem]{Conjecture}
 \let\epsilon\varepsilon
 \let\kappa=\varkappa
 \def\qed{\ifhmode\textqed\fi
       \ifmmode\ifinner\quad\qedsymbol\else\dispqed\fi\fi}
 \def\textqed{\unskip\nobreak\penalty50
        \hskip2em\hbox{}\nobreak\hfil\qedsymbol
        \parfillskip=0pt \finalhyphendemerits=0}
 \def\dispqed{\rlap{\qquad\qedsymbol}}
 \opn\dis{dis}
 \def\pnt{{\raise0.5mm\hbox{\large\bf.}}}
 \opn\Lex{Lex}
\begin{document}

\title{The facet ideals of  chessboard complexes}
\author{Chengyao Jiang, Yakun Zhao, Hong Wang  and Guangjun Zhu$^{^*}$}

\address{Authors' address:  School of Mathematical Sciences, Soochow University, Suzhou 215006, P. R. China}

\email{1562420606@qq.com(Chengyao Jiang),1768868280@qq.com(Yakun Zhao),\linebreak[4]651634806@qq.com(Hong Wang),zhuguangjun@suda.edu.cn(Corresponding author:Guangjun Zhu).}

\thanks{$^{\ast}$ Corresponding author}
\thanks{2020 {\em Mathematics Subject Classification}.
    Primary 13D02; Secondary 13F55, 13C15}

\thanks{Keywords: Chessboard complex, facet ideal, irreducible decomposition, depth, regularity}

\begin{abstract}
    In this paper we describe the irreducible decomposition of the facet ideal $\F(\Delta_{m,n})$ of the chessboard complex $\Delta_{m,n}$ with $n\geq m$.
We also provide some lower bounds for depth and regularity of the facet ideal  $\F(\Delta_{m,n})$. When  $m\leq 3$,  we prove that these  lower bounds can be obtained.
\end{abstract}

\maketitle

\section{Introduction}
Graph complexes have provided an important link between combinatorics and algebra, topology, and geometry (see \cite{J,W}).
 The two most important graph complexes are the matching complex and independence complex; their algebraic and topological properties
have been widely studied by many authors (see \cite{BLVZ,FH,J,M,W,Zi, Zh2}).
The chessboard complex,  which is  the matching complex   $\mathcal{M}(K_{m,n})$ of a complete bipartite graph $K_{m,n}$, is first introduced in the thesis of Garst  \cite{G}.
He showed that $\mathcal{M}(K_{m,n})$ is Cohen--Macaulay if and only if $n\geq 2m-1$. Ziegler in \cite{Zi} strengthened this result by showing that  $\mathcal{M}(K_{m,n})$ is shellable if $n\geq 2m-1$. Hence $\mathcal{M}(K_{m,n})$ has the homotopy type of a wedge of $(m-1)$-spheres if $n\geq 2m-1$.  Freidman and    Hanlon studied Betti numbers of chessboard complex  in \cite{FH}. They showed that $b_{r-1}(\Delta_{m,n})=0$ if and only if $(m-r)(n-r)>r$, and $b_{v-1}(\Delta_{m,n})>0$ if and only if $n\geq 2m-4$ or $(m,n)=(6,6),(7,7),(8,9)$,
where $b_{i}(\Delta_{m,n})$ is the $i$-th Betti number of $\Delta_{m,n}$, it equals the rank of the homology group $H_i(\Delta_{m,n})$.
The chessboard complex may be  regarded as  a simplicial complex  formed by all admissible rook configurations on an $m\times n$ chessboard, while an admissible rook configuration on a chessboard refers to
a subset of squares of the chessboard such that no two squares lie in the same row or in the same column. It is for this reason
that the name “chessboard complex” is used.

Given a simplicial complex $\Delta$ on the vertex set $\{x_1,\ldots,x_n\}$, we consider the polynomial ring  $S=k[x_1,\ldots,x_n]$ in $n$ variables over a field $k$.
 The Stanley-Reisner ideal of $\Delta$, denoted by $\I_{\Delta}$, is the squarefree monomial ideal of $S$ given by
$$
\I_{\Delta}=(x_{i_1}\cdots x_{i_s}\mid\{x_{i_1},\ldots, x_{i_s}\} \text{\ is not a face of \ } \Delta ).
$$
The {\it facet ideal} of $\Delta$, denoted by $\F(\Delta)$, is the squarefree monomial ideal of $S$ given by
$$
\F(\Delta)=(x_{\ell_1}\cdots x_{\ell_s}\mid\{x_{\ell_1},\ldots, x_{\ell_s}\} \text{\ is  a facet of \ } \Delta ).
$$
The Stanley-Reisner ring of $\Delta$ is $k[\Delta]=S/\I_{\Delta}$.
For any positive integers $m,n$, Bj\" orner et al. in \cite{BLVZ} showed that a chessboard complex $\Delta_{m,n}$ is $(v-2)$-connected
and $\depth\,( k[\Delta_{m,n}])=v$ for any positive integers $m,n$, where  $v=\min\{m,n,\lfloor\frac{m+n+1}{3}\rfloor\}$ and $\left\lfloor\frac{m+n+1}{3}\right\rfloor$ is the largest integer $\leq \frac{m+n+1}{3}$.

H\`a and  Van Tuyl in \cite{HT} studied  edge ideals of $m$-uniform
hypergraphs, they showed that  if $H$ is a properly-connected hypergraph with the edge ideal $I(H)$ and $c$ is
the maximal number of pairwise $(m+1)$-disjoint edges of $H$, then
$\reg\,(S/I(H))\geq c(m-1)$
and the equality holds if $H$ is triangulated. Obviously, $\F(\Delta_{m,n})$ can be regarded  as the edge ideal of an $m$-uniform hypergraph, but this $m$-uniform hypergraph is not properly-connected and has no pairwise $(m+1)$-disjoint edges.

As far as we know,  little is known about algebraic properties of the facet ideal of a chessboard complex. In this article,
we focus on  properties corresponding to the irreducible decomposition (see Theorem \ref{decomposition}), depth and regularity of the facet ideal  of a chessboard complex. Our main results are as follows:
\begin{Theorem}
Let $\Delta_{m,n}$ be a chessboard complex with $n\geq m\geq 1$.
Let $V$ be its vertex set and  $\F(\Delta_{m,n})$ be its facet ideal.
 A  prime ideal $P$  of $S$ is a minimal prime of  $\F(\Delta_{m,n})$   if and only if $P$ is  generated by a set $C$,  which is obtained from $V$ by deleting all elements in $s$ rows and all elements in  $m-1-s$ columns for some $0\leq s\leq m-1$.
\end{Theorem}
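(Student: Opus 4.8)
The plan is to translate the condition ``$P$ is a minimal prime of $\F(\Delta_{m,n})$'' into a statement about minimal vertex covers and then to determine those covers combinatorially. Index the vertices as $x_{ij}$ with $1\le i\le m$ and $1\le j\le n$, so $V=\{x_{ij}\}$. Since $n\ge m$, every maximal face has exactly $m$ vertices, so the facets are precisely the sets $F_\sigma=\{x_{i,\sigma(i)}\mid 1\le i\le m\}$ where $\sigma\colon[m]\to[n]$ ranges over all injections. Because $\F(\Delta_{m,n})$ is the squarefree monomial ideal generated by the monomials $\prod_{v\in F_\sigma}v$, its minimal primes are exactly the monomial primes $P_C=(v\mid v\in C)$ with $C\subseteq V$ an inclusion-minimal set meeting every $F_\sigma$ (an inclusion-minimal cover). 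Thus it suffices to show that the minimal covers of the family $\{F_\sigma\}$ are precisely the sets obtained from $V$ by deleting all vertices in $s$ rows and all vertices in $m-1-s$ columns, $0\le s\le m-1$.

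The main tool I would use is K\"onig's theorem. A set $C$ is a cover iff its complement $D=V\setminus C$ contains no facet, i.e.\ iff the bipartite graph with parts $[m]$ (rows) and $[n]$ (columns) having an edge $\{i,j\}$ for each $x_{ij}\in D$ has no matching saturating $[m]$; by K\"onig's theorem this holds iff that graph has a vertex cover of size at most $m-1$, i.e.\ iff there exist $R\subseteq[m]$ and $T\subseteq[n]$ with $|R|+|T|\le m-1$ such that $i\in R$ or $j\in T$ for every $x_{ij}\in D$. Writing $C(R,T):=\{x_{ij}\mid i\notin R,\ j\notin T\}$, this says: $C$ is a cover iff $C\supseteq C(R,T)$ for some $R,T$ with $|R|+|T|\le m-1$. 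One must also record that $C(R,T)$ is itself a cover whenever $|R|+|T|\le m-1$: a facet contained in $V\setminus C(R,T)$ would force the injection $\sigma$ to map the $m-|R|$ rows outside $R$ into $T$, whence $m-|R|\le|T|$, contradicting $|R|+|T|\le m-1$. Consequently every minimal cover equals some $C(R,T)$.

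It then remains to pin down which $C(R,T)$ are minimal. If $|R|+|T|<m-1$, then $|T|\le m-2<n$, so adjoining one more column to $T$ produces a cover strictly smaller than $C(R,T)$; hence a minimal cover must satisfy $|R|+|T|=m-1$, and with $s:=|R|$ these are exactly the sets in the statement. Conversely, each $C(R,T)$ with $|R|=s$ and $|T|=m-1-s$ is minimal: it is a cover by the above, and deleting any single vertex $x_{i_0j_0}\in C(R,T)$ destroys this, because one can build a facet $F_\sigma\subseteq (V\setminus C(R,T))\cup\{x_{i_0j_0}\}$ by setting $\sigma(i_0)=j_0$, bijecting the remaining $m-1-s$ rows outside $R$ with the $m-1-s$ columns of $T$, and injecting the $s$ rows of $R$ into the $n-1-(m-1-s)=n-m+s\ge s$ still-unused columns, the last step being possible precisely because $n\ge m$.

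The main obstacle is the careful bookkeeping around K\"onig's theorem: using $|R|+|T|\le m-1$ to make $C(R,T)$ a cover, $|R|+|T|=m-1$ to exclude strictly smaller covers, and $n\ge m$ in the explicit facet construction that makes minimality sharp. The boundary cases ($m=1$, or $s=0$, or $s=m-1$) should be checked but are immediate; for instance $m=1$ forces $s=0$ and $C(R,T)=V$, consistent with $\F(\Delta_{1,n})=(x_{11},\dots,x_{1n})$ having the unique minimal prime $(x_{11},\dots,x_{1n})$.
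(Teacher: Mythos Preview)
Your proposal is correct, and it takes a genuinely different route from the paper. Both you and the paper begin by invoking the standard correspondence between minimal primes of a facet ideal and minimal vertex covers of the complex, but the arguments then diverge.

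The paper proceeds by direct verification: it fixes a set $C$ of the stated form, checks by a pigeonhole count that every facet meets $C$, and then shows minimality by explicitly building, for each $x_{pq}\in C$, a facet meeting $C$ only in $x_{pq}$. This is elementary but, as written, it only establishes the ``if'' direction; the paper does not give a separate argument that \emph{every} minimal vertex cover arises in this way. Your approach via K\"onig's theorem supplies exactly this missing structural step: by translating ``$D=V\setminus C$ contains no facet'' into ``the bipartite graph on $[m]\sqcup[n]$ with edge set $D$ has no matching saturating $[m]$'', K\"onig gives you a vertex cover $R\sqcup T$ of size at most $m-1$, whence $C\supseteq C(R,T)$. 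This shows at once that every cover contains some $C(R,T)$, so every minimal cover \emph{equals} some $C(R,T)$, and a one-line monotonicity argument forces $|R|+|T|=m-1$. Your minimality argument (constructing an explicit facet after deleting one vertex) is essentially the same as the paper's.

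In short: the paper's approach is a hands-on check that the listed sets are minimal covers, while your K\"onig argument gives a clean characterization of \emph{all} covers and hence of all minimal ones. Your route is slightly less elementary (it imports a theorem) but it is more complete and makes the ``only if'' direction transparent.
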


\begin{Corollary} Let $\Delta_{m,n}$ be  a chessboard complex with $n\geq m\geq 1$, one has
\begin{enumerate}
 \item[(a)] $\height\,(\F(\Delta_{m,n}))=n$,
 \item[(b)] $\dim\,(S/\F(\Delta_{m,n}))=(m-1)n$,
 \item[(c)] $\bight\,(\F(\Delta_{m,n}))=\left
\{\begin{array}{l@{\  \ }l}
\lfloor\frac{(n+1)^2}{4}\rfloor&\text{if   $n<2m-1$,}\\
{(n-m+1)m}&\text{otherwise.}
\end{array}\right.$
\end{enumerate}
 \end{Corollary}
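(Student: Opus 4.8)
The plan is to combine Theorem~\ref{decomposition} with an elementary optimisation of a single concave quadratic in one integer variable. By Theorem~\ref{decomposition}, every minimal prime $P$ of $\F(\Delta_{m,n})$ arises by choosing $s$ rows and $m-1-s$ columns of the $m\times n$ board, for some $0\le s\le m-1$, and letting $P$ be generated by the variables corresponding to the squares that avoid all the chosen rows and all the chosen columns; these squares form an $(m-s)\times(n-m+1+s)$ subgrid, so $P$ is minimally generated by $(m-s)(n-m+1+s)$ of the variables of $S$ and hence $\height\,P=(m-s)(n-m+1+s)$. Writing $t=m-s$, which runs through $\{1,2,\dots,m\}$ as $s$ runs through $\{m-1,\dots,0\}$, this becomes $\height\,P=t(n+1-t)=:g(t)$, so
\[
\height\,(\F(\Delta_{m,n}))=\min_{1\le t\le m}g(t)\qquad\text{and}\qquad \bight\,(\F(\Delta_{m,n}))=\max_{1\le t\le m}g(t).
\]

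For part (a), since $g$ is a concave quadratic its minimum on $\{1,\dots,m\}$ is attained at an endpoint; as $g(1)=n$ and $g(m)-g(1)=m(n+1-m)-n=(m-1)(n-m)\ge 0$ because $n\ge m$, the minimum equals $g(1)=n$. Part (b) then follows from the standard fact that for a proper nonzero ideal of the polynomial ring $S$ in $mn$ variables one has $\dim\,(S/\F(\Delta_{m,n}))=mn-\height\,(\F(\Delta_{m,n}))=mn-n=(m-1)n$.

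For part (c) I would maximise $g(t)=(n+1)t-t^2$ over $t\in\{1,\dots,m\}$. Its unconstrained real maximum is at $t=\tfrac{n+1}{2}$, of value $\tfrac{(n+1)^2}{4}$, and the maximum of $g$ over all integers is $\lfloor\frac{(n+1)^2}{4}\rfloor$, attained at the integer(s) nearest $\tfrac{n+1}{2}$ (checked separately for $n$ odd and $n$ even). If $n\ge 2m-1$, then $\tfrac{n+1}{2}\ge m$, so $g$ is increasing on $[1,m]$ and the maximum over $\{1,\dots,m\}$ is $g(m)=m(n+1-m)=(n-m+1)m$. If $n<2m-1$, then $\tfrac{n+1}{2}<m$ and a nearest integer to $\tfrac{n+1}{2}$ already lies in $\{1,\dots,m-1\}$, so the maximum equals $\lfloor\frac{(n+1)^2}{4}\rfloor$; this yields the displayed formula, the two branches agreeing at $n=2m-1$ (both equal $m^2$).

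Essentially all of this is bookkeeping once Theorem~\ref{decomposition} is in hand; the one place demanding a little care is part (c), where one must verify that an integer realising the unconstrained maximum of $g$ actually lies in the admissible range $\{1,\dots,m\}$ and then match the value to $\lfloor\frac{(n+1)^2}{4}\rfloor$, and one should also glance at the degenerate cases $m=1$ and $m=n$, where the two endpoints $t=1$ and $t=m$ already carry the same value of $g$. I do not anticipate a genuine obstacle.
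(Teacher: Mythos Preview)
Your proposal is correct and follows essentially the same line as the paper's proof: both extract from Theorem~\ref{decomposition} that the height of each minimal prime is a quadratic function of the single parameter counting deleted rows, and then optimise that quadratic over its integer range. Your substitution $t=m-s$, giving the symmetric form $g(t)=t(n+1-t)$, is a minor cosmetic improvement over the paper's completed-square expression $-[s+\tfrac{n-(2m-1)}{2}]^2+\tfrac{(n+1)^2}{4}$, but the content of the argument is identical.
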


 \begin{Theorem}
Let $\Delta_{m,n}$ be a chessboard complex with $n\geq m\geq 1$, then
\begin{enumerate}
 \item[(a)]  $\reg(S/\F(\Delta_{m,n}))\geq 2(m-1)$.
 \item[(b)] $\depth\,(S/\F(\Delta_{m,n}))\geq 2(m-1)$.
 \end{enumerate}
The equalities hold if $m\leq 3$.
\end{Theorem}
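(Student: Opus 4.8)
The plan is to read the invariants off the combinatorial structure produced by the irreducible decomposition (Theorem~\ref{decomposition}). Since $n\ge m$, every facet of $\Delta_{m,n}$ is the cell set of an injection $[m]\to[n]$, so $\F(\Delta_{m,n})=\I_\Gamma$ is the Stanley--Reisner ideal of the simplicial complex $\Gamma$ on the cell set $V$ whose faces are the cell sets that contain no full rook placement. By Theorem~\ref{decomposition} a cell set $F$ is a face of $\Gamma$ exactly when its cells lie in some $s$ rows together with $m-1-s$ columns; equivalently,
$$\Gamma=\bigcup_{\pp\in\Min(\F(\Delta_{m,n}))}\langle\,V\setminus\pp\,\rangle ,$$
a union of full simplices, one for each minimal prime $\pp$ (identifying $\pp$ with its set of variable generators, and writing $\langle A\rangle$ for the full simplex on $A$). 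Any intersection of these simplices is again a simplex or the void complex, so the Nerve Lemma gives $\Gamma_W\simeq N_W$ for every $W\subseteq V$, where $N_W$ is the nerve of $\{(V\setminus\pp)\cap W:(V\setminus\pp)\cap W\ne\emptyset\}$; the same applies to each link $\lk_\Gamma(W)$, which is again such a union. Since $\F(\Delta_{m,n})$ is squarefree, Hochster's formula rewrites (a) as
$$\reg(S/\F(\Delta_{m,n}))=\max_{W\subseteq V}\bigl\{\,i+1:\tilde H_i(N_W;k)\ne0\,\bigr\},$$
while the standard equivalence $\depth(S/\I_\Delta)\ge t\iff\Delta^{(t-1)}$ is Cohen--Macaulay rewrites the bound in (b) as the Cohen--Macaulayness of the $(2m-3)$-skeleton $\Gamma^{(2m-3)}$. (Because the generators of $\F(\Delta_{m,n})$ have degree $m$, the full $(m-2)$-skeleton of the simplex on $V$ already sits inside $\Gamma$, which gives the weaker bound $\depth\ge m-1$; the task is to double it.)

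For the lower bound in (a): restricting $\Gamma$ to the cells of an $m\times m$ sub-board returns $\Gamma$ for $\Delta_{m,m}$, so by monotonicity of regularity under restriction it suffices to take $n=m$ and exhibit one nonzero class $\tilde H_{2m-3}(\Gamma;k)=\tilde H_{2m-3}(N;k)$ in the full nerve. For $m=2$ this nerve is the complete bipartite graph $K_{2,m}$, with $\tilde H_1\ne 0$; for general $m$ the core is to build an explicit $(2m-3)$-cycle by nesting the $m=2$ pattern over the $m$ rows and to certify that it is not a boundary --- conveniently done via Terai's duality $\reg(\I_\Gamma)=\pd(k[\Gamma^\vee])$, which (with Auslander--Buchsbaum) reduces the claim to showing that $k[\Gamma^\vee]$, whose facets are the complements of rook placements, has depth at most $mn-2m+1$. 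For the lower bound in (b): since $\Gamma^{(2m-3)}$ is the union of the $(2m-3)$-skeleta of the simplices $\langle V\setminus\pp\rangle$, the plan is to order the minimal primes so that at each step the newly added skeleton meets the union of the earlier ones in a pure Cohen--Macaulay complex of dimension $2m-4$, and then invoke the gluing lemma for Cohen--Macaulay complexes; equivalently, via Reisner's criterion one must bound the connectivity of the nerves of all links of $\Gamma$, and here the connectivity estimates for chessboard complexes in \cite{BLVZ} provide the model. I expect this to be one of the two main obstacles.

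For the equalities when $m\le 3$ one must add the reverse inequalities. The case $m=1$ is trivial ($S/\F(\Delta_{1,n})=k$), and for $m=2$ every nerve $N_W$ is a graph and every link-nerve is even simpler, so $\tilde H_i(N_W)=0$ for $i\ge2$ and the relevant skeleta are visibly Cohen--Macaulay (and the next one visibly not), giving $\reg=\depth=2$. The remaining case $m=3$ is the heart of the matter: the single nonvanishing $\tilde H_3(N;k)\ne 0$ produced for (a) already gives $\reg\ge4$ and, through the $W=\emptyset$ summand of $H^3_\mm$--$H^4_\mm$ of $k[\Gamma]$, also $\depth\le4$, so together with part (b) it remains only to prove the uniform vanishing
$$\tilde H_i(N_W;k)=0\qquad\text{for all }i\ge4\text{ and all }W\subseteq V ;$$
that is, every restriction of the $3\times n$ no-placement complex must be shown to be homotopy equivalent to a wedge of spheres of dimension at most $3$. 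This is the main obstacle: the nerves $N_W$ are honestly high-dimensional (for instance all simplices of the form ``one row plus one column'' share a cell, producing large simplices in $N_W$), so the vanishing has to be extracted by a discrete-Morse or explicit-collapsing argument on $N_W$, organised according to the three types of covering pair $(s,m-1-s)\in\{(2,0),(1,1),(0,2)\}$. Granting this, $\reg(S/\F(\Delta_{3,n}))=\depth(S/\F(\Delta_{3,n}))=4$.
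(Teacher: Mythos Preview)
Your framework---read $\F(\Delta_{m,n})$ as a Stanley--Reisner ideal, then use Hochster and the Nerve Lemma---is coherent and genuinely different from the paper's, but the three steps you flag as ``obstacles'' are precisely where the proof is missing, and in each case the paper takes a concrete algebraic route that you have bypassed.

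For the lower bound in (a) you vastly overcomplicate matters. The paper simply exhibits the induced matching $F_1=\{x_{11},x_{22},\ldots,x_{mm}\}$, $F_2=\{x_{12},x_{23},\ldots,x_{(m-1)m},x_{m1}\}$ of $\Delta_{m,n}$ and invokes Lemma~\ref{matching} to get $\reg(S/\F(\Delta_{m,n}))\ge|F_1\cup F_2|-2=2(m-1)$ in one line; no Terai duality or explicit nerve cycle is needed. For (b), the paper never touches skeleta of $\Gamma$: it proves the stronger inductive statement (Theorem~\ref{depth4}) that $\depth\bigl(S/\sum_{\Omega'}\F(\D_{i_1,\ldots,i_m})\bigr)\ge 2(m-1)$ for every nonempty subcollection $\Omega'$ of $m\times m$ sub-boards, by peeling off the last row with Corollary~\ref{add3} and Remark~\ref{depth2} and inducting on $m$. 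Your Cohen--Macaulay gluing plan is not obviously feasible: already for $m=3$ the facet-simplices ``row~$1\cup$ column~$1$'' and ``row~$2\cup$ column~$2$'' meet in only two cells, so pairwise intersections of the $\langle V\setminus\pp\rangle$ need not have dimension $2m-4$, and you give no ordering that controls intersections with unions.

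For $m=3$ your target $\tilde H_i(N_W;k)=0$ for all $i\ge4$ and all $W$ is the right reformulation via Hochster, but you offer no argument beyond ``discrete Morse or explicit collapsing,'' which is a plan, not a proof. The paper avoids this homological problem entirely: the regularity upper bound (Theorem~\ref{facetcover}) comes from a chain of short exact sequences indexed by $\mathcal{G}(\F(\Delta_{2,n}))$ with explicit colon-ideal computations, and the depth equality (Theorem~\ref{m=3}) from computing $\reg(\F(\Delta_{3,n})^{\vee})=3n-4$ via Alexander duality and further colon computations (Lemmas~\ref{depth5} and~\ref{depth6}). These are technical but complete; your homological vanishing remains open.
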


 Moradi in \cite {M} showed that  if $G$ is a graph with $m$-clique ideal $K_m(G)$ and $c_m(G)$ is the minimum number
of co-chordal subgraphs  of $G$ required to cover the $m$-cliques of $G$, then
$\reg\,(S/K_m(G))\leq c_m(G)(m-1)$. One can regard  $\F(\Delta_{m,n})$  as  the $m$-clique ideal of the complement  $G$ of the line graph
of the complete bipartite graph $K_{m,n}$ where $c_m(G)\geq 2$. This shows that  the upper bound of $\reg\,(S/K_m(G))$ given by  Moradi
 can be strict.

Our paper is organized as follows.  In the preliminary  section, we collect the necessary terminology and results from the
literature. In Section $3$, we give the irreducible decomposition of the facet ideal $\F(\Delta_{m,n})$ of a chessboard complex  $\Delta_{m,n}$. In Section $4$,
we give some exact formulas for depth  and  regularity of  powers of  $\F(\Delta_{m,n})$  under the conditions that $n\geq m$ and
$m\leq 2$. In Section $5$, we  provide some lower bounds for depth and regularity of the facet ideal  $\F(\Delta_{m,n})$ with $n\geq m$.   We also prove that these  lower bounds can be attained if  $m=3$.

Throughout the paper, we  assume  that  $x_{ij}$ is the square corresponding to the $i$-th row  and  the $j$-th column in  an $m\times n$ chessboard where $n\geq m$, $1\leq i\leq m$ and $1\leq j\leq n$. By convention,  we use the matrix $(x_{ij})_{m\times n}$ for  representing  this chessboard. For example, we can represent the  chessboard on the left  by the matrix on the right

\

\begin{center}
\begin{tabular}{lr}
\begin{tabular}{|p{1cm}|p{1cm}|p{1cm}|}
\hline
$x_{11}$ & $x_{12}$ & $x_{13}$\\
& & \\
\hline
$x_{21}$ & $x_{22}$ & $x_{23}$\\
& & \\
\hline
$x_{31}$ & $x_{32}$ & $x_{33}$\\
& & \\
\hline
\end{tabular}\qquad\qquad & \qquad$\begin{pmatrix}
x_{11} & x_{12} & x_{13}\\
\\
x_{21} & x_{22} & x_{23}\\
\\
x_{31} & x_{32} & x_{33}\\
\end{pmatrix}$
\end{tabular}
\end{center}

\vspace{1cm}

\section{Preliminaries }

In this section, we gather together the needed  notations and basic facts, which will
be used throughout this paper. However, for more details, we refer the reader to \cite{BH,F,HT1,HTT,J}.

Let $\Delta$ be a simplicial complex on the vertex set $V$. We  denote the set of its facets (maximal faces under inclusion) by $Facets\,(\Delta)$.
If $Facets\,(\Delta)=\{F_1,\ldots,F_q\}$,  we write $\Delta$ as $\Delta=\langle  F_1,\ldots,F_q \rangle $.
A {\it  subcomplex} of $\Delta$ is  a simplicial complex whose
facet set is a subset of the facet set of $\Delta$.
 If $A\subseteq V$,
the {\it   induced subcomplex}  of $\Delta$  over $A$ is the subcomplex   $\Delta|_A=\{F\in \Delta \mid F\subseteq A\}$.

Let $n$ be a positive integer,  we set $[n]=\{1,2,\ldots,n\}$. Given a chessboard complex $\Delta_{m,n}$ with a vertex set $\{x_{ij}\mid i\in [m], j\in [n]\}$, we consider
 the polynomial ring   $S=k[x_{ij}\mid i\in [m], j\in [n]]$ in the entries of a generic matrix $(x_{ij})_{m\times n}$ over a field $k$.
 The  facet ideal of $\Delta_{m,n}$ is the squarefree monomial ideal of $S$ given by
$$
\F(\Delta_{m,n})=(x_{1\ell_1}\cdots x_{m\ell_m}\mid \ell_1,\ldots, \ell_m\in [n] \text{\ are pairwise different}).
$$

\begin{Example} \label{example1} The facet ideal  of  the   chessboard complex $\Delta_{3,3}$ is
$$
\F(\Delta_{3,3})=(x_{11}x_{22}x_{33},x_{11}x_{23}x_{32},x_{12}x_{21}x_{33},x_{12}x_{23}x_{31},x_{13}x_{21}x_{32},x_{13}x_{22}x_{31}).
$$
\end{Example}

\medskip
	Let  $S=k[x_1,\ldots,x_n]$ be a polynomial ring in $n$ variables over a field $k$, $I\subset S$   a nonzero homogeneous ideal and
	$$0\rightarrow \bigoplus\limits_{j}S(-j)^{\beta_{p,j}(I)}\rightarrow \bigoplus\limits_{j}S(-j)^{\beta_{p-1,j}(I)}\rightarrow \cdots\rightarrow \bigoplus\limits_{j}S(-j)^{\beta_{0,j}(I)}\rightarrow I\rightarrow 0$$
	is a {\em minimal graded free resolution} of $I$, and $S(-j)$ is an $S$-module obtained by shifting
	the degrees of $S$ by $j$. The number
	$\beta_{i,j}(I)$, the $(i,j)$-th graded Betti number of $I$, is
	an invariant of $I$ that equals the minimal number of  generators of degree $j$ in the
	$i$th syzygy module of $I$.
	Of particular interest are the following invariants which measure the size of the minimal graded
	free resolution of $I$.
	The {\it projective dimension} of $I$, denoted by $\pd\,(I)$, is defined to be
	$$\mbox{pd}\,(I):=\mbox{max}\,\{i\ |\ \beta_{i,j}(I)\neq 0\}.$$
	The {\it regularity} of $I$, denoted by $\mbox{reg}\,(I)$, is defined by
	$$\mbox{reg}\,(I):=\mbox{max}\,\{j-i\ |\ \beta_{i,j}(I)\neq 0\}.$$

\medskip
By looking at the minimal free resolution and Auslander-Buchsbaum formula (see Theorem 1.3.3 of \cite{BH}), it is easy to obtain the following lemma:
 \begin{Lemma}\label{quotient}{\em (\cite[Lemma 1.3]{HTT})}
Let $I\subset S$ be a  nonzero proper homogeneous ideal. Then
\begin{itemize}
\item[(1)] $\depth\,(S/I)=n-\pd\,(S/I)$,
\item[(2)]$\depth\,(S/I)=\depth\,(I)-1$,
\item[(3)]$\reg\,(S/I)=\reg\,(I)-1$,
\item[(4)]$\pd\,(S/I)=\pd\,(I)+1$.
\end{itemize}
\end{Lemma}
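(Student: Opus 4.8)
The plan is to derive all four identities from two standard inputs: the Auslander--Buchsbaum formula together with $\depth\,S=n$, and a comparison of the minimal graded free resolutions of $I$ and of $S/I$ afforded by the short exact sequence
$$0\rightarrow I\rightarrow S\rightarrow S/I\rightarrow 0.$$
I would treat (4) and (3) first from the resolution comparison, then obtain (1) directly from Auslander--Buchsbaum, and finally deduce (2) by combining the others.

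First I would establish (4) and (3) simultaneously. Let $\cdots\rightarrow F_1\rightarrow F_0\rightarrow I\rightarrow 0$ be a minimal graded free resolution of $I$. Splicing it with the short exact sequence above produces the complex $\cdots\rightarrow F_1\rightarrow F_0\rightarrow S\rightarrow S/I\rightarrow 0$, in which the map $F_0\rightarrow S$ is the composite $F_0\twoheadrightarrow I\hookrightarrow S$. Because $I$ is a proper homogeneous ideal we have $I\subseteq\mm$, so the entries of $F_0\rightarrow S$ all lie in $\mm$; together with the minimality of the original resolution this shows the spliced complex is a minimal graded free resolution of $S/I$. Reading off Betti numbers, $\beta_{0,j}(S/I)$ is nonzero only for $j=0$ (where it equals $1$, coming from the free module $S$ in homological degree $0$), while $\beta_{i,j}(S/I)=\beta_{i-1,j}(I)$ for every $i\geq 1$. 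Comparing the largest homological degree with a nonzero Betti number gives $\pd\,(S/I)=\pd\,(I)+1$, which is (4). For (3), the entry in position $(0,0)$ contributes $0$ to the regularity, while for $i\geq 1$ one has $j-i=(j-(i-1))-1$, so those entries contribute exactly $\reg\,(I)-1$; since the generators of $I$ have degree $\geq 1$ we have $\reg\,(I)\geq 1$, hence $\reg\,(I)-1\geq 0$ and therefore $\reg\,(S/I)=\reg\,(I)-1$.

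Next I would prove (1) as a direct application of the Auslander--Buchsbaum formula (Theorem 1.3.3 of \cite{BH}). Since $S$ is a polynomial ring it is regular, so every finitely generated graded $S$-module has finite projective dimension and $\depth\,S=n$. Applying the formula to $S/I$ yields $\pd\,(S/I)+\depth\,(S/I)=\depth\,S=n$, that is, $\depth\,(S/I)=n-\pd\,(S/I)$, which is (1). Finally, (2) follows by combining the previous identities: applying Auslander--Buchsbaum to $I$ gives $\depth\,(I)=n-\pd\,(I)$, and then, using (4) and (1),
$$\depth\,(I)-1=n-\pd\,(I)-1=n-(\pd\,(I)+1)=n-\pd\,(S/I)=\depth\,(S/I).$$

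The argument is routine once the resolution comparison is set up; the only point requiring care is the minimality of the spliced resolution of $S/I$, which is precisely where the hypothesis that $I$ is a \emph{proper} homogeneous ideal is used, since it guarantees $I\subseteq\mm$ and hence that no unit appears in the augmentation $F_0\rightarrow S$. A secondary subtlety is the inequality $\reg\,(I)\geq 1$ needed in (3), which holds because $I$ is generated in positive degrees, ensuring that the maximum defining $\reg\,(S/I)$ is genuinely attained at $\reg\,(I)-1$ rather than at the $(0,0)$ entry.
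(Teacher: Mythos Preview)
Your proof is correct and follows exactly the approach indicated by the paper, which merely remarks that the lemma follows ``by looking at the minimal free resolution and Auslander--Buchsbaum formula.'' You have supplied the details the paper omits---the splicing of the resolution of $I$ into one for $S/I$, the check of minimality via $I\subseteq\mm$, and the bookkeeping on Betti numbers---but the strategy is identical.
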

Hence we shall work with $\reg\,(I)$ (resp. $\depth\,(I)$) and $\reg\,(S/I)$ (resp. $\depth\,(S/I)$) interchangeably.

\medskip
For a monomial ideal $I$,  $\mathcal{G}(I)$ denotes the unique minimal set
of monomial generators of  $I$.

\begin{Definition} Let $I\subset S$ be a squarefree monomial ideal with irreducible decomposition
$$
I=P_1\cap P_2\cap\cdots \cap P_s,
$$
where $s\geq 1$ and  $P_i$ is a prime ideal of $S$ for any $1\leq i \leq s$.
The {\it Alexander dual} of $I$, denoted by $I^{\,\vee}$, is a squarefree monomial ideal
$$
I^{\,\vee}=({\bf x}_{P_1},{\bf x}_{P_2},\ldots,{\bf x}_{P_s}),
$$
where ${\bf x}_{P_i}=\prod\limits_{x_j\in \mathcal {G}(P_i)}\!\!x_j$ for any $1\leq i \leq s$.
\end{Definition}

\begin{Lemma}{\em (\cite[Theorem 2.18]{H})}\label{ha}
Let $I\subset S$ be a squarefree monomial ideal. Then
$$\pd\,(S/I)=\reg\,(I^{\,\vee}).$$
\end{Lemma}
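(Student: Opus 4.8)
The statement is Terai's formula, and the natural route is through Stanley--Reisner theory combined with Hochster's formulas and combinatorial Alexander duality. First I would reduce to the Stanley--Reisner setting. Every squarefree monomial ideal is of the form $I=\I_\Delta$ for a unique simplicial complex $\Delta$ on $[n]$, so I would begin by checking that the Alexander dual ideal $I^{\,\vee}$ of the definition above coincides with the Stanley--Reisner ideal $\I_{\Delta^{\vee}}$ of the \emph{Alexander dual complex} $\Delta^{\vee}=\{\sigma\subseteq[n]\mid [n]\setminus\sigma\notin\Delta\}$. This is a direct comparison of the minimal prime (facet) decomposition of $\I_\Delta$ with the minimal nonfaces of $\Delta^{\vee}$. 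After this reduction the claim becomes $\pd\,(S/\I_\Delta)=\reg\,(\I_{\Delta^{\vee}})$, and the whole proof amounts to showing that both sides equal one and the same combinatorial maximum taken over reduced homology of links of $\Delta$.

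For the regularity side I would use Lemma~\ref{quotient}(3) to write $\reg\,(\I_{\Delta^{\vee}})=\reg\,(S/\I_{\Delta^{\vee}})+1$ and then apply the homological Hochster formula $\beta_{i,j}(S/\I_\Gamma)=\sum_{|W|=j}\dim_k\tilde H_{j-i-1}(\Gamma|_W;k)$ to $\Gamma=\Delta^{\vee}$. The plan is to rewrite each induced subcomplex $(\Delta^{\vee})|_W$ via the restriction--link identity $(\Delta^{\vee})|_W=(\link_\Delta([n]\setminus W))^{\vee}$ (Alexander dual taken inside the ground set $W$), and then invoke combinatorial Alexander duality $\tilde H_e(\Gamma^{\vee};k)\cong\tilde H_{w-e-3}(\Gamma;k)$ for a complex $\Gamma$ on a $w$-element ground set. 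Over a field these yield $\tilde H_{j-i-1}((\Delta^{\vee})|_W)\cong\tilde H_{i-2}(\link_\Delta([n]\setminus W))$. Setting $\rho=[n]\setminus W$ and $\ell=i-2$, I expect to obtain
\[
\reg\,(\I_{\Delta^{\vee}})=\max\{\,n-|\rho|-\ell-1 \mid \tilde H_\ell(\link_\Delta\rho;k)\neq 0,\ \rho\subseteq[n]\,\}.
\]

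For the projective-dimension side I would pass through local cohomology. By the Auslander--Buchsbaum formula $\pd\,(S/\I_\Delta)=n-\depth\,(S/\I_\Delta)$ and $\depth\,(S/\I_\Delta)=\min\{i\mid \Coh{i}{k[\Delta]}\neq 0\}$, while the local-cohomology Hochster formula $\dim_k \Coh{i}{k[\Delta]}_{-\rho}=\dim_k\tilde H_{i-|\rho|-1}(\link_\Delta\rho;k)$ gives, after setting $\ell=i-|\rho|-1$, that $\depth\,(S/\I_\Delta)=\min\{\ell+|\rho|+1\mid \tilde H_\ell(\link_\Delta\rho)\neq 0\}$. Hence
\[
\pd\,(S/\I_\Delta)=\max\{\,n-|\rho|-\ell-1 \mid \tilde H_\ell(\link_\Delta\rho;k)\neq 0,\ \rho\subseteq[n]\,\},
\]
which is term-for-term the same maximum obtained for $\reg\,(\I_{\Delta^{\vee}})$, so the two invariants coincide.

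The main obstacle is the index bookkeeping: one must quote both Hochster formulas with the correct homological shifts and combine them with the degree-$3$ shift in combinatorial Alexander duality and the $+1$ coming from Lemma~\ref{quotient}(3), so that the two extremal quantities land on the identical expression $\max\{n-|\rho|-\ell-1\}$. A secondary technical point is the careful verification of the restriction--link duality $(\Delta^{\vee})|_W=(\link_\Delta([n]\setminus W))^{\vee}$ and of combinatorial Alexander duality itself (typically via Alexander duality of geometric realizations, or a direct chain-level argument); once these are in hand the equality of the two maxima is immediate. As an alternative that avoids local cohomology, one may keep everything on the induced-subcomplex side by computing $\reg$ through local cohomology of $\Delta^{\vee}$ and using the dual identity $\link_{\Delta^{\vee}}(\rho)=(\Delta|_{[n]\setminus\rho})^{\vee}$, which leads to the same conclusion.
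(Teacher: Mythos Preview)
Your proposal is a correct outline of the standard proof of Terai's formula via Hochster's formulas and combinatorial Alexander duality; the index bookkeeping you describe is exactly what is needed, and the restriction--link identity together with combinatorial Alexander duality does collapse both invariants to the same maximum over reduced link homology.

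However, you should note that the paper does not prove this lemma at all: it is simply quoted as \cite[Theorem 2.18]{H}, i.e., as a known result from the literature. So there is no ``paper's own proof'' to compare against. Your argument is therefore not an alternative to the paper's approach but rather a self-contained derivation of a result the authors take for granted. What your approach buys is that it makes the paper independent of the cited reference for this particular fact; the cost is that it imports substantial machinery (local cohomology, both Hochster formulas, combinatorial Alexander duality) that the paper otherwise does not develop, so in the context of this paper a citation is the appropriate choice.
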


In this paper we shall use the following results.

 \begin{Lemma}
\label{sum1}{\em (\cite[Lemmas 2.2 and 3.2]{HT1})}
Let $S_{1}=k[x_{1},\dots,x_{m}]$, $S_{2}=k[x_{m+1},\dots,x_{n}]$ be two polynomial rings and $S=S_1\otimes_k S_2$, let $I\subset S_{1}$,
$J\subset S_{2}$ be two nonzero homogeneous ideals. Then
\begin{itemize}
\item[(1)] $\depth\,(I+J)=\depth\,(I)+\depth\,(J)-1$,
\item[(2)]$\depth\,(JI)=\depth\,(I)+\depth\,(J)$,
\item[(3)] $\reg\,(I+J)=\reg\,(I)+\reg\,(J)-1$,
\item[(4)]$\reg\,(JI)=\reg\,(I)+\reg\,(J)$.
\end{itemize}
\end{Lemma}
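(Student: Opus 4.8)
The plan is to reduce all four identities to two module isomorphisms over $S=S_1\otimes_k S_2$ combined with the fact that a tensor product of minimal free resolutions is again minimal. Because the variables occurring in $I$ and in $J$ are disjoint, I would first record the $S$-module isomorphisms
$$S/(I+J)\cong (S_1/I)\otimes_k(S_2/J),\qquad IJ\cong I\otimes_k J.$$
The first is the standard computation $(S_1\otimes_k S_2)/(I\otimes_k S_2+S_1\otimes_k J)\cong(S_1/I)\otimes_k(S_2/J)$. For the second, the multiplication map $\mu\colon S_1\otimes_k S_2\to S$ is an isomorphism, and since tensoring over the field $k$ is exact, $I\otimes_k J$ injects into $S$ with image the $k$-span of the products $fg$ ($f\in I$, $g\in J$). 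One checks that this $k$-span is already an $S$-submodule: for a monomial $x=x_1x_2$ with $x_1\in S_1$, $x_2\in S_2$ one has $fg\cdot x=(fx_1)(gx_2)$ with $fx_1\in I$, $gx_2\in J$. Hence that $k$-span equals the product ideal $IJ=JI$, so $\mu$ restricts to the claimed isomorphism. This step, where disjointness of the variable sets is essential, is the only place the hypothesis on $S_1,S_2$ is really used.

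The structural input I would then invoke is the following. If $P_\bullet$ is the minimal graded $S_1$-free resolution of an $S_1$-module $M$ and $Q_\bullet$ the minimal graded $S_2$-free resolution of an $S_2$-module $N$, then $P_\bullet\otimes_k Q_\bullet$ is the minimal graded $S$-free resolution of $M\otimes_k N$. Exactness is the K\"unneth formula over $k$, and minimality holds because every entry of the differential of $P_\bullet$ lies in the maximal ideal $\mathfrak m_{S_1}$ and every entry of $Q_\bullet$ in $\mathfrak m_{S_2}$, so every entry of the total differential lies in $\mathfrak m_S=\mathfrak m_{S_1}S+\mathfrak m_{S_2}S$. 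Reading off graded Betti numbers gives the convolution
$$\beta^S_{i,j}(M\otimes_k N)=\sum_{\substack{a+b=i\\ c+d=j}}\beta^{S_1}_{a,c}(M)\,\beta^{S_2}_{b,d}(N),$$
whence, since both resolutions are nonzero and finite, $\pd_S(M\otimes_k N)=\pd_{S_1}(M)+\pd_{S_2}(N)$ and $\reg_S(M\otimes_k N)=\reg_{S_1}(M)+\reg_{S_2}(N)$.

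Applying this with $(M,N)=(I,J)$ gives at once $\reg(IJ)=\reg(I)+\reg(J)$, which is (4). Applying it with $(M,N)=(S_1/I,S_2/J)$ gives $\reg_S(S/(I+J))=\reg_{S_1}(S_1/I)+\reg_{S_2}(S_2/J)$; feeding this into Lemma \ref{quotient}(3), which rewrites each $\reg$ of an ideal as $\reg$ of the corresponding quotient plus one, yields (3) with its $-1$. For the depth statements I would pass through projective dimension: Auslander--Buchsbaum over the regular rings gives $\depth_{S_1}(I)=m-\pd_{S_1}(I)$ and $\depth_{S_2}(J)=(n-m)-\pd_{S_2}(J)$, while $\depth_S(IJ)=n-\pd_S(IJ)$. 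Substituting the additivity $\pd_S(IJ)=\pd_{S_1}(I)+\pd_{S_2}(J)$ collapses the ambient dimensions and produces $\depth(IJ)=\depth(I)+\depth(J)$, which is (2). The identical computation for $S/(I+J)$ gives $\depth_S(S/(I+J))=\depth_{S_1}(S_1/I)+\depth_{S_2}(S_2/J)$, and Lemma \ref{quotient}(2) then converts this into (1).

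The genuine obstacle is not the arithmetic but the two foundational facts: the isomorphism $IJ\cong I\otimes_k J$ (as opposed to a mere surjection, which is where the disjoint-variable structure must be exploited) and the minimality of the tensored resolution. Once these are established, the four statements follow uniformly from the convolution formula together with Auslander--Buchsbaum and Lemma \ref{quotient}, with no further case analysis.
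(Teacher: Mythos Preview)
Your argument is correct. Note, however, that the paper does not supply its own proof of this lemma: it is quoted verbatim from \cite[Lemmas 2.2 and 3.2]{HT1} and used as a black box. Your approach via the K\"unneth formula for the tensor product of minimal graded free resolutions, together with Auslander--Buchsbaum and Lemma~\ref{quotient}, is exactly the standard route and is essentially how the result is established in the cited source. There is nothing to compare against in the present paper, and no gap in what you wrote.
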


\begin{Lemma}\label{sqm}{\em(\cite[Proposition 4.1]{LM})}
Let $I\subset S$ be a squarefree monomial ideal.
If each element of $\mathcal{G}(I)$ contains at least one variable not dividing any
other element of $\mathcal{G}(I)$. Then
$$reg\,(I)=|\text{supp}\,(I)|-|\mathcal {G}(I)|+1 $$
where  $\text{supp}\,(I)=\{x_i:\,  x_i|u \text{\ for  some \ } u\in \mathcal {G}(I)  \}$.
\end{Lemma}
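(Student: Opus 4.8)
The plan is to establish the two inequalities $\reg(I)\ge |\supp(I)|-|\mathcal G(I)|+1$ and $\reg(I)\le |\supp(I)|-|\mathcal G(I)|+1$ separately. Throughout write $\mathcal G(I)=\{u_1,\dots,u_r\}$, $W=\supp(I)$ and $N=|W|$, and for each $i$ fix a variable $y_i$ dividing $u_i$ but no other generator; such a $y_i$ exists by hypothesis, and the $y_i$ are pairwise distinct. By Lemma \ref{quotient}(3) it is equivalent to prove $\reg(S/I)=N-r$. Since variables outside $W$ only introduce a polynomial factor and change neither side, I may assume $S=k[x:x\in W]$.

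For the lower bound I would use the Scarf complex. Consider $\lcm(u_1,\dots,u_r)=\prod_{x\in W}x$, a squarefree monomial of degree $N$. For any proper subset $A\subsetneq\{1,\dots,r\}$, picking $i\notin A$, the private variable $y_i$ divides no $u_j$ with $j\in A$, so $y_i\nmid \lcm(u_j:j\in A)$ and hence $\lcm(u_j:j\in A)\ne\prod_{x\in W}x$. Thus $\prod_{x\in W}x$ is the lcm of the full index set and of no other subset, so $\{1,\dots,r\}$ is a face of the Scarf complex of $I$. Since the algebraic Scarf complex is a subcomplex of the minimal multigraded free resolution of $S/I$ (Bayer--Peeva--Sturmfels), this forces $\beta_{r,\,\prod_{x\in W}x}(S/I)\ne0$; as the multidegree has total degree $N$, we get $\beta_{r,N}(S/I)\ne0$ and therefore $\reg(S/I)\ge N-r$. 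The same conclusion can be reached with the paper's own tools: by Hochster's formula the nonvanishing $\beta_{r,N}(S/I)\ne0$ amounts to $\tilde{H}_{N-r-1}(\Delta)\ne0$ for the Stanley--Reisner complex $\Delta$ of $I$, and via Alexander duality together with the Nerve Lemma one checks that $\Delta^{\vee}=\langle\,W\setminus\supp(u_i):i\,\rangle$ is homotopy equivalent to the boundary of an $(r-1)$-simplex, giving exactly the required homology.

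For the upper bound I would induct on $r=|\mathcal G(I)|$. The base case $r=1$ is the principal ideal $I=(u_1)$, where $\reg(S/I)=\deg u_1-1=N-1=N-r$. For the inductive step set $I'=(u_1,\dots,u_{r-1})$ and $J=(I':u_r)$, and use the short exact sequence
\[
0 \longrightarrow (S/J)(-d) \xrightarrow{\ \cdot u_r\ } S/I' \longrightarrow S/I \longrightarrow 0, \qquad d=\deg u_r .
\]
Here $J=(v_1,\dots,v_{r-1})$ with $v_i=u_i/\gcd(u_i,u_r)=\mathbf{x}_{\supp(u_i)\setminus\supp(u_r)}$. The decisive observation is that both $I'$ and $J$ again satisfy the hypothesis with exactly $r-1$ generators: each $v_i$ still contains $y_i$ (because $y_i\notin\supp(u_r)$), and since $y_i$ divides $v_i$ but no $v_j$ with $j\ne i$, no $v_i$ can divide another $v_j$, so the $v_i$ form a minimal generating set of size $r-1$. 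Applying the induction hypothesis gives $\reg(S/I')=|\supp(I')|-(r-1)$ and $\reg(S/J)=|\supp(J)|-(r-1)$. The arithmetic that makes everything fit is $|\supp(J)|+d=|\supp(I')\setminus\supp(u_r)|+|\supp(u_r)|=|\supp(I')\cup\supp(u_r)|=N$, while $|\supp(I')|=N-p$ with $p\ge1$ the number of variables private to $u_r$. Feeding these into $\reg(S/I)\le\max\{\reg(S/I'),\ \reg(S/J)+d-1\}$ yields $\reg(S/I)\le\max\{N-p-r+1,\ N-r\}=N-r$.

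Combining the two bounds gives $\reg(S/I)=N-r$, that is $\reg(I)=N-r+1=|\supp(I)|-|\mathcal G(I)|+1$. The main obstacle is the lower bound: one must ensure that the top syzygy, in multidegree $\prod_{x\in W}x$, does not cancel in the minimal resolution, and it is precisely the private-variable hypothesis that guarantees this by making the top lcm unique among all subsets of generators. A secondary point requiring care is the inductive step, where one must verify that passing to the colon ideal $J$ both preserves the private-variable property and loses no generators, so that the induction is genuinely on $r-1$ generators; this is exactly what the non-divisibility of the $v_i$ secures. It also explains why the lower bound is proved independently rather than extracted from the short exact sequence, whose estimate degenerates into a tie precisely when $u_r$ has a single private variable.
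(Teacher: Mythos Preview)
Your proof is correct and self-contained. In the paper, however, this lemma carries no proof at all: it is simply quoted from \cite[Proposition~4.1]{LM} as a known result, so there is no argument to compare against. Your approach---Scarf complex for the lower bound, induction via the colon short exact sequence for the upper bound---is a clean and standard way to establish the formula, and every step checks out. In particular, your verification that the colon ideal $J=(v_1,\dots,v_{r-1})$ again satisfies the private-variable hypothesis (because $y_i\mid v_i$ but $y_i\nmid v_j$ for $j\ne i$) is the key point that keeps the induction running, and your arithmetic $|\supp(J)|+d=|\supp(I')\cup\supp(u_r)|=N$ via inclusion--exclusion is exactly what is needed. The Scarf argument for the lower bound is also correct: uniqueness of the top lcm forces the multigraded strand of $(\text{Taylor}\otimes_S k)$ in multidegree $\prod_{x\in W}x$ to be one-dimensional and concentrated in homological degree $r$, so $\beta_{r,N}(S/I)=1$.
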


\begin{Lemma}
\label{depthreglemma} {\em (\cite[Corollary 2.12]{MMVV})}  Let $I\subset S$ be  a  monomial ideal and $f$  a monomial of degree $d$. Then
$$\reg\,(S/(I,f))\leq\reg\,(S/I)+d-1.$$
\end{Lemma}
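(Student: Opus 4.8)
The plan is to derive the bound from the short exact sequence that links $I$, $(I,f)$ and the colon ideal $(I:f)$, and then to show that colon by $f$ cannot increase regularity. Concretely, I would start from the exact sequence of graded $S$-modules
$$0 \longrightarrow \bigl(S/(I:f)\bigr)(-d) \xrightarrow{\ \cdot f\ } S/I \longrightarrow S/(I,f) \longrightarrow 0,$$
where $\cdot f$ is multiplication by $f$: it is well defined and injective since $gf\in I$ precisely when $g\in (I:f)$, its image is $(I+(f))/I$ so that the cokernel is $S/(I,f)$, and the twist $(-d)$ makes the map homogeneous because $\deg f=d$. Feeding this into the standard regularity estimate $\reg(C)\le\max\{\reg(B),\reg(A)-1\}$ for a short exact sequence $0\to A\to B\to C\to 0$ (immediate from the long exact sequence in $\Tor(-,k)$), and using $\reg\bigl((S/(I:f))(-d)\bigr)=\reg(S/(I:f))+d$, gives
$$\reg(S/(I,f))\le\max\{\reg(S/I),\ \reg(S/(I:f))+d-1\}.$$

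It then suffices to prove $\reg(S/(I:f))\le\reg(S/I)$, because $d\ge 1$ forces the right-hand maximum to equal $\reg(S/I)+d-1$. To obtain this colon bound I would factor $f=x_{i_1}\cdots x_{i_d}$ into (not necessarily distinct) variables and use the identity $(I:f)=\bigl(\cdots((I:x_{i_1}):x_{i_2})\cdots:x_{i_d}\bigr)$, so that the statement reduces, after $d$ applications, to the single-variable case: for every monomial ideal $J$ and every variable $x$, one has $\reg(S/(J:x))\le\reg(S/J)$, i.e. regularity does not increase under colon by a variable.

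This single-variable inequality is the main obstacle, and it cannot be bootstrapped from the same short exact sequence alone: pairing $\reg(S/(J:x))\le\max\{\reg(S/J)-1,\reg(S/(J,x))\}$ with $\reg(S/(J,x))\le\max\{\reg(S/J),\reg(S/(J:x))\}$ only shows that the colon bound and the companion bound $\reg(S/(J,x))\le\reg(S/J)$ are \emph{equivalent}, and supplies no base case. I would therefore establish the single-variable non-increase as an independent structural fact (this is the content of \cite[Corollary 2.12]{MMVV}), for instance by polarizing $J$ to a squarefree monomial ideal — which preserves all graded Betti numbers, hence the regularity — and invoking the combinatorial behaviour of colons of squarefree monomial ideals under deletion and link. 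Once the single-variable inequality is available, the iterated colon identity promotes it to an arbitrary monomial $f$, and the short exact sequence of the first paragraph then yields $\reg(S/(I,f))\le\reg(S/I)+d-1$.
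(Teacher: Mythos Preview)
The paper does not supply a proof of this lemma; it simply quotes the result from \cite[Corollary~2.12]{MMVV}. So there is no argument in the paper to compare your sketch against.

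As to the sketch itself: your outline via the short exact sequence and the reduction of $\reg(S/(I:f))\le\reg(S/I)$ to the single-variable case is correct and standard. The honest acknowledgment that the single-variable inequality $\reg(S/(J:x))\le\reg(S/J)$ cannot be extracted from the exact sequence alone is also right. Where the proposal falls short is precisely at that point: you do not actually prove the single-variable colon bound. Saying ``this is the content of \cite[Corollary~2.12]{MMVV}'' is circular, since that is exactly the result the lemma is citing; and the mention of polarization is only a gesture, not an argument. If you want a self-contained proof, you need to carry out that step --- for instance, after polarizing to a squarefree ideal $J$, one can identify $(J:x)$ and $(J,x)$ with restrictions of the Stanley--Reisner complex (link and deletion with respect to the vertex $x$) and bound their graded Betti numbers via Hochster's formula, or argue directly that the multigraded Betti numbers of $S/(J:x)$ embed, up to a shift by $x$, into those of $S/J$. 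Without that, your proposal is a correct reduction but not a complete proof.
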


 \begin{Lemma}\label{sum}{\em (\cite[Theorem 1.1]{NV})}
Let $S_{1}=k[x_{1},\dots,x_{m}]$, $S_{2}=k[x_{m+1},\dots,x_{n}]$ be two polynomial rings and $S=S_1\otimes_k S_2$, let  $I\subset S_{1}$,
$J\subset S_{2}$ be two nonzero monomial ideals.
 Then for all $t\geq 1$,  there are equalities
\begin{itemize}
\item[(1)]$\reg\,(\frac{S}{(I+J)^t})=\max\limits_{\begin{subarray}{c}
i\in [t-1]\\
j\in [t]\\
\end{subarray}} \{\reg\,(\frac{S_1}{I^{t-i}})+\reg\,(\frac{S_2}{J^{i}})+1,\reg\,(\frac{S_1}{I^{t-j+1}})+\reg\,(\frac{S_2}{J^{j}})\}$,
\item[(2)]$\depth\,(\frac{S}{(I+J)^t})=\min\limits_{\begin{subarray}{c}
i\in [t-1]\\
j\in [t]\\
\end{subarray}} \{\depth\,(\frac{S_1}{I^{t-i}})+\depth\,(\frac{S_2}{J^{i}})+1, \depth\,(\frac{S_1}{I^{t-j+1}})+\depth\,(\frac{S_2}{J^{j}})\}$.
\end{itemize}
In particular, If $t=1$, we obtain Lemma  \ref{sum1} $(1)$ and $(3)$.
\end{Lemma}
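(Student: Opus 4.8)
The plan is to reduce everything to the ideals $I^a+J^b$, whose regularity and depth are already controlled by Lemma \ref{sum1}, and then to resolve $(I+J)^t$ against these by a Mayer--Vietoris induction on $t$, with base case $t=1$ being exactly Lemma \ref{sum1}(1),(3). The starting point is the combinatorial identity
\[
(I+J)^t=\bigcap_{i=1}^{t}\bigl(I^{\,t+1-i}+J^{\,i}\bigr).
\]
Since the variables of $S_1$ and $S_2$ are disjoint, every monomial of $S$ factors uniquely as $m=m_1m_2$ with $m_1\in S_1$, $m_2\in S_2$, and $m\in(I+J)^t$ precisely when $\ord_I(m_1)+\ord_J(m_2)\ge t$, where $\ord_I(m_1)$ is the largest $a$ with $m_1\in I^a$, and similarly for $\ord_J$. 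I would prove the displayed equality by checking that a monomial lies in the right-hand side exactly under this same order condition, which is elementary bookkeeping. Writing $C_i=I^{\,t+1-i}+J^{\,i}$, Lemma \ref{sum1} gives at once $\reg(S/C_i)=\reg(S_1/I^{\,t+1-i})+\reg(S_2/J^{\,i})$ and $\depth(S/C_i)=\depth(S_1/I^{\,t+1-i})+\depth(S_2/J^{\,i})$, and likewise for the pairwise sums $C_i+C_{i'}=I^{\,t+1-\max(i,i')}+J^{\,\min(i,i')}$.

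The engine is the generalized Mayer--Vietoris complex of the family $\{C_i\}_{i=1}^{t}$,
\[
0\to S/{\textstyle\bigcap_i}C_i\to\bigoplus_i S/C_i\to\bigoplus_{i<i'}S/(C_i+C_{i'})\to\cdots,
\]
together with the long exact sequences in $\Tor(-,k)$ (equivalently, its associated spectral sequence) that it produces. The key simplification is that the components form a staircase: $C_i+C_{i'}$ depends only on $\max(i,i')$ and $\min(i,i')$, so every higher sum $C_{i_0}+\cdots+C_{i_r}$ collapses to the sum of its two extreme components, and the only layers carrying new information are the components $S/C_i$, whose two exponents sum to $t+1$, and the \emph{adjacent} sums $S/(C_i+C_{i+1})=S/(I^{\,t-i}+J^{\,i})$, whose exponents sum to $t$. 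A term sitting $r$ steps into the complex incurs a homological shift of $r$: via the short-exact-sequence inequalities one gets $\reg(S/(I+J)^t)\le\max_r\{\reg F^r+r\}$ and $\depth(S/(I+J)^t)\ge\min_r\{\depth F^r+r\}$, where $F^r$ is the $r$-th term. Thus the components (at $r=0$) produce the family $\reg(S_1/I^{\,t+1-j})+\reg(S_2/J^{\,j})$ with $j\in[t]$, while the adjacent sums (one step deeper, $r=1$) produce $\reg(S_1/I^{\,t-i})+\reg(S_2/J^{\,i})+1$ with $i\in[t-1]$, which are exactly the two families in the asserted maximum; dually for the minimum in the depth formula. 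I would organize the passage through these shifted intersections as an induction, proving the slightly more general statement for arbitrary staircase intersections $\bigcap_i(I^{a_i}+J^{b_i})$ so that the recursion closes up.

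The inequalities above are formal, so the substance is \emph{equality}: one must show the extremal Betti numbers predicted by the two families genuinely survive and are not cancelled along the connecting maps. Here I would exploit that $S=S_1\otimes_k S_2$ and that every module in sight is an external product $S/(I^a+J^b)=(S_1/I^a)\otimes_k(S_2/J^b)$, so by the K\"unneth formula $\Tor^S_p(M\otimes_k N,k)=\bigoplus_{a+b=p}\Tor^{S_1}_a(M,k)\otimes_k\Tor^{S_2}_b(N,k)$ the Betti table of each piece is the tensor product of the two factor tables (this already recovers Lemma \ref{sum1}). The internal degrees contributed by the two factors are independent, so a class realizing $\reg(S_1/I^{a})+\reg(S_2/J^{b})$ in one layer occupies a bidegree that cannot be matched, hence not cancelled, by the image of a class from a neighbouring layer whose exponents sum differently. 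Making this non-cancellation precise throughout the staircase complex---pinning down exactly which of the collapsing higher terms could interfere with the type-1 and type-2 extremal classes---is the main obstacle, and it is precisely where disjointness of the variable sets, through K\"unneth, does the decisive work.
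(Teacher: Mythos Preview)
The paper does not prove this lemma at all: it is quoted verbatim as \cite[Theorem 1.1]{NV} and used as a black box, so there is no ``paper's own proof'' to compare against. Your proposal is therefore an attempt to reprove a result of Nguyen--Vu, not a result of the present paper.

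On the substance: your intersection identity $(I+J)^t=\bigcap_{i=1}^t(I^{t+1-i}+J^i)$ is correct for monomial ideals in disjoint variables, and the Mayer--Vietoris/staircase bookkeeping does produce the two families appearing in the statement with the right shifts. However, you yourself flag the real difficulty---showing that the extremal Betti classes are not cancelled along the connecting maps---and you do not actually carry it out. The K\"unneth bidegree argument you sketch is not sufficient as stated: neighbouring layers $S/(I^{t+1-j}+J^j)$ and $S/(I^{t-i}+J^i)$ can share $(S_1,S_2)$-bidegrees (take $i=j$ and an exponent pair where $\reg(S_1/I^{t-i})=\reg(S_1/I^{t+1-i})$, which certainly occurs), so ``exponents sum differently'' does not by itself rule out cancellation. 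This is a genuine gap.

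For comparison, the original proof in \cite{NV} does not use your intersection decomposition at all. It works instead with the filtration coming from $(I+J)^t=\sum_{i}I^{t-i}J^i$ and the short exact sequences whose subquotients are tensor products $(I^a/I^{a+1})\otimes_k(S_2/J^b)$; the Tor-vanishing needed to split the invariants is then read off directly from these tensor factorizations rather than from a Mayer--Vietoris spectral sequence. That approach sidesteps exactly the non-cancellation issue you left open.
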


\begin{Lemma}\label{depthlemma} {\em \cite[Lemmas 1.1 and 1.2]{HTT}} Let\ \ $0\rightarrow A \rightarrow  B \rightarrow  C \rightarrow 0$\ \  be a short exact sequence of finitely generated graded $S$-modules. Then
\begin{itemize}
\item[(1)]$\reg\,(B)\leq \max\,\{\reg\,(A), \reg\,(C)\}$,  the equality holds if $\reg\,(A)-1\neq\reg\,(C)$,
\item[(2)]$\reg\,(C)\leq \max\,\{\reg\,(A)-1, \reg\,(B)\}$, the equality holds if $\reg\,(A)\neq\reg\,(B)$,
\item[(3)]$\depth\,(B)\geq \min\,\{\depth\,(A), \depth\,(C)\}$, the equality holds if $\depth\,(A)-1\neq\depth\,(C)$.
\end{itemize}
\end{Lemma}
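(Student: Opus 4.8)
The plan is to deduce all three statements from the two long exact sequences attached to $0\to A\to B\to C\to 0$: regularity will be read off from graded Betti numbers via the $\Tor$ sequence, and depth from the vanishing of local cohomology via the $\Coh{\bullet}{-}$ sequence. In each case the inequalities come straight from exactness, and the equality conditions come from combining the resulting bounds and isolating one degenerate case.

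For parts (1) and (2) I would use $\reg(M)=\max\{j-i\mid \beta_{i,j}(M)\neq0\}$, where $\beta_{i,j}(M)=\dim_k\Tor_i(M,k)_j$. Applying $-\otimes_S k$ gives, in each internal degree $j$, the exact sequence $\cdots\to \Tor_i(A,k)_j\to \Tor_i(B,k)_j\to \Tor_i(C,k)_j\to \Tor_{i-1}(A,k)_j\to\cdots$. Exactness at the middle, right, and left spots yields respectively $\beta_{i,j}(B)\neq0\Rightarrow\beta_{i,j}(A)\neq0$ or $\beta_{i,j}(C)\neq0$; $\beta_{i,j}(C)\neq0\Rightarrow\beta_{i,j}(B)\neq0$ or $\beta_{i-1,j}(A)\neq0$; and $\beta_{i,j}(A)\neq0\Rightarrow\beta_{i,j}(B)\neq0$ or $\beta_{i+1,j}(C)\neq0$. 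Translating each nonvanishing into a bound on $j-i$ gives the three inequalities $\reg(B)\leq\max\{\reg(A),\reg(C)\}$, $\reg(C)\leq\max\{\reg(A)-1,\reg(B)\}$, and, for free, $\reg(A)\leq\max\{\reg(B),\reg(C)+1\}$. The first two are precisely the bounds asserted in (1) and (2).

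To get the equalities I would feed these three inequalities into one another by a short case analysis. For (1), assuming $\reg(A)-1\neq\reg(C)$, I split into $\reg(A)>\reg(C)$ and $\reg(A)\leq\reg(C)$: in the first case the third inequality forces $\reg(A)\leq\reg(B)$, since the competing alternative $\reg(A)\leq\reg(C)+1$ is ruled out by the hypothesis, and then $\reg(B)=\reg(A)$; in the second case the hypothesis gives $\reg(A)-1<\reg(C)$, so the second inequality forces $\reg(C)\leq\reg(B)$ and hence $\reg(B)=\reg(C)$. Either way the maximum is attained. The equality in (2) is handled symmetrically, with the hypothesis $\reg(A)\neq\reg(B)$ serving to break the tie between $\reg(A)-1$ and $\reg(B)$ in the relevant maximum.

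For (3) I would use $\depth(M)=\min\{i\mid \Coh{i}{M}\neq0\}$ together with the local cohomology sequence $\cdots\to \Coh{i}{A}\to \Coh{i}{B}\to \Coh{i}{C}\to \Coh{i+1}{A}\to\cdots$. For every $i<\min\{\depth(A),\depth(C)\}$ both outer terms vanish, forcing $\Coh{i}{B}=0$, which is exactly $\depth(B)\geq\min\{\depth(A),\depth(C)\}$. For the equality, set $c=\min\{\depth(A),\depth(C)\}$ and inspect the sequence in degree $c$: tracing through the three orderings of $\depth(A),\depth(C)$ shows $\Coh{c}{B}$ is nonzero unless the connecting map $\Coh{c}{C}\to \Coh{c+1}{A}$ swallows the surviving class, and this can happen only when $\depth(A)=\depth(C)+1$. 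Hence the hypothesis $\depth(A)-1\neq\depth(C)$ yields $\depth(B)=c$. The inequalities are immediate; the genuine work, and the main obstacle, is the bookkeeping in the equality claims—one must pin down exactly the single boundary coincidence in each part ($\reg(A)=\reg(C)+1$, $\reg(A)=\reg(B)$, and $\depth(A)=\depth(C)+1$) in which the connecting homomorphism can be an isomorphism and annihilate the extremal contribution, and check that outside that case the extremal class always survives.
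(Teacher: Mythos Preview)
The paper does not actually prove this lemma; it merely quotes it from \cite[Lemmas 1.1 and 1.2]{HTT} and moves on. So there is no ``paper's own proof'' to compare against.

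That said, your argument is the standard and correct one. Reading off the three $\reg$ inequalities from the long exact $\Tor$ sequence and then playing the three inequalities against each other to pin down the equality cases is exactly how such statements are established; your case split in (1) and the symmetric one in (2) are clean. For (3), your local cohomology argument is fine: the only subtlety is that when $\depth(A)>\depth(C)$ you need $\Coh{c+1}{A}=0$ to force surjectivity of $\Coh{c}{B}\to\Coh{c}{C}$, and this is precisely what the hypothesis $\depth(A)\neq\depth(C)+1$ (hence $\depth(A)>c+1$) buys you. In short, nothing is missing; you have supplied a complete proof where the paper simply appeals to the literature.
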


\begin{Corollary}\label{add1}
Let  $I\subset S$ be a  nonzero proper homogenous ideal, and   $f$  a homogenous polynomial of degree $d$ in $S$. Then
\begin{itemize}
\item[(1)]$\reg\,(I)\leq \max\,\{\reg\,(I\!:\!f)+d, \reg\,((I,f))\}$,  the equality holds if $\reg\,((I,f))\neq\reg\,(I\!:\!f)+d-1$,
\item[(2)]$\reg\,(I,f)\leq \max\,\{\reg\,(I\!:\!f)+d-1, \reg\,(I)\}$,  the equality holds if $\reg\,(I)\neq\reg\,(I\!:\!f)+d$,
\item[(3)]$\depth\,(S/I)\geq \min\,\{\depth\,(S/(I\!:\!f)), \depth\,(S/(I,f))\}$, the equality holds if $\depth\,(S/(I\!:\!f))\neq\depth\,(S/(I,f))+1$.
\end{itemize}
\end{Corollary}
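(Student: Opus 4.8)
The plan is to deduce all three statements from the single ``colon'' short exact sequence of graded $S$-modules
$$0\longrightarrow \bigl(S/(I\!:\!f)\bigr)(-d)\xrightarrow{\ \cdot f\ } S/I\longrightarrow S/(I,f)\longrightarrow 0,$$
and then apply Lemma \ref{depthlemma} term by term, translating back and forth with the dictionary of Lemma \ref{quotient}.

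First I would check exactness. Multiplication by $f$ is a degree-$d$ homomorphism $S(-d)\to S$; it descends to a map $\bigl(S/(I\!:\!f)\bigr)(-d)\to S/I$ because $f\cdot(I\!:\!f)\subseteq I$ by the definition of the colon ideal. An element $\bar g$ of the source is killed exactly when $gf\in I$, i.e. when $g\in (I\!:\!f)$, so the induced map is injective; its image is $(I,f)/I$, whence the cokernel is $S/(I,f)$. This gives the displayed sequence. I would record the two facts needed below: $\reg\bigl((S/(I\!:\!f))(-d)\bigr)=\reg\,(S/(I\!:\!f))+d$, while $\depth\bigl((S/(I\!:\!f))(-d)\bigr)=\depth\,(S/(I\!:\!f))$, since a degree shift does not change depth.

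Next I would feed the sequence into Lemma \ref{depthlemma} with $A=(S/(I\!:\!f))(-d)$, $B=S/I$, $C=S/(I,f)$. Part (1) of that lemma yields $\reg\,(S/I)\leq\max\{\reg\,(S/(I\!:\!f))+d,\ \reg\,(S/(I,f))\}$, with equality when $\reg\,(S/(I\!:\!f))+d-1\neq\reg\,(S/(I,f))$; rewriting each $\reg\,(S/J)$ as $\reg\,(J)-1$ via Lemma \ref{quotient} gives statement (1). Part (2) applied to the same sequence gives $\reg\,(S/(I,f))\leq\max\{\reg\,(S/(I\!:\!f))+d-1,\ \reg\,(S/I)\}$ with equality when $\reg\,(S/(I\!:\!f))+d\neq\reg\,(S/I)$, which becomes statement (2) after the same substitution. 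Part (3) gives $\depth\,(S/I)\geq\min\{\depth\,(S/(I\!:\!f)),\ \depth\,(S/(I,f))\}$ with equality when $\depth\,(S/(I\!:\!f))-1\neq\depth\,(S/(I,f))$, i.e. statement (3).

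There is no real obstacle here, as this is a formal consequence of Lemma \ref{depthlemma}; the only points requiring care are the bookkeeping of the shift by $d$ (which affects regularity but not depth) and transcribing the equality hypotheses of Lemma \ref{depthlemma} correctly under the identification $\reg\,(S/J)=\reg\,(J)-1$. In the degenerate case $f\in I$ one has $(I\!:\!f)=S$ and $(I,f)=I$, and all three assertions hold trivially; otherwise every module in the sequence is nonzero and Lemma \ref{depthlemma} applies directly.
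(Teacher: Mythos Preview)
Your proposal is correct and matches the paper's own proof: the paper simply writes down the same short exact sequence $0\to (S/(I\!:\!f))(-d)\xrightarrow{\cdot f} S/I\to S/(I,f)\to 0$ and invokes Lemma~\ref{depthlemma}. Your extra care with exactness, the shift bookkeeping, and the degenerate case $f\in I$ is more detail than the paper provides but entirely in line with its approach.
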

\begin{proof}It follows from  Lemma \ref{depthlemma} by considering the following short exact sequence
$$0\longrightarrow \frac{S}{I:f}(-d) \stackrel{ \cdot f} \longrightarrow \frac{S}{I}\longrightarrow  \frac{S}{(I,f)}\longrightarrow  0.$$
\end{proof}

\begin{Remark}\label{add2}
Let  $I\subset S$ be a  nonzero proper homogenous ideal and $f_i$ a  homogenous polynomial of degree $d_i$ in $S$ for $i=1,\ldots,m$.
Set $J_0=I$, $J_i=I+(f_1,f_2,\ldots, f_i)$ for any  $i\in [m]$.
Applying the  above corollary, one has
$\reg\,(J_{i-1})\leq \max\{\reg\,(J_{i-1}:f_{i})+d_i,\ \reg\,(J_{i})\}$
and the equality holds if $\reg\,(J_{i})\ge \reg\,(J_{i-1}:f_{i})+d_i$.
Furthermore,
$$\reg\,(I)\leq \max\{\reg\,(J_{j-1}:f_{j})+d_j,\  \reg\,(J_{m})\mid j\in [m]\}$$
and $\reg\,(I)=\reg\,(J_{m})$  if  $\reg\,(J_{m})\geq \max\{\reg\,(J_{j-1}:f_{j})+d_j\mid j\in [m]\}$.

Similarly, we also have
$$\depth\,(S/J_{i-1})\geq \min\{\depth\,(S/(J_{i-1}:f_{i})),\ \depth\,(S/J_{i})\}$$
and the equality holds if $\depth\,(S/(J_{i-1}:f_{i}))\leq \depth\,(S/J_{i})$.
Furthermore,
$$\depth(S/I)\geq \min\{\depth(S/(J_{j-1}:f_{j})), \ \depth(S/J_{m})\mid j\in [m]\}$$
and the equality holds if $\depth(S/(I\!:\!f_{1}))\leq \min\{\depth(S/(J_{j-1}\!:\!f_{j})), \depth(S/J_{m})\mid 2\leq j\leq m\}.$
\end{Remark}

Applying the above  remark  iteratively, we can obtain
\begin{Corollary}\label{add3}
Let $I\subset S$ be a monomial ideal and $u_i$ a monomial for  $i=1,\ldots,r$. Set $V=\{u_1,u_2,\ldots,u_r\}$. Then
\begin{itemize}
\item[(1)]$\reg\,(I)\leq \max\{\reg((I:\prod\limits_{u\in W}u)+I_{W^c})+d_W\mid \ W\subseteq V\}$,
\item[(2)]$\depth\,(S/I)\geq \min\{\depth\,(\frac{S}{(I:\prod\limits_{u\in W}u)+I_{W^c}})\mid \  W\subseteq V\}$,
\end{itemize}
where $W^c=V\setminus W$, $I_{W^{c}}$ is an ideal generated by $\{v\mid v\in W^{c}\}$ and $d_W$ is the degree of $\prod\limits_{u\in W}u$.
\end{Corollary}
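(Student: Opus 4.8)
The plan is to prove (1) and (2) simultaneously by induction on $r=|V|$, using Corollary~\ref{add1} as the single splitting step: at each stage one picks one of the monomials, say $u_r$, and splits the current monomial ideal $K$ into $K:u_r$ and $K+(u_r)$ by means of the short exact sequence $0\to (S/(K\!:\!u_r))(-d_r)\to S/K\to S/(K+(u_r))\to 0$ underlying Corollary~\ref{add1}. Reading off the regularity inequality of Corollary~\ref{add1}(1) contributes a degree shift $d_r$ on the colon branch and no shift on the other branch, while the depth inequality of Corollary~\ref{add1}(3) contributes no shift on either branch; this is exactly why the term $+d_W$ appears in (1) and is absent in (2). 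Thus the proof is the organization of these splittings into a binary tree whose leaves are indexed by the subsets $W\subseteq V$ (the monomials that have been "colon-ed out" along the path to the leaf).

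For the base case $r=1$ with $V=\{u_1\}$, the subsets of $V$ are $\emptyset$ and $\{u_1\}$; since $\prod_{u\in\emptyset}u=1$, $I_\emptyset=(0)$ and $d_\emptyset=0$, the right-hand sides reduce to $\max\{\reg(I+(u_1)),\ \reg(I\!:\!u_1)+d_1\}$ and $\min\{\depth(S/(I+(u_1))),\ \depth(S/(I\!:\!u_1))\}$, which are precisely Corollary~\ref{add1}(1) and (3) with $f=u_1$. For the inductive step set $V'=\{u_1,\dots,u_{r-1}\}$ and apply Corollary~\ref{add1} with $f=u_r$ to obtain
$$\reg(I)\le\max\{\reg(I\!:\!u_r)+d_r,\ \reg(I+(u_r))\},\qquad \depth(S/I)\ge\min\{\depth(S/(I\!:\!u_r)),\ \depth(S/(I+(u_r)))\}.$$
Both $I\!:\!u_r$ and $I+(u_r)$ are monomial ideals, so the induction hypothesis applies to each of them together with the monomial set $V'$. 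On the colon branch one uses the identity $(I\!:\!u_r)\!:\!v=I\!:\!(u_rv)$, so that the contribution of a subset $W'\subseteq V'$ is governed by $\bigl(I:\prod_{u\in W'\cup\{u_r\}}u\bigr)+I_{V'\setminus W'}$ with total shift $d_r+d_{W'}$; this is exactly the term of the statement indexed by $W:=W'\cup\{u_r\}$, for which $W^c=V'\setminus W'$ and $d_W=d_r+d_{W'}$. On the branch $I+(u_r)$ one uses $(I+(u_r))\!:\!v=(I\!:\!v)+((u_r)\!:\!v)$ for monomial ideals together with $I_{V'\setminus W'}+(u_r)=I_{V\setminus W'}$, so that the contribution of $W'\subseteq V'$ is governed by $\bigl(I:\prod_{u\in W'}u\bigr)+I_{W'^c}$ with shift $d_{W'}$; here $W:=W'$, so $u_r\in W^c$ and $d_W=d_{W'}$. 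Since every $W\subseteq V$ either contains $u_r$, and then arises from the colon branch with $W'=W\setminus\{u_r\}$, or does not, and then arises from the other branch with $W'=W$, taking the maximum (resp.\ minimum) over the two branches yields exactly the maximum (resp.\ minimum) over all $W\subseteq V$, completing the induction.

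The computations with Corollary~\ref{add1} and the colon identities are routine, and the colon branch matches the claimed terms verbatim for arbitrary monomials. The one place demanding attention is the bookkeeping on the $I+(u_r)$ branch: one must check that the principal ideal $((u_r)\!:\!v)$ produced when colon-ing $I+(u_r)$ by a product $v$ of the remaining monomials is absorbed correctly into $I_{W^c}$. When the $u_i$ are pairwise coprime — in particular when they are variables, which is the setting in which Corollary~\ref{add3} is applied in Sections~4 and 5 — one has $((u_r)\!:\!v)=(u_r)$ and the identification is immediate; in general these principal ideals are tracked through the recursion in the same manner. No idea beyond Corollary~\ref{add1} is required: the content is entirely the reorganization of the iterated splitting into the binary tree indexed by the subsets of $V$.
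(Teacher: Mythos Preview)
Your proof is correct and follows exactly the paper's approach, which consists of the single line ``applying the above remark [Remark~\ref{add2}] iteratively''; you have simply supplied the binary-tree induction that the paper leaves implicit. Your flag on the non-coprime case is apt --- the identification $((I,u_r)\!:\!v)+I_{V'\setminus W'}=(I\!:\!v)+I_{W^c}$ on the plus branch genuinely needs the $u_i$ pairwise coprime, and your phrase ``tracked through the recursion in the same manner'' does not actually close that gap --- but the paper does not address this either, and it is immaterial since Corollary~\ref{add3} is only invoked (in Remark~\ref{depth2} and Theorem~\ref{depth4}) with the $u_i$ being distinct variables.
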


\begin{Remark}\label{add4}
Let  $I$ be a  monomial ideal of $S$ with $\mathcal{G}(I)=\{u_1,u_2,\ldots, u_r\}$, where  $u_i$ is a  monomial  of degree $d_i$ for any $i\in[r]$.
Set $J_0=I$, $J_i=(u_{i+1},u_{i+2},\ldots, u_r)$. Thus we have the following short exact sequence
$$0\longrightarrow \frac{S}{J_{i}:u_i}(-d_i) \stackrel{ \cdot u_i} \longrightarrow \frac{S}{J_i}\longrightarrow  \frac{S}{J_{i-1}}\longrightarrow  0.$$
Therefore,
$\reg\,(J_{i-1})\leq \max\{\reg\,(J_{i}:u_{i})+d_i-1,\ \reg\,(J_{i})\}$ by  Lemma \ref{depthlemma} (2),
and the equality holds if $\reg\,(J_{i}:u_{i})+d_i>\reg\,(J_{i})$.
Furthermore,
$$\reg\,(I)=\reg\,(J_0)\leq \max\{\reg\,(J_{j}:u_{j})+d_j-1,\  \reg\,(J_{r})\mid j\in [r]\}$$
and the equality holds if
$\reg\,(J_{1}:u_{1})+d_1>\max\{\reg\,(J_{j}:u_{j})+d_j-1,\  \reg\,(J_{r})\mid 2\leq j\leq r\}.$
\end{Remark}

\section{Irreducible decomposition of the facet ideal  }

In this section, we  provide  the irreducible decomposition of  the facet ideal $\F(\Delta_{m,n})$ of a chessboard complex  $\Delta_{m,n}$ with $n\geq m$.
We first give a definition of the vertex cover.

\begin{Definition} \label{cover}
Let $\Delta$  be a simplicial complex on the vertex set $V$. A {\it  vertex cover} for $\Delta$  is a subset $A$ of $V$ that intersects
every facet of $\Delta$. If $A$ is a minimal element {\em (}under inclusion{\em)} of the set of vertex covers of $\Delta$, it is called a {\it minimal} vertex cover.
 \end{Definition}

We need the following lemma.
\begin{Lemma} \label{prime}{\em (\cite[Proposition 1.8]{F})} Let  $\Delta$  be a simplicial complex, $\F(\Delta)$ its facet ideal.
Then an ideal $P=(x_{i_1},\ldots, x_{i_s})$ is a minimal prime of $\F(\Delta)$ if and only if $\{x_{i_1},\ldots, x_{i_s}\}$ is a minimal vertex cover of $\Delta$.
\end{Lemma}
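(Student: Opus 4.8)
The plan is to pin down the minimal primary decomposition of $\F(\Delta)$ directly. For a subset $A$ of the vertex set $V$ write $P_A=(x_i\mid x_i\in A)$, a monomial prime of $S$. I would show that $\F(\Delta)=\bigcap_{A}P_A$, the intersection running over the minimal vertex covers $A$ of $\Delta$ in the sense of Definition~\ref{cover}, and that distinct such $P_A$ are incomparable under inclusion. Granting this, the lemma is immediate from the elementary fact that an ideal written as an intersection of pairwise incomparable prime ideals has exactly those primes as its minimal primes, so the content is entirely in identifying the right prime decomposition.

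The inclusion $\F(\Delta)\subseteq\bigcap_A P_A$ needs only divisibility: a generator $x_F=\prod_{x_i\in F}x_i$ of $\F(\Delta)$ belongs to $P_A$ exactly when some $x_i\in A$ divides $x_F$, i.e. when $A\cap F\neq\emptyset$. Since every vertex cover meets every facet $F$, each $x_F$ lies in every $P_A$, hence $\F(\Delta)\subseteq\bigcap_A P_A$.

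The crux is the reverse inclusion. As $\bigcap_A P_A$ is a monomial ideal it suffices to take a monomial $u\in\bigcap_A P_A$ and check $u\in\F(\Delta)$, i.e. that $\supp(u)$ contains some facet of $\Delta$. Suppose not; then every facet meets $V\setminus\supp(u)$, so $V\setminus\supp(u)$ is a vertex cover of $\Delta$ and therefore contains some minimal vertex cover $A_0$. But then $A_0\cap\supp(u)=\emptyset$, so no variable of $A_0$ divides $u$, giving $u\notin P_{A_0}$ and contradicting $u\in\bigcap_A P_A$. Thus $\F(\Delta)=\bigcap_A P_A$.

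Finally I would record the deduction. Distinct minimal vertex covers are incomparable, so the $P_A$ are pairwise incomparable primes. For any minimal prime $Q$ of $\F(\Delta)$ we have $\prod_A P_A\subseteq\bigcap_A P_A=\F(\Delta)\subseteq Q$, so primeness of $Q$ forces $P_A\subseteq Q$ for some $A$; since $P_A\supseteq\F(\Delta)$ and $Q$ is minimal over $\F(\Delta)$, we get $Q=P_A$. Conversely each $P_A\supseteq\F(\Delta)$ contains a minimal prime of $\F(\Delta)$, which by the previous sentence equals some $P_{A'}$ with $P_{A'}\subseteq P_A$; incomparability forces $A'=A$, so $P_A$ is itself a minimal prime. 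Hence the minimal primes of $\F(\Delta)$ are exactly the $P_A$ with $A$ a minimal vertex cover, which is the assertion. The only step that takes a moment's thought is the reverse inclusion above, namely the implication "$\supp(u)$ contains no facet $\Rightarrow V\setminus\supp(u)$ is a vertex cover"; everything else is bookkeeping. (Alternatively one may quote the standard fact that every minimal prime of a squarefree monomial ideal is generated by variables and then only verify $\F(\Delta)\subseteq P_A\iff A$ is a vertex cover, which shortens the argument.)
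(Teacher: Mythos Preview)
Your argument is correct and complete. The paper itself does not supply a proof of this lemma; it simply quotes the result as \cite[Proposition~1.8]{F}, so there is no in-paper proof to compare against. Your direct verification of the irredundant primary decomposition $\F(\Delta)=\bigcap_A P_A$ over minimal vertex covers $A$, together with the standard deduction of the minimal primes, is exactly the expected argument and matches the content of Faridi's original proof.
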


 Now, we describe the irreducible decomposition of  $\F(\Delta_{m,n})$.
\begin{Theorem} \label{decomposition}
Let $\Delta_{m,n}$ be a chessboard complex with $n\geq m\geq 1$.
Let $V$ be its vertex set and  $\F(\Delta_{m,n})$ be its facet ideal.
 A  prime ideal $P$  of $S$ is a minimal prime of  $\F(\Delta_{m,n})$   if and only if $P$ is  generated by a set $C$,  which is obtained from $V$ by deleting all elements in $s$ rows and all elements in  $m-1-s$ columns for some $0\leq s\leq m-1$.
\end{Theorem}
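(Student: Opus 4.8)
By Lemma~\ref{prime}, it suffices to identify the minimal vertex covers of $\Delta_{m,n}$. A subset $A\subseteq V$ is a vertex cover precisely when its complement $V\setminus A$ contains no facet, i.e.\ the induced subcomplex $\Delta_{m,n}|_{V\setminus A}$ has no $m$-element admissible rook configuration — equivalently, the bipartite subgraph of $K_{m,n}$ on the surviving squares has no perfect matching on the row side. So the plan is to translate the problem entirely into a statement about bipartite matchings and apply (the easy direction of) the K\H{o}nig–Hall theory.

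First I would set up the bookkeeping. A facet of $\Delta_{m,n}$ is a transversal $\{x_{1\ell_1},\dots,x_{m\ell_m}\}$ with the $\ell_i$ pairwise distinct, so it is exactly a matching saturating all $m$ rows in the bipartite graph $G$ with parts $R=[m]$ (rows) and $C=[n]$ (columns) and all $mn$ edges present. A set $A$ is a vertex cover of $\Delta_{m,n}$ iff after deleting the squares in $A$ no such row-saturating matching survives. By Hall's theorem applied to the remaining graph, this happens iff there is a subset $T\subseteq[m]$ of rows whose neighborhood (the set of columns $j$ such that $x_{ij}\notin A$ for some $i\in T$) has size $<|T|$; writing $|T|=s+1$, this says there are $s+1$ rows and a set of $n-(m-1-s)$ "bad" columns... — more cleanly: there exist $s$ rows $R_0$ and $m-1-s$ columns $C_0$, for some $0\le s\le m-1$, such that every square not in $(R_0\times[n])\cup([m]\times C_0)$ lies in $A$. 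I would prove this equivalence carefully: the "if" direction is immediate (those $s$ rows together with the restriction to $n-(m-1-s)$ columns cannot support $m$ rooks), and for the "only if" direction I would take a minimal Hall-violating set $T$ of rows in the surviving graph, note each row of $T$ then sees at most $|T|-1$ surviving columns, and choose $R_0$ and $C_0$ accordingly, checking that $|R_0|+|C_0|=m-1$ can always be arranged (a Hall violator with $|T|=s+1$ forces at least $m-1-s$ fully-deleted columns among those its rows could reach, after possibly shrinking).

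Next I would pass from arbitrary vertex covers to minimal ones. Given the description above, the covers of the stated form are the sets $C=V\setminus\bigl((R_0\times[n])\cup([m]\times C_0)\bigr)$ with $|R_0|=s$, $|C_0|=m-1-s$; I must show (i) each such $C$ is a vertex cover, (ii) each is minimal, and (iii) every minimal vertex cover has this form. Part (i) is the easy Hall direction just discussed. For (iii), start from any minimal cover $A$; by the equivalence it contains some $C$ of the stated form, and since $C$ is itself a cover by (i), minimality forces $A=C$. For (ii), minimality of $C$: removing any single square $x_{ij}\in C$ must create a facet missing the rest of $C$; explicitly, with $x_{ij}$ back in play one has $s+1$ "live" rows ($R_0\cup\{i\}$, using that $i\notin R_0$ and $j\notin C_0$) and these can be completed to a full set of $m$ rooks using the $m-1-s$ deleted columns $C_0$ for the remaining rows $[m]\setminus(R_0\cup\{i\})$ — this is a concrete rook placement I would exhibit, and it avoids all of $C\setminus\{x_{ij}\}$ by construction. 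I would also note two degenerate cases ($s=0$: delete only columns; $s=m-1$: delete only rows) are covered uniformly.

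The main obstacle is the "only if" half of the Hall-type equivalence, namely showing that a Hall violator can always be normalized so that the deficiency is realized by a clean rectangular pattern "$s$ full rows plus $m-1-s$ full columns" with $s+(m-1-s)=m-1$ exactly — a priori a minimal cover could fail to respect the row/column structure, and one has to rule that out. I expect to handle it by taking a Hall violator $T$ that is minimal, analyzing the surviving-neighbor structure of $T$ versus $[m]\setminus T$, and arguing that maximality/minimality of the cover forces every column outside a size-$(|T|-1)$ set to be entirely deleted and every row outside $T$ to be entirely live; a short extremal argument (counting, or an exchange argument on a would-be facet) should close it. The remaining verifications are routine rook-placement checks.
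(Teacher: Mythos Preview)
Your proposal is correct, and the ``main obstacle'' you flag is easier than you fear. Once Hall hands you a violating set $T\subseteq[m]$ with $|N(T)|<|T|$ in the surviving bipartite graph, you know $A$ contains every square $x_{ij}$ with $i\in T$ and $j\notin N(T)$. Now simply set $R_0=[m]\setminus T$ (so $s=m-|T|$) and take $C_0$ to be \emph{any} set of $|T|-1$ columns containing $N(T)$; this is possible since $|N(T)|\le |T|-1\le m-1\le n$. The corresponding $C$ then sits inside $A$, and your step~(iii) finishes. No extremal or exchange argument is needed --- padding $N(T)$ up to size $|T|-1$ is all it takes.

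Compared to the paper: the paper argues directly, without Hall. It checks that each $C$ is a vertex cover by a pigeonhole count on the rows and columns of a putative facet, and that each $C$ is minimal by exhibiting, for any $x_{pq}\in C$, an explicit facet meeting $C$ only at $x_{pq}$ --- essentially the same rook placement you sketch in~(ii). Your bipartite-matching framework buys you something the paper's write-up does not make explicit: the ``only if'' direction, namely that \emph{every} minimal vertex cover is of the stated rectangular form. In your setup this drops out immediately from the containment $C\subseteq A$ together with minimality of $A$, whereas the paper only verifies that each listed $C$ is a minimal cover and does not spell out why no other minimal covers exist.
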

\begin{proof}The case $m=1$  is trivial. Now we assume that  $m\geq 2$. It is enough to show that the set $C$ is exactly a minimal vertex cover of $\Delta_{m,n}$ by Lemma  \ref{prime}.

First, we show that the set $C$ is a  vertex cover of $\Delta_{m,n}$.

For any $F\in Facets(\Delta_{m,n})$, we know that  $|F|=m$ and all elements in $F$ are taken from different rows and columns.
We may write $F$ as $F=\{x_{1t_1},\ldots,x_{mt_m}\}$, where $t_{1},\ldots,t_{m}\in [n]$ are different.
Now, we  show that there exists some $x_{kt_k}\in F$ such that $x_{kt_k}\in C$, i.e., $F\cap C\neq \emptyset$. Assume that the set $C$ is obtained from $V$ by deleting all elements in  $i_1,\ldots,i_{s}$ rows and all elements in $j_1,\ldots,j_{m-1-s}$ columns, where $\{i_1,\ldots,i_{s}\}\subsetneq [m]$ and $\{j_1,\ldots,j_{m-1-s}\}\subsetneq [n]$.
By convention, $\{i_1,\ldots,i_{s}\}=\emptyset$ if $s=0$, and $\{j_1,\ldots,j_{m-1-s}\}=\emptyset$ if $s=m-1$.
Note that there exist $m-s$ elements in $F$, which are not in the $i_1,\ldots,i_{s}$ rows, and at least one of such $m-s$ elements is not in the  $j_1,\ldots,j_{m-1-s}$ columns since $m-s>m-1-s$. Say
$x_{kt_k}$. Hence $F\cap C\neq \emptyset$, as desired.

Next, we show that the vertex cover $C$ is  minimal.

Let $C$ be a set  obtained from $V$ by deleting all elements in  $i_1,\ldots,i_{s}$ rows and all elements in $j_1,\ldots,j_{m-1-s}$ columns, where $i_1,\ldots,i_{s}\in[m]$ and $j_1,\ldots,j_{m-1-s}\in [n]$ are different. Let  $W$ be any proper subset of $C$. Then there exists some $x_{pq}\in C\setminus W$.  Hence
$p\in [m]\setminus \{i_1,\ldots,i_{s}\}$, $q\in [n]\setminus \{j_1,\ldots,j_{m-1-s}\}$. Choose $(m-1-s)$ different elements $k_1,\ldots,k_{m-1-s}\in [m]\setminus \{p,i_1,\ldots,i_{s}\}$ and $s$ different  elements
 $\ell_1,\ldots,\ell_{s}\in [n]\setminus \{q,j_1,\ldots,j_{m-1-s}\}$. Set $F=\{x_{i_1\ell_1},\ldots,x_{i_s\ell_s},x_{k_1j_1},\ldots,x_{k_{m-1-s}j_{m-1-s}},
x_{pq}\}$.  Note that $|F|=m$ and all elements in $F$ are taken from different rows and columns, it follows that $F\in Facets(\Delta_{m,n})$ and $C\cap\{x_{i_1\ell_1},\ldots,x_{i_s\ell_s},x_{k_1j_1},\ldots,x_{k_{m-1-s}j_{m-1-s}}\}=\emptyset$. By the choice of $x_{pq}$, one has
 $F\cap W=\emptyset$. This implies that $W$  is not a  vertex cover of $\Delta_{m,n}$.
\end{proof}

\medskip
Given an ideal $I\subset S$, we set
 $$\bight\,(I)=\sup\{\height\,(P)\mid  P \text{ is a   minimal prime  ideal  of } S \text{ over }  I\}.$$
As a consequence of the above theorem, we have

 \begin{Corollary}\label{dimension}Let $\Delta_{m,n}$ be  a chessboard complex with $n\geq m\geq 1$, one has
\begin{enumerate}
 \item[(a)] $\height\,(\F(\Delta_{m,n}))=n$,
 \item[(b)] $\dim\,(S/\F(\Delta_{m,n}))=(m-1)n$,
 \item[(c)] $\bight\,(\F(\Delta_{m,n}))=\left
\{\begin{array}{l@{\  \ }l}
\lfloor\frac{(n+1)^2}{4}\rfloor&\text{if   $n<2m-1$,}\\
{(n-m+1)m}&\text{otherwise.}
\end{array}\right.$
\end{enumerate}
 \end{Corollary}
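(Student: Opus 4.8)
\textbf{Proof proposal for Corollary \ref{dimension}.}

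The plan is to derive all three items directly from the description of the minimal primes given in Theorem \ref{decomposition}. Recall that every minimal prime $P$ of $\F(\Delta_{m,n})$ is generated by the set $C = C_{s}$ obtained by deleting the elements lying in some $s$ rows and some $m-1-s$ columns, for a choice of $0 \le s \le m-1$. If we delete $s$ rows and $m-1-s$ columns (with the $m-1-s$ deleted columns meeting the remaining $m-s$ rows in $(m-s)(m-1-s)$ further squares already not counted), a direct count gives
\[
|C_{s}| \;=\; mn - sn - (m-1-s)m + s(m-1-s),
\]
since $sn$ squares lie in the deleted rows, $(m-1-s)m$ squares lie in the deleted columns, and $s(m-1-s)$ squares were subtracted twice. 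Simplifying, $|C_{s}| = (m-s)(n-(m-1-s))$; in particular the extreme cases $s=0$ and $s=m-1$ give $|C_{0}| = m(n-m+1)$ and $|C_{m-1}| = n$. For part (a), I would observe that $\height(\F(\Delta_{m,n})) = \min_{0\le s\le m-1} |C_{s}|$, and show this minimum is $n$, attained at $s = m-1$; this is the routine step of minimizing the quadratic-in-$s$ expression $(m-s)(n-m+1+s)$ over integers $0\le s\le m-1$, where one checks the endpoint $s=m-1$ beats all interior values because $n\ge m$ forces $n-m+1+s \ge s+1$ while $m-s$ shrinks. For part (b), $\dim(S/\F(\Delta_{m,n})) = \dim S - \height(\F(\Delta_{m,n})) = mn - n = (m-1)n$, using that $S$ is a polynomial ring in $mn$ variables and $\F(\Delta_{m,n})$ is unmixed-free-of-the-relevant-complication in the sense that height is computed as the min of the heights of its minimal primes.

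For part (c), the point is instead to \emph{maximize} $|C_{s}| = (m-s)(n-m+1+s)$ over $0 \le s \le m-1$. Writing $t = m-s$, so $1 \le t \le m$, this is $f(t) = t(n-t+1)$, a downward parabola in $t$ with vertex at $t = (n+1)/2$. If $(n+1)/2 \le m$, i.e. $n \le 2m-1$, the unconstrained optimum is admissible (take $t = \lfloor (n+1)/2\rfloor$ or $\lceil (n+1)/2 \rceil$), and the maximum value is $\lfloor (n+1)^2/4 \rfloor$ — here I'd note the standard identity $\max_{t\in\ZZ} t(n+1-t) = \lfloor (n+1)^2/4\rfloor$. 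The boundary case $n = 2m-1$ needs a one-line check: then $(n+1)/2 = m$ is itself admissible and $m(n+1-m) = m\cdot m = m^2 = (n-m+1)m$, which agrees with the second branch, so the two formulas match on the overlap and it does not matter which branch one assigns to $n = 2m-1$. If instead $n \ge 2m-1$ (so $n \ge 2m$ strictly, or the boundary), $f$ is increasing on $1 \le t \le m$, so the maximum is at $t = m$, i.e. $s = 0$, giving $|C_{0}| = m(n-m+1)$, which is the second branch.

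The only genuine obstacle is bookkeeping: making sure the count $|C_{s}| = (m-s)(n-m+1+s)$ is correct (inclusion–exclusion on deleted rows and columns, not forgetting the corrections at intersections) and that the integer optimization of the parabola $t(n+1-t)$ is handled cleanly, including the parity cases hidden in the floor function and the agreement of the two branches at $n = 2m-1$. I would present the count once, cleanly, then treat the minimization (for (a), hence (b)) and the maximization (for (c)) as two short monotonicity/vertex arguments. No deeper input beyond Theorem \ref{decomposition} and the elementary fact that the height of a squarefree monomial ideal equals the minimal size of a minimal prime (equivalently, of a minimal vertex cover) is needed.
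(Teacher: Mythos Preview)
Your proposal is correct and follows essentially the same approach as the paper: both derive the height formula $|C_s|=(m-s)(n-m+1+s)$ from Theorem~\ref{decomposition} and then optimize this quadratic, with (b) following immediately from (a) and $\dim S=mn$. The only cosmetic difference is that the paper completes the square in $s$ to write $\height(P)=-[s+\tfrac{n-(2m-1)}{2}]^2+\tfrac{(n+1)^2}{4}$, whereas you substitute $t=m-s$ and locate the vertex of $t(n+1-t)$; these are the same computation, and your treatment of the boundary case $n=2m-1$ is in fact slightly more careful than the paper's.
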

  \begin{proof}
(a)  follows from Theorem \ref{decomposition}.

 (b) follows from a fact that $\height (\F(\Delta_{m,n}))+\dim (S/\F(\Delta_{m,n}))=\dim (S)=mn$.

(c) The case  $m=1$ is trivial.
Now suppose $m\geq 2$.  Let $P$ be a minimal prime of  $\F(\Delta_{m,n})$, then  its generators  have  forms as the above theorem. It follows that
$$\height\,(P)=[n-(m-1-s)](m-s)=-[s+\frac{n-(2m-1)}{2}]^2+\frac{(n+1)^2}{4}$$
for some nonnegative integer $s\leq m-1$. Therefore,
$$
\bight\,(\F(\Delta_{m,n}))=\sup\{-[s+\frac{n-(2m-1)}{2}]^2+\frac{(n+1)^2}{4}\mid \,0\leq s\leq m-1\}.
$$
Let $f(s)=-[s+\frac{n-(2m-1)}{2}]^2+\frac{(n+1)^2}{4}$, then $f(s)$ is a quadratic function in $s$. Hence, if $n\geq 2m-1$, then   $$\bight\,(\F(\Delta_{m,n}))=f(0)=-[\frac{n-(2m-1)}{2}]^2+\frac{(n+1)^2}{4}=(n-m+1)m;$$
if $n<2m-1$, then
\begin{eqnarray*}\bight\,(\F(\Delta_{m,n}))&=&f(\lfloor\frac{(2m-1)-n}{2}\rfloor)\\
&=&-[\lfloor\frac{(2m-1)-n}{2}\rfloor+\frac{n-(2m-1)}{2}]^2+\frac{(n+1)^2}{4}
=\lfloor\frac{(n+1)^2}{4}\rfloor,\end{eqnarray*}
 where the last equality holds because $\bight\,(\F(\Delta_{m,n}))=\frac{(n+1)^2}{4}$ if $n$ is odd, otherwise,  $\bight\,(\F(\Delta_{m,n}))=\frac{(n+1)^2}{4}-\frac{1}{4}$. The proof is complete.
\end{proof}

\section{ Powers of  facet ideals of  small chessboard complexes }
In this section, we  give some formulas for depth  and  regularity of  powers of the facet ideal $\F(\Delta_{m,n})$ of a chessboard complex  $\Delta_{m,n}$ under the condition that $n\ge m\geq 1$ and $m\leq 2$.

Let $G=(V(G),E(G))$ be a finite simple graph with the vertex set $V(G)$ and edge set $E(G)$. We say that a graph $H$ is an {\em induced subgraph} of $G$ on a subset $V'$ of $V(G)$ if $V(H)=V'$ and $E(H)=\{uv\in E(G) : u,v \in V'\}$. The {\it complement} of $G$ is the graph $G^{c}$ with $V(G^{c})=V(G)$ and $E(G^{c})=\{uv :u,v \in  V(G) \text{\ and\ } uv \notin E(G)\}$.  Let $u\in  V(G)$, the degree of $u$, denoted by $\deg\,(u)$, is the number of  vertices of $G$ adjacent to $u$.
Let $G=(V(G),E(G))$ be a connected graph with  $n=|V (G)|$ vertices, it is called an $n$-cycle if $\deg\,(u)=2$ for any $v\in V(G)$,
denoted by $C_n$, where $n$ is referred to as the length of the cycle.

 A  bipartite graph $G$ is a graph whose  vertex set $V(G)$ is the disjoint union $X\sqcup Y$ of $X$ and $Y$ and edge set $E(G)$ satisfies $E(G)\subseteq \{xy\mid x\in X, y\in Y\}$.
The bipartite complement of a graph $H$ is the bipartite graph
$H^{bc}$ with $V(H^{bc})=X\sqcup Y$ and $E(H^{bc})=\{xy: x\in X, y\in Y \text{\ and\ } xy\notin E(H)\}$.

\begin{Lemma}\label{three}{\em (\cite[Theorem 3.6]{AB})}
Let $G$ be a connected bipartite graph and  $I(G)$ its edge ideal. If $\reg\,(I(G))=3$, then
$\reg\,(I(G)^t)=2t+1$ for any $t\geq 1$.
 \end{Lemma}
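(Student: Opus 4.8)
My plan is to prove the two inequalities $\reg(I(G)^{t})\ge 2t+1$ and $\reg(I(G)^{t})\le 2t+1$ separately, the case $t=1$ being the hypothesis. For the lower bound I would first pin down the induced matching number $\nu(G)$. Katzman's bound gives $\nu(G)+1\le\reg(I(G))=3$, so $\nu(G)\le 2$; and if $\nu(G)\le 1$ then $G$ has no induced $2K_{2}$, so --- $G$ being connected and bipartite --- its complement $\overline G$ has no induced cycle of length $\ge 4$ (an induced $C_{4}$ in $\overline G$ is an induced $2K_{2}$ in $G$, an induced $C_{5}$ is impossible in a bipartite graph, and an induced $C_{k}$ with $k\ge 6$ forces a triangle in $G$), hence $\overline G$ is chordal and Fr\"oberg's theorem gives $\reg(I(G))=2$, a contradiction. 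So $\nu(G)=2$, and the Beyarslan--H\`a--Trung lower bound $\reg(I(G)^{t})\ge 2t+\nu(G)-1$ yields $\reg(I(G)^{t})\ge 2t+1$ for all $t\ge 1$.

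For the upper bound I would induct on $t$, using Banerjee's inductive inequality for edge ideals,
$$\reg\bigl(I(G)^{s+1}\bigr)\ \le\ \max\Bigl(\bigl\{\,\reg\bigl(I(G)^{s+1}:m\bigr)+2 \ \bigm|\ m\in\mathcal{G}\bigl(I(G)^{s}\bigr)\,\bigr\}\cup\bigl\{\reg\bigl(I(G)^{s}\bigr)\bigr\}\Bigr).$$
By the inductive hypothesis the last term is $\le 2s+1\le 2(s+1)-1$, so it suffices to show $\reg\bigl(I(G)^{s+1}:m\bigr)\le 3$ for every minimal monomial generator $m$ of $I(G)^{s}$; granting this one gets $\reg(I(G)^{s+1})\le\max\{5,\,2s+1\}=2(s+1)-1$, which closes the induction and, together with the lower bound, proves the lemma.

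Everything therefore reduces to controlling the colon ideals, and this is where I expect the real work to be. By Banerjee's structural analysis one may take $m=e_{1}\cdots e_{s}$ a product of edges of $G$, and then $\bigl(I(G)^{s+1}:m\bigr)$ is a squarefree monomial ideal generated in degrees $\le 2$: it equals $\bigl(x_{i}\mid i\in A\bigr)+I(H)$, where $A$ is the set of vertices even-connected to themselves with respect to $m$ and $H$ is the ``even-connection graph'' on $V(G)\setminus A$, obtained from $G|_{V(G)\setminus A}$ by adjoining every pair of vertices even-connected with respect to $m$; Lemma \ref{sum1} then gives $\reg\bigl(I(G)^{s+1}:m\bigr)=\reg(I(H))$. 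So the problem becomes the purely combinatorial statement that \emph{every even-connection graph $H$ of a connected bipartite graph $G$ with $\nu(G)=2$ satisfies $\reg(I(H))\le 3$}. The hard part will be precisely this, the obstruction being that even-connection adds edges inside a single colour class of $G$, so $H$ need not be bipartite and the argument used above for the lower bound breaks down. My plan here is to (i) give a structural description of the connected bipartite graphs with induced matching number $2$ --- they should be assembled from a short list of elementary pieces glued along at most two vertices; (ii) check piece by piece that an edge created by an even walk can neither enlarge an induced matching beyond size $2$ nor introduce a configuration forcing regularity above $3$; and (iii) conclude $\reg(I(H))\le 3$ from a known criterion, such as Woodroofe's co-chordal cover bound or the bound $\reg\le 3$ for gap-free claw-free graphs, handling the gluings with the short exact sequence and splitting results recorded in Lemma \ref{depthlemma}, Lemma \ref{sqm} and Corollary \ref{add3}.
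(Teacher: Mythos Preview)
The paper does not prove this lemma at all: it is simply quoted from \cite[Theorem~3.6]{AB} and used as a black box. So there is no ``paper's proof'' to compare against; what you have sketched is in fact an outline of the Alilooee--Banerjee argument itself.

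Your framework is exactly the right one and matches theirs: the lower bound via $\nu(G)=2$ together with the Beyarslan--H\`a--Trung inequality, and the upper bound via Banerjee's recursive bound reducing to $\reg\bigl(I(G)^{s+1}:m\bigr)\le 3$. Your argument that $\nu(G)=2$ is clean and correct.

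The gap is precisely where you say it is. Steps (i)--(iii) are a wish list, not a proof, and step (i) in particular --- a structural classification of connected bipartite graphs with induced matching number $2$ --- is not how \cite{AB} proceeds and would likely be unwieldy. What Alilooee and Banerjee actually do is exploit bipartiteness directly: with bipartition $X\sqcup Y$, every even walk has both endpoints in the same colour class, so the even-connection graph $H$ is obtained from $G$ by adding edges only inside $X$ or inside $Y$. They then argue, using the hypothesis $\reg(I(G))=3$ and the Fern\'andez-Ramos--Gimenez characterisation of bipartite regularity-$3$ edge ideals (no induced $C_{\ell}$, $\ell\ge 6$, in the bipartite complement), that any induced subgraph of $H$ that would force $\reg(I(H))\ge 4$ already produces a forbidden configuration in $G$. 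This is a careful case analysis, but it is organised around forbidden subgraphs of $H$, not around a global classification of $G$. If you want to fill the gap yourself rather than cite, that is the route to take.
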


\begin{Theorem}\label{reg}
Let $\Delta_{1,n}$ be a chessboard complex with $n\geq 1$, then for any $t\geq 1$,
\begin{enumerate}
 \item[(a)] $S/\F(\Delta_{1,n})^t$ is Cohen-Macaulay of dimension $0$.
 \item[(b)] $\reg\,(S/\F(\Delta_{1,n})^t)=t-1$.
\end{enumerate}
In particular, $\F(\Delta_{1,n})^t$ has a linear resolution. \end{Theorem}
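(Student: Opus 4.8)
The plan is to observe that $\F(\Delta_{1,n})$ is an exceptionally simple ideal and to exploit this directly. For $m=1$ the facet ideal is just
$$\F(\Delta_{1,n})=(x_{11},x_{12},\ldots,x_{1n}),$$
since each facet of $\Delta_{1,n}$ consists of a single square in the unique row. Thus $\F(\Delta_{1,n})$ is the graded maximal ideal $\mm$ of the polynomial ring $S=k[x_{11},\ldots,x_{1n}]$ in $n$ variables. So everything reduces to computing the depth and regularity of powers of $\mm$.

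For part (a): since $\F(\Delta_{1,n})=\mm$, we have $S/\F(\Delta_{1,n})^t=S/\mm^t$, which is an Artinian local ring, hence of Krull dimension $0$, and every Artinian ring is Cohen--Macaulay; alternatively one invokes Corollary \ref{dimension}(b) with $m=1$ to get $\dim(S/\F(\Delta_{1,n}))=0$, whence $\dim(S/\F(\Delta_{1,n})^t)=0$ as well, and depth $0$ = dimension $0$ gives Cohen--Macaulayness. For part (b), the minimal free resolution of $\mm^t$ is the (truncation of the) Eilenberg--Zilber/Koszul-type resolution; more simply, $\mm^t$ is generated in degree $t$ and has a linear resolution because $\mm^t$ has linear quotients (order the degree-$t$ monomials so that the colon ideal at each step is generated by a subset of the variables), and an ideal generated in a single degree $t$ with a linear resolution has $\reg\,(\mm^t)=t$. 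Then Lemma \ref{quotient}(3) gives $\reg\,(S/\F(\Delta_{1,n})^t)=\reg\,(\mm^t)-1=t-1$, and the "in particular" clause is immediate from the linear-quotients observation. One could equally cite that $\mm^t$ is a polymatroidal ideal, or simply note $\F(\Delta_{1,n})$ is a complete intersection of linear forms so the Rees algebra is a polynomial ring and $\reg$ of powers is computed by the $d$-linearity.

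I do not expect any genuine obstacle here: the entire content is the identification $\F(\Delta_{1,n})=\mm$, after which both statements are standard facts about powers of the maximal ideal. If the authors prefer to keep the exposition uniform with later (harder) cases, an alternative is to present it via the bipartite-graph machinery: the complement of the line graph of $K_{1,n}$ is the complete graph $K_n$, but for $m=1$ this over-complicates things, so the direct argument above is cleanest. The only point requiring a sentence of care is making explicit why a single-degree ideal with a linear resolution has regularity equal to that degree, which is immediate from the definition $\reg\,(I)=\max\{j-i\mid \beta_{i,j}(I)\neq 0\}$ together with linearity forcing $j=t+i$.
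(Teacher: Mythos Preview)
Your proposal is correct and follows essentially the same approach as the paper: both identify $\F(\Delta_{1,n})$ with the graded maximal ideal $\mm$ of $S$ and then appeal to standard facts about $\mm^t$. The only cosmetic difference is in part (b), where the paper invokes Lemma~\ref{sum} (the Nguyen--Vu formula for powers of sums, applied to the complete intersection $\mm=(x_{11})+\cdots+(x_{1n})$) rather than your linear-quotients argument---though you already list the complete-intersection route among your alternatives.
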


 \begin{proof}  (a)  is trivial,  since $\F(\Delta_{1,n})=(x_{11},\ldots,x_{1n})$ is the maximal graded ideal of $S$.
(b) follows from Lemmas \ref{quotient} and  \ref{sum},  since $\F(\Delta_{1,n})=(x_{11},\ldots,x_{1n})$ is a complete intersection ideal.
 \end{proof}

\medskip
 Let $I$ be a  squarefree monomial ideal in a polynomial ring $k[x_1,\ldots, x_n]$. The {\it Stanley-Reisner complex}  $\delta(I)$ of $I$ is a simplicial  complex on the vertex set $\{x_1,\ldots, x_n\}$, where
$\{x_{i_1},\ldots, x_{i_s}\}$ is a face of $\delta(I)$ if and only if $x_{i_1}\cdots x_{i_s}\notin I$.

Let $\Delta_{2,n}$ be a chessboard complex with $n\geq 3$, then its facet ideal $\F(\Delta_{2,n})=(x_{1i}x_{2j}\mid 1\leq i,j\leq n \text{\ and }i\neq j)$ can be regarded as the edge ideal of a connected bipartite graph  with bipartition $\{x_{11},\ldots,x_{1n}\}\sqcup \{x_{21},\ldots,x_{2n}\}$, and the  Stanley-Reisner complex $\delta(\F(\Delta_{2,n}))$ of  $\F(\Delta_{2,n})$ is
$$
\delta(\F(\Delta_{2,n})=\big\langle\, \{x_{11},\ldots,x_{1n}\},\{x_{21},\ldots,x_{2n}\},\{x_{11},x_{21}\},\{x_{12},x_{22}\},\ldots,\{x_{1n},x_{2n}\}\,\big\rangle
$$
from Theorem  \ref{decomposition} and \cite[Remark 1.13]{F}.
By \cite[Lemma 3.1]{T}, we have

\begin{Lemma}\label{power} Let  $\Delta_{2,n}$ be a chessboard complex with $n\geq 3$. Then
 $\depth\,(S/\F(\Delta_{2,n})^t)\\ =1$ if and only  if $\Delta_{\alpha}(\F(\Delta_{2,n})^t)=\langle F_1,F_2\rangle$  for  some $\alpha=(\alpha_1,\ldots,\alpha_{2n})\in \Nb^{2n}$, $F_1=\{x_{11},\ldots,x_{1n}\}$,  $F_2=\{x_{21},\ldots,x_{2n}\}$  and $\Delta_{\alpha}(\F(\Delta_{2,n})^t)=\big\langle\, F\in Facets\,(\delta(\F(\Delta_{2,n}))) \mid \sum\limits_{i\notin F}\alpha_i\leq t-1\,\big\rangle$.
 Moreover,  if $t=\min\,\{k\mid \depth\,(S/\F(\Delta_{2,n})^{k})=1\}$, then such $\alpha$ must satisfy
$\sum\limits_{i\notin F_1}\alpha_i=\sum\limits_{i\notin F_2}\alpha_i=t-1.$
\end{Lemma}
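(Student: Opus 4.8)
The plan is to obtain the statement by feeding the structure of $\delta(\F(\Delta_{2,n}))$ into the Takayama‑type description of $\depth$ provided by \cite[Lemma 3.1]{T}, reducing everything to an elementary count on the at most $n+2$ facets of that complex, and then to handle the ``moreover'' clause by a minimality argument. Throughout the proof I write $\F=\F(\Delta_{2,n})$, and I use that $\delta(\F)$ has facets $F_1=\{x_{11},\dots,x_{1n}\}$, $F_2=\{x_{21},\dots,x_{2n}\}$ and $R_i=\{x_{1i},x_{2i}\}$ for $i\in[n]$, as recorded just before the lemma. Since $\F$ is the edge ideal of a connected bipartite graph, its ordinary and symbolic powers coincide, so each $\F^t=\F^{(t)}$ has only minimal associated primes; as $\dim(S/\F)=n>0$ by Corollary \ref{dimension}, $\mm$ is not among them, hence $\Coh{0}{S/\F^t}=0$ and $\depth(S/\F^t)\ge1$ for every $t$, while $\depth(S/\F)\ge2$ because $\delta(\F)$ is connected (the faces $F_1,R_i,F_2$ lie in one component). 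Consequently $\depth(S/\F^t)=1$ is equivalent to $\Coh{1}{S/\F^t}\neq0$, and by \cite[Lemma 3.1]{T} the latter holds iff there is $\alpha\in\Nb^{2n}$ for which the complex $\Delta_\alpha(\F^t)=\big\langle F\in Facets\,(\delta(\F))\mid\sum_{i\notin F}\alpha_i\le t-1\big\rangle$ is disconnected. So the first assertion reduces to the purely combinatorial claim: for $\alpha\in\Nb^{2n}$, the complex $\Delta_\alpha(\F^t)$ is disconnected if and only if $\Delta_\alpha(\F^t)=\langle F_1,F_2\rangle$.

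To prove this I would write $\alpha=(a_1,\dots,a_n,b_1,\dots,b_n)$ and $|\alpha|=\sum_i a_i+\sum_i b_i$, so that $F_1\in\Delta_\alpha(\F^t)\iff\sum_i b_i\le t-1$, $F_2\in\Delta_\alpha(\F^t)\iff\sum_i a_i\le t-1$ and $R_i\in\Delta_\alpha(\F^t)\iff|\alpha|-a_i-b_i\le t-1$. The direction ``$\Leftarrow$'' is clear since $F_1\cap F_2=\emptyset$. For ``$\Rightarrow$'', suppose $\Delta_\alpha(\F^t)$ is disconnected and distinguish cases by which of $F_1,F_2$ are faces. If both are, then no $R_i$ can be a face, since $x_{1i}\in F_1$ and $x_{2i}\in F_2$ would put $F_1,R_i,F_2$ in one component; hence $\Delta_\alpha(\F^t)=\langle F_1,F_2\rangle$. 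If exactly one of them, say $F_1$, is a face, then every other facet present is some $R_i$ meeting $F_1$ in $x_{1i}$, so $\Delta_\alpha(\F^t)$ is connected, a contradiction. If neither is a face, then $\sum_i a_i\ge t$ and $\sum_i b_i\ge t$, so $|\alpha|\ge2t$; the facets of $\Delta_\alpha(\F^t)$ are then pairwise disjoint rungs, and disconnectedness forces two of them, $R_i$ and $R_j$ with $i\ne j$. Then $a_i+b_i\ge|\alpha|-t+1$ and $a_j+b_j\ge|\alpha|-t+1$, and since these concern disjoint coordinates $|\alpha|\ge(a_i+b_i)+(a_j+b_j)\ge2(|\alpha|-t+1)$, giving $|\alpha|\le2(t-1)<2t$, a contradiction. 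So $\Delta_\alpha(\F^t)=\langle F_1,F_2\rangle$ in every case.

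For the ``moreover'' clause I would set $t=\min\{k\mid\depth(S/\F^k)=1\}$; since $\depth(S/\F)\ge2$ we have $t\ge2$. Take $\alpha=(a_1,\dots,a_n,b_1,\dots,b_n)\in\Nb^{2n}$ with $\Delta_\alpha(\F^t)=\langle F_1,F_2\rangle$, so $\sum_i a_i\le t-1$, $\sum_i b_i\le t-1$ and $|\alpha|-a_i-b_i\ge t$ for all $i$. Assume for contradiction that $\sum_i b_i<t-1$ (the case $\sum_i a_i<t-1$ is symmetric). Put $\alpha'=\alpha$ if $\sum_i a_i\le t-2$, and otherwise, when $\sum_i a_i=t-1$, obtain $\alpha'$ from $\alpha$ by lowering the $x_{1i_0}$‑coordinate by $1$ for some $i_0$ with $a_{i_0}\ge1$. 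In either case $\alpha'\in\Nb^{2n}$, $\sum_i a'_i\le t-2$, $\sum_i b'_i\le t-2$, and $|\alpha'|-a'_i-b'_i\ge t-1$ for every $i$; hence $\Delta_{\alpha'}(\F^{t-1})=\langle F_1,F_2\rangle$, which by the reduction above gives $\depth(S/\F^{t-1})=1$, contradicting the minimality of $t$. Therefore $\sum_i a_i=\sum_i b_i=t-1$, i.e.\ $\sum_{i\notin F_1}\alpha_i=\sum_{i\notin F_2}\alpha_i=t-1$.

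The genuinely delicate step is the reduction at the end of the first paragraph: one must know that the degree vectors may be restricted to $\alpha\in\Nb^{2n}$ and that $\Delta_\alpha(\F^t)$ is given by the displayed formula. Both are precisely what \cite[Lemma 3.1]{T} supplies once one uses that $\F(\Delta_{2,n})$ is the edge ideal of a connected bipartite graph, so that symbolic and ordinary powers agree and $\depth(S/\F^t)\ge1$ holds for all $t$. Everything after that is elementary counting on the facets $F_1,F_2,R_1,\dots,R_n$.
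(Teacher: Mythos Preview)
Your proposal is correct and follows precisely the route the paper indicates: the paper offers no proof of its own beyond the sentence ``By \cite[Lemma 3.1]{T}, we have'' preceding the lemma, so it treats the statement as a direct specialization of Trung's criterion to the facet structure of $\delta(\F(\Delta_{2,n}))$ recorded just above. You have correctly unpacked that specialization---the reduction of $\depth=1$ to disconnectedness of $\Delta_\alpha$, the case analysis showing disconnectedness forces $\Delta_\alpha=\langle F_1,F_2\rangle$, and the minimality argument for the ``moreover'' clause---so there is nothing to add.
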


\begin{Theorem}\label{depth}
Let $\Delta_{2,n}$ be a chessboard complex with $n\geq 2$, then for any $t\geq 1$, we have
\begin{enumerate}
 \item[(a)] $\reg\,(S/\F(\Delta_{2,n})^t)=2t$.
 \item[(b)] $\depth\,(S/\F(\Delta_{2,n})^t)=1$ or $2$. Moreover, $\depth\,(S/\F(\Delta_{2,n})^t)=1$ if and only  if  $n=3$ and
     $t\geq 4$, or $n\geq 4$ and  $t\geq 3$.
 \item[(c)] $S/\F(\Delta_{2,2}^t)$ is Cohen-Macaulay of dimension $2$.
\end{enumerate}
\end{Theorem}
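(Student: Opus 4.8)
The plan is to treat each statement separately, and to reduce everything to known facts about the bipartite graph $G$ whose edge ideal is $\F(\Delta_{2,n})$, together with the combinatorial description of $\delta(\F(\Delta_{2,n}))$ recorded just before Lemma \ref{power}. For part (a), I would first handle the base case $t=1$. Since $\F(\Delta_{2,n})$ is the edge ideal of a connected bipartite graph (the bipartite complement of a perfect matching on $\{x_{1i}\}\sqcup\{x_{2j}\}$), I would compute $\reg\,(\F(\Delta_{2,n}))=3$ directly: the Alexander-dual description from Theorem \ref{decomposition} gives the minimal primes explicitly (delete one row, or delete one column, or delete $\lfloor\frac{n+1}{2}\rfloor$-type mixed sets — but for $m=2$ only $s=0,1$ occur, so the minimal primes are $(x_{11},\dots,x_{1n})$, $(x_{21},\dots,x_{2n})$ and the $\binom{n}{?}$... actually the mixed ones with $s=0$: delete $1$ column, giving a generator $x_{1j}x_{2j}$). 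Hence $\F(\Delta_{2,n})^{\vee}$ has generators $x_{11}\cdots x_{1n}$, $x_{21}\cdots x_{2n}$, and $x_{1j}x_{2j}$ for $j\in[n]$; each big generator contains a variable ($x_{1j}$ or $x_{2j}$ after accounting, or rather we can apply Lemma \ref{ha} plus a direct computation) — more cleanly, I would simply show $\reg\,(S/\F(\Delta_{2,n}))=2$ using Corollary \ref{add3} or the Stanley–Reisner complex, which is a path-like complex, and then invoke Lemma \ref{three}: since $G$ is connected bipartite with $\reg\,(I(G))=3$, we get $\reg\,(I(G)^t)=2t+1$, i.e. $\reg\,(S/\F(\Delta_{2,n})^t)=2t$ for all $t\ge1$. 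The small case $n=2$ (where $\F(\Delta_{2,2})=(x_{11}x_{22},x_{12}x_{21})$ is a complete intersection of two quadrics) I would check by hand or via Lemma \ref{sum}, getting $\reg=2t$ as well, consistent with (c).

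For part (c), the ideal $\F(\Delta_{2,2})=(x_{11}x_{22},\,x_{12}x_{21})$ lives in $S=k[x_{11},x_{12},x_{21},x_{22}]$ and is a complete intersection generated by two degree-$2$ forms in disjoint variable sets; writing $S=S_1\otimes_k S_2$ with $S_i$ two copies of a bivariate ring and using Lemma \ref{sum1}(2) (or Lemma \ref{sum}(2)), $\depth\,(S/\F(\Delta_{2,2})^t)=2$ for all $t$, while $\dim\,(S/\F(\Delta_{2,2}))=2$ by Corollary \ref{dimension}(b); hence Cohen–Macaulay of dimension $2$. This also pins down that $\depth$ equals $2$ in the excluded regime of (b).

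Part (b) is the real work. The upper bound $\depth\le2$ follows since $\F(\Delta_{2,n})$ is not the whole maximal ideal and $S/\F(\Delta_{2,n})^t$ has $\{x_{11},\dots,x_{1n}\}$ and $\{x_{21},\dots,x_{2n}\}$ as facets of $\delta$, so one checks $\depth\ge1$ always and, using that $\delta(\F(\Delta_{2,n}))$ is connected but has these two "big" facets meeting the small facets, $\depth\le 2$; the lower bound $\depth\ge1$ is automatic for a nonzero proper ideal. The dichotomy — when is the depth exactly $1$? — I would settle via Lemma \ref{power}: $\depth\,(S/\F(\Delta_{2,n})^t)=1$ iff there is $\alpha\in\Nb^{2n}$ with $\Delta_\alpha(\F(\Delta_{2,n})^t)=\langle F_1,F_2\rangle$, and at the threshold $t$ one needs $\sum_{i\notin F_1}\alpha_i=\sum_{i\notin F_2}\alpha_i=t-1$; concretely $\sum_{j=1}^n \alpha_{x_{2j}}=t-1$ and $\sum_{j=1}^n\alpha_{x_{1j}}=t-1$, while every "small" facet $\{x_{1j},x_{2j}\}$ must be excluded, i.e. $\sum_{i\ne x_{1j},x_{2j}}\alpha_i\ge t$, equivalently $(t-1-\alpha_{x_{1j}})+(t-1-\alpha_{x_{2j}})\ge t$, i.e. $\alpha_{x_{1j}}+\alpha_{x_{2j}}\le t-2$ for every $j$. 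Summing over $j$: $2(t-1)=\sum_j(\alpha_{x_{1j}}+\alpha_{x_{2j}})\le n(t-2)$, which forces $n(t-2)\ge 2(t-1)$, i.e. $(n-2)t\ge 2n-2$, i.e. $t\ge \frac{2n-2}{n-2}=2+\frac{2}{n-2}$. For $n=3$ this reads $t\ge4$; for $n\ge4$ it reads $t\ge 2+\frac{2}{n-2}$, and since $\frac{2}{n-2}\le1$ with equality impossible for $n>4$ and giving $t\ge3$ at $n=4$, we get $t\ge3$; for $n\ge5$ the bound is $t\ge 3$ after rounding up as well. Conversely, for those $t$ I would exhibit an explicit $\alpha$ (e.g. spread the weight $t-1$ as evenly as possible over the $x_{1j}$'s and independently over the $x_{2j}$'s so that each pairwise sum $\le t-2$), and verify via Lemma \ref{power} that $\Delta_\alpha=\langle F_1,F_2\rangle$, hence $\depth=1$. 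Then for the remaining $(n,t)$ — namely $n=3,t\le3$ and $n\ge4,t\le2$, and all $n$ with $t=1$ — the inequality fails, so no such $\alpha$ exists, hence $\depth\ge2$, and combined with the upper bound $\depth=2$.

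The main obstacle I anticipate is the converse direction in (b): producing, for each admissible $(n,t)$, an explicit weight vector $\alpha$ that simultaneously (i) keeps both big facets $F_1,F_2$ in $\Delta_\alpha$, (ii) deletes every small facet, and (iii) deletes nothing else — the last point requires knowing $\delta(\F(\Delta_{2,n}))$ has \emph{only} the facets $F_1$, $F_2$, and $\{x_{1j},x_{2j}\}$, which is exactly the displayed formula before Lemma \ref{power}, so (iii) is free. Thus the real content is the counting inequality above together with a clean choice of $\alpha$ realizing equality-type configurations; I expect balancing the $\alpha_{x_{1j}}$ near $\frac{t-1}{n}$ does the job, with a short case check for small $n$.
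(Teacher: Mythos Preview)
Your counting argument for (b) via Lemma~\ref{power} is correct and more uniform than the paper's: summing the constraints $\alpha_{x_{1j}}+\alpha_{x_{2j}}\le t-2$ against $\sum_j\alpha_{x_{1j}}=\sum_j\alpha_{x_{2j}}=t-1$ yields the threshold $t\ge 2+\tfrac{2}{n-2}$ for all $n\ge3$ at once, whereas the paper treats $n=3$ by quoting a result on the $6$-cycle from \cite{T} and handles $n\ge4$ by exhibiting a single $\alpha$ supported on $\{x_{11},x_{12},x_{23},x_{24}\}$ together with a direct nonexistence check at $t=2$. For part~(a) both you and the paper reduce to $\reg(\F(\Delta_{2,n}))=3$ and then invoke Lemma~\ref{three}; the paper does this in one line via the Fern\'andez-Ramos--Gimenez criterion on the bipartite complement of $G$ (it contains an induced $C_4$ and no induced cycle of length $\ge6$), which is tidier than the Alexander-dual route you sketch.

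There is, however, a genuine gap. The assertion that $\depth(S/I^t)\ge1$ is ``automatic for a nonzero proper ideal'' is false in general: powers of monomial ideals can acquire the maximal ideal as an associated prime and hence have depth zero. The paper secures both bounds $1\le\depth\le2$ from the fact that edge ideals of bipartite graphs are normally torsion-free \cite{SVV}, so $\Ass(\F(\Delta_{2,n})^t)=\Min(\F(\Delta_{2,n}))$ for every $t$: the maximal ideal is then never associated (giving $\depth\ge1$), and $\depth\le\min_{P\in\Ass}\dim(S/P)=2$ follows from the explicit minimal primes in Theorem~\ref{decomposition} (the ones of height $2n-2$ obtained by deleting a single column). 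Your upper-bound sketch via the facet structure of $\delta$ is too vague and should be replaced by this argument.
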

\begin{proof}
(a)  $\F(\Delta_{2,n})$ can be regarded as the edge ideal of a connected bipartite graph $G$ with bipartition $\{x_{11},\ldots,x_{1n}\}\sqcup \{x_{21},\ldots,x_{2n}\}$. The bipartite complement $G^{bc}$ of $G$ contains
an induced cycle $C_4$ whose edges are $x_{11}x_{12},x_{11}x_{21},x_{21}x_{22},x_{12}x_{22}$
and  does not contain  the induced circle of length $\geq 6$, since $x_{1i}x_{2i}\in E(G^{c})$ for any $1\leq i\leq n$.
Hence $\reg\,(\F(\Delta_{2,n}))=3$ by \cite[Theorem 3.1]{FG}, the desired result follows from Lemmas \ref{three} and \ref{quotient}.

(b) $\F(\Delta_{2,n})$ is normally torsion free and  $\min\{\depth\,(S/\F(\Delta_{2,n})^t)\mid t\geq 1\}=1$ by  \cite[Theorem 5.9]{SVV} and \cite[Theorem  4.4]{T}, since  $\F(\Delta_{2,n})$  is the edge ideal of a connected bipartite graph. By \cite[Proposition 1.2.13]{BH} and Theorem \ref{decomposition}, we obtain
\begin{eqnarray*}
 \depth(S/\F(\Delta_{2,n})^t)&\leq& \min\{\dim(S/P): P\in  \Ass(\F(\Delta_{2,n})^t)\}\\
 &=& \min\{\dim(S/P): P\in  \Ass(\F(\Delta_{2,n}))\}=2.
 \end{eqnarray*}
Therefore, $\depth\,(S/\F(\Delta_{2,n})^t)=1$ or $2$.

If $n=2$, then  $\F(\Delta_{2,2})=(x_{11}x_{22},x_{12}x_{21})$ is a  complete intersection ideal. By Lemma \ref{sum}, we have $\depth\,(S/\F(\Delta_{2,2})^t)=2$ for any $t\geq 1$;
If $n=3$, then $\F(\Delta_{2,n})$  is the edge ideal of a cycle graph of length  $6$.  It follows that  $\depth\,(S/\F(\Delta_{2,n})^t)=1$ if and only  if $t\geq 4$  by \cite[Lemma 5.3]{T}.

Now suppose  $n\geq 4$. Choose $\alpha=(\alpha_1,\ldots,\alpha_{2n})\in \Nb^{2n}$, where $$\alpha_i=\left
\{\begin{array}{l@{\  \ }l}
{1}&\text{if $i=1,2,n+3,n+4$}\\
{0}&\text{otherwise},\\
\end{array}\right.$$
one has   $\Delta_{\alpha}(\F(\Delta_{2,n})^t)=\langle F_1,F_2\rangle$ for all $t\geq 3$, where $F_1=\{x_{11},\ldots,x_{1n}\}$ and $F_2=\{x_{21},\ldots,x_{2n}\}$.

If   $\Delta_{\beta}(\F(\Delta_{2,n})^2)=\langle F_1,F_2\rangle$ for some  $\beta=(\beta_1,\ldots,\beta_{2n})\in \Nb^{2n}$, then
$\sum\limits_{i=1}^{n}\beta_i\leq 1$ because of  $F_2\in \Delta_{\beta}(\F(\Delta_{2,n})^2)$. Similarly, $\sum\limits_{i=n+1}^{2n}\beta_i\leq 1$.
Therefore, there exists at most one $\beta_i=1$ for  $1\leq i\leq n$. Thus $\{x_{1i},x_{2i}\}\in \Delta_{\beta}(\F(\Delta_{2,n})^2)$,  contradicting with the supposition $\Delta_{\beta}(\F(\Delta_{2,n})^2)=\langle F_1,F_2\rangle$.
It follows  from Lemma \ref{power}  that $\depth\,(S/\F(\Delta_{2,n})^t)=1$ if and only  if    $t\geq 3$.

(c) Since $\F(\Delta_{2,2})=(x_{11}x_{22},x_{12}x_{21})$ can be regarded as the edge ideal of a bipartite graph,
and its symbolic powers are consistent with ordinary powers.
Hence $\F(\Delta_{2,2})^t=(x_{11}x_{22},x_{12}x_{21})^t=(x_{11},x_{12})^t\cap (x_{21},x_{22})^t\cap(x_{11},x_{21})^t\cap (x_{21},x_{22})^t$ for any $t\geq 1$. It follows that $\dim\,(S/\F(\Delta_{2,2})^t)=2$. The result follows from (b).
\end{proof}

\section{\text{The facet ideal $\F(\Delta_{m,n})$ with $m\geq 3$}}
In this section, we  provide some lower bounds for   depth  and  regularity of the facet ideal $\F(\Delta_{m,n})$ of a chessboard complex  $\Delta_{m,n}$ under the condition that $n\ge m\ge 3$,
  we also prove that these  lower bounds can be attained if  $m=3$.

Let $\Delta$ be a simplicial complex on the vertex set $V$, $Facets\,(\Delta)$
 the set  of its facets. A subset $M$ of $Facets\,(\Delta)$ is called a {\it matching} of $\Delta$ if the facets in $M$ are pairwise disjoint.
Moreover, if $Facets\,(\Delta|_A)=M$ for $A=\cup_{F\in M}F$, then $M$ is called an {\it induced matching} of $\Delta$.
\begin{Lemma}\label{matching}{\em (\cite[Corollary 3.5]{EF} and \cite[Corollary 3.9]{MV})}
 Let $\Delta$ be a simplicial complex with facet ideal $\F(\Delta)$.
 Then
$$
\reg\,(S/\F(\Delta))\geq \Big\{|\bigcup\limits_{i=1}^{k}F_i|-k\mid \{F_1,\ldots,F_k\} \text{\  is an induced matching in } \Delta \Big\}.
$$
\end{Lemma}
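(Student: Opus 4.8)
The plan is to establish, for \emph{each} induced matching $\{F_1,\dots,F_k\}$ of $\Delta$, the inequality $\reg\,(S/\F(\Delta))\ge \big|\bigcup_{i=1}^k F_i\big|-k$; the asserted bound is then the supremum of these quantities over all induced matchings. So I would fix an induced matching $\{F_1,\dots,F_k\}$, put $A=\bigcup_{i=1}^k F_i$ and $r=|A|$, and aim to show $\reg\,(S/\F(\Delta))\ge r-k$.

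First I would record the combinatorics. Since $\{F_1,\dots,F_k\}$ is a matching, the $F_i$ are pairwise disjoint facets of $\Delta$ with $\sum_{i=1}^k|F_i|=r$; since it is \emph{induced}, $Facets\,(\Delta|_A)=\{F_1,\dots,F_k\}$. Moreover these are exactly the facets of $\Delta$ that happen to lie inside $A$: if $F$ is a facet of $\Delta$ with $F\subseteq A$, then $F$ is a face of $\Delta|_A$, hence $F\subseteq F_j$ for some $j$, and as $F_j\in\Delta$ while $F$ is maximal in $\Delta$ we get $F=F_j$. Writing $S_A=k[x_{ij}\mid x_{ij}\in A]$, it follows that $\F(\Delta|_A)=(\xb_{F_1},\dots,\xb_{F_k})$ is precisely the subideal of $\F(\Delta)$ spanned by those minimal generators whose support lies in $A$; equivalently $\F(\Delta|_A)=\F(\Delta)\cap S_A$, since for a monomial ideal the intersection with a coordinate subring is generated by the minimal generators supported there. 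Because the $F_i$ are pairwise disjoint and cover $A$, each generator of $\F(\Delta|_A)$ involves variables dividing no other generator, so Lemma \ref{sqm} gives $\reg\,(\F(\Delta|_A))=r-k+1$, hence $\reg\,(S_A/\F(\Delta|_A))=r-k$ by Lemma \ref{quotient}(3). (One may alternatively read this off the Koszul resolution of the complete intersection $(\xb_{F_1},\dots,\xb_{F_k})$, whose top graded Betti number sits in the squarefree multidegree $A$.)

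The essential step — and the content supplied by the cited corollaries — is the monotonicity $\reg\,(S/\F(\Delta))\ge \reg\,(S_A/(\F(\Delta)\cap S_A))$: restricting a squarefree monomial ideal to a subset of the variables, equivalently passing to an induced subcomplex, never raises the regularity of the quotient. I would derive it via Hochster's formula. Let $\Gamma=\delta(\F(\Delta))$ be the Stanley--Reisner complex, so $\F(\Delta)=\I_{\Gamma}$. A direct check shows $\Gamma|_A=\delta(\F(\Delta)\cap S_A)=\delta(\F(\Delta|_A))$: a subset $B\subseteq A$ is a face of either side precisely when $\xb_B\notin\F(\Delta)$. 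Since $\beta_{i,W}(S/\I_{\Gamma})=\dim_k\tilde H_{|W|-i-1}(\Gamma|_W;k)$ for $W\subseteq V$, the fine-graded Betti numbers of $S/\F(\Delta)$ in multidegrees contained in $A$ coincide with those of $S_A/\F(\Delta|_A)$; in particular the nonzero Betti number in multidegree $A$ that witnesses $\reg\,(S_A/\F(\Delta|_A))=r-k$ persists in $S/\F(\Delta)$, giving $\reg\,(S/\F(\Delta))\ge |A|-k=r-k$. Maximizing over all induced matchings then yields the statement.

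The routine parts are the identification of $Facets\,(\Delta|_A)$ and the complete-intersection computation; the only genuine obstacle is the induced-subcomplex monotonicity of regularity, i.e. the bookkeeping with Hochster's formula (or, alternatively, an iterated variable-deletion argument running the short exact sequences behind Corollary \ref{add1}), which is exactly what is imported from the references. Note that no properly-connectedness, connectivity, or triangulatedness hypothesis on $\Delta$ is used.
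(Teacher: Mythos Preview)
The paper does not supply its own proof of this lemma; it is stated as a citation of \cite[Corollary 3.5]{EF} and \cite[Corollary 3.9]{MV} and used as a black box. So there is no argument in the paper to compare your proposal against.

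That said, your argument is correct and is essentially the standard proof of the cited results. The two ingredients you isolate are exactly the right ones: (i) for the induced matching $\{F_1,\dots,F_k\}$ on $A=\bigcup F_i$, the ideal $\F(\Delta|_A)=(\xb_{F_1},\dots,\xb_{F_k})$ is a complete intersection (or, as you phrase it, satisfies the hypothesis of Lemma~\ref{sqm}), so $\reg(S_A/\F(\Delta|_A))=|A|-k$; and (ii) passing to an induced subcomplex of the Stanley--Reisner complex $\Gamma=\delta(\F(\Delta))$ cannot raise regularity, which you read off Hochster's formula via $(\Gamma|_A)|_W=\Gamma|_W$ for $W\subseteq A$. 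Your verification that $\F(\Delta)\cap S_A=\F(\Delta|_A)$ and that $\Gamma|_A=\delta(\F(\Delta|_A))$ is accurate, and the identification of the facets of $\Delta$ contained in $A$ with the $F_i$ is a correct use of the ``induced'' hypothesis. Nothing is missing.
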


\begin{Corollary}\label{chessbord1}
Let $\Delta_{m,n}$ be a chessboard complex with $n\geq m\geq 3$, then
$$
\reg(S/\F(\Delta_{m,n}))\geq 2(m-1).
$$
 \end{Corollary}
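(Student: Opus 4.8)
The plan is to exhibit an explicit induced matching in $\Delta_{m,n}$ consisting of $m-1$ facets whose union has exactly $2(m-1)$ vertices, and then invoke Lemma~\ref{matching}. Since $n\geq m\geq 3$, we have at least $m-1$ pairs of rows and $m-1$ pairs of columns available (actually $n\geq m$ columns, which is enough), so the construction has room to work. The natural candidate is to build $m-1$ facets, each of which is a "transversal" of the $m\times n$ board, but arranged so that facet number $k$ uses only two "new" squares in rows $2k-1$ and $2k$... however each facet must have size $m$, so we cannot have each facet contribute only two vertices to the union unless the facets share most of their squares — and disjoint facets share nothing. So instead the right picture is: since disjoint facets in an induced matching automatically have $|\bigcup F_i| = \sum |F_i| = km$, the bound from a matching of size $k$ is $km - k = k(m-1)$, and taking $k=2$ already gives $2(m-1)$. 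So the real task is just to find \emph{two} disjoint facets $F_1, F_2$ of $\Delta_{m,n}$ such that $\{F_1,F_2\}$ is an \emph{induced} matching, i.e. $Facets(\Delta_{m,n}|_{F_1\cup F_2}) = \{F_1, F_2\}$.

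The key steps, in order: (1) Write down two concrete disjoint facets. For instance take $F_1 = \{x_{1,1}, x_{2,2}, \ldots, x_{m,m}\}$ (the main diagonal of the leading $m\times m$ block) and $F_2 = \{x_{1,\sigma(1)}, \ldots, x_{m,\sigma(m)}\}$ where $\sigma$ is a permutation of $\{1,\ldots,m\}$ (if $n=m$) or of some other $m$ columns (if $n>m$ we have even more freedom) chosen so that $\sigma$ is a fixed-point-free permutation relative to the identity, i.e. $\sigma(i)\neq i$ for all $i$ — this guarantees $F_1\cap F_2=\emptyset$ since no square $x_{i,i}$ lies in $F_2$ and the rows are matched up. A derangement exists for $m\geq 2$. (2) Verify $F_1, F_2 \in Facets(\Delta_{m,n})$: each has $m$ squares, one per row, all in distinct columns, so each is an admissible rook configuration of maximal size, hence a facet. (3) Verify the induced condition: we must show that every facet $F$ of $\Delta_{m,n}$ contained in $F_1\cup F_2$ equals $F_1$ or $F_2$. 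Since $|F_1\cup F_2| = 2m$ with exactly two squares in each of the $m$ rows, any facet $F\subseteq F_1\cup F_2$ has exactly one square per row (it has $m$ squares in $m$ rows, and can't have two in one row), so $F$ picks, for each row $i$, either the $F_1$-square or the $F_2$-square in that row; the column-distinctness constraint must then force a consistent global choice. This is the crux.

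The main obstacle — and where care is needed — is step (3): an arbitrary derangement $\sigma$ need not make $\{F_1,F_2\}$ induced, because a "mixed" selection (some rows from $F_1$, others from $F_2$) might accidentally still use distinct columns and thus form a spurious third facet. One must choose $\sigma$ carefully to rule this out. A clean choice is the cyclic shift $\sigma(i) = i+1 \pmod m$ (on columns $1,\ldots,m$), so $F_2 = \{x_{1,2}, x_{2,3}, \ldots, x_{m-1,m}, x_{m,1}\}$. Then a mixed selection choosing $F_1$ in some nonempty proper subset $T\subsetneq\{1,\ldots,m\}$ of rows and $F_2$ elsewhere uses columns $\{i : i\in T\}\cup\{i+1 \bmod m : i\notin T\}$; one checks that this multiset has a repeated column whenever $\emptyset\neq T\neq\{1,\ldots,m\}$ — indeed if $i\notin T$ but $i+1\in T$ then column $i+1$ is used twice (once as $\sigma(i)$, once as the $F_1$-square of row $i+1$), and such an $i$ exists since $T$ is a nonempty proper subset of the cyclic group $\ZZ/m$ and hence has a "boundary" edge. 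So no mixed selection is admissible, the only facets in $F_1\cup F_2$ are $F_1$ and $F_2$, and $\{F_1,F_2\}$ is an induced matching with $|F_1\cup F_2| - 2 = 2m-2$. Applying Lemma~\ref{matching} gives $\reg(S/\F(\Delta_{m,n}))\geq 2(m-1)$, completing the proof. (When $n>m$ the same $F_1, F_2$ still work verbatim since columns $m+1,\ldots,n$ are simply unused; no modification is needed.)
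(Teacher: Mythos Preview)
Your proof is correct and takes essentially the same approach as the paper: the paper also chooses $F_1=\{x_{11},x_{22},\ldots,x_{mm}\}$ and $F_2=\{x_{12},x_{23},\ldots,x_{(m-1)m},x_{m1}\}$ (i.e.\ the cyclic shift $\sigma(i)=i+1\bmod m$) and invokes Lemma~\ref{matching}. In fact you supply more detail than the paper does, since the paper simply asserts that $\{F_1,F_2\}$ is an induced matching, whereas you give the ``boundary edge'' argument ruling out mixed selections.
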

 \begin{proof}
 Let $F_1=\{x_{11},x_{22},\ldots,x_{mm}\}$ and $F_2=\{x_{12},x_{23},\ldots,x_{(m-1)m},x_{m1}\}$, then
$\{F_1, F_2\}$ is an induced matching of $\Delta_{m,n}$. The result follows from  Lemma \ref{matching}.
\end{proof}

\begin{Theorem}\label{facetcover}
Let $\Delta_{3,n}$ be a chessboard complex with $n\geq 3$, then
$$\reg\,(S/\F(\Delta_{3,n}))=4.$$
 \end{Theorem}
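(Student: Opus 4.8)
The lower bound $\reg\,(S/\F(\Delta_{3,n}))\geq 4=2(m-1)$ with $m=3$ is already given by Corollary \ref{chessbord1}, so the whole content of the theorem is the reverse inequality $\reg\,(S/\F(\Delta_{3,n}))\leq 4$, equivalently $\reg\,(\F(\Delta_{3,n}))\leq 5$. The plan is to prove this by induction on $n$, using the colon/quotient short exact sequence machinery packaged in Corollary \ref{add1}, Remark \ref{add2}, and Remark \ref{add4}. For the base case $n=3$ the ideal $\F(\Delta_{3,3})$ is the explicit six-generator ideal of Example \ref{example1}; one checks directly (e.g.\ via Lemma \ref{sqm} after a suitable splitting, or a direct resolution computation) that its regularity as an ideal is $5$.

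For the inductive step I would single out the variables in the last column, say $x_{1n},x_{2n},x_{3n}$, and peel them off one at a time. Write $I=\F(\Delta_{3,n})$ and set $J_0=I$, $J_i=(I,x_{1n},\dots,x_{in})$. The key observations are: (i) $(J_{i-1}:x_{in})$ simplifies drastically — quotienting by a variable $x_{kn}$ in the last column kills every generator of $\F(\Delta_{3,n})$ that uses row $k$ in column $n$, and among the surviving generators those that still involve column $n$ get $x_{kn}$ absorbed, so up to a polynomial ring extension $(I:x_{in})$ decomposes as a sum of ideals supported on disjoint variable sets, one factor being (a copy of) $\F(\Delta_{2,n-1})$-type ideal whose regularity is controlled by Theorem \ref{depth}(a), together with an ideal generated by the column-$n$ generators that are now monomials of degree $2$; and (ii) after removing all three of $x_{1n},x_{2n},x_{3n}$ we are left with $\F(\Delta_{3,n-1})$ in a larger polynomial ring, whose regularity is $\leq 5$ by induction. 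Feeding the regularity bounds for the colon ideals (each of which, after the splitting, should come out to at most $5$ by Lemma \ref{sum1}(3)/(4), Theorem \ref{depth}, and Lemma \ref{sqm}) and the inductive bound for $J_3$ into the inequality
$$\reg\,(I)\leq \max\{\reg\,(J_{j-1}:x_{jn})+1,\ \reg\,(J_3)\mid j=1,2,3\}$$
from Remark \ref{add2} then gives $\reg\,(I)\leq 5$, hence $\reg\,(S/I)\leq 4$.

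The delicate point — and the step I expect to be the main obstacle — is the precise bookkeeping for the colon ideals $(J_{i-1}:x_{in})$. When one has already adjoined $x_{1n}$ (and possibly $x_{2n}$) and then colons by $x_{in}$, the generators of $I$ that remain split into several groups: generators not touching column $n$ at all (these form a facet-ideal of a smaller chessboard-like complex on rows $\neq i$, which one must identify and bound), generators touching column $n$ in a row already adjoined (these are absorbed into the $(x_{\cdot n})$ part), and generators touching column $n$ in row $i$ itself (these become degree-$2$ monomials after colon). One must verify that the resulting ideal is a sum, over disjoint sets of variables, of pieces each of regularity $\leq 4$ so that Lemma \ref{sum1}(3) yields $\leq 5$ for the sum; the piece coming from the two remaining rows is essentially a $\F(\Delta_{2,n-1})$-type ideal possibly mixed with extra degree-$2$ monomials, and controlling \emph{that} mixture is where Lemma \ref{depthreglemma} or another application of Remark \ref{add4} is needed. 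A cleaner alternative worth trying is to apply Corollary \ref{add3} directly with $V=\{x_{1n},x_{2n},x_{3n}\}$, reducing everything to bounding $\reg$ of the $8$ ideals $(I:\prod_{u\in W}u)+I_{W^c}$ for $W\subseteq\{x_{1n},x_{2n},x_{3n}\}$; each such ideal is, after discarding the variables forced into it, a sum of a smaller chessboard facet ideal and a monomial ideal satisfying the hypothesis of Lemma \ref{sqm}, and the regularity arithmetic closes with the inductive hypothesis on $\F(\Delta_{3,n-1})$.
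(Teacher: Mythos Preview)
Your induction-on-$n$ strategy differs from the paper's argument, and the structural claim it rests on is false: the colon ideals you produce do \emph{not} split as sums over disjoint variable sets. The clearest failure is in your ``cleaner alternative'' via Corollary~\ref{add3} with $V=\{x_{1n},x_{2n},x_{3n}\}$. Taking $W=V$ (so $W^c=\emptyset$, $d_W=3$) one finds that $\F(\Delta_{3,n}):x_{1n}x_{2n}x_{3n}$ is generated by all $x_{ia}x_{jb}$ with $i\neq j$ in $[3]$ and $a\neq b$ in $[n-1]$; this is the edge ideal of the graph on the $3\times(n-1)$ grid whose complement is the rook's graph, and the induced $4$-cycle $x_{11},x_{12},x_{22},x_{21}$ in that complement forces $\reg\geq 3$ by Fr\"oberg's theorem. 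Hence this term in Corollary~\ref{add3} contributes at least $3+3=6>5$, and the bound does not close. Your sequential Remark~\ref{add2} variant avoids this exact ideal, but the intermediate colons (e.g.\ $I:x_{1n}$, which equals $\F(\Delta_{2,n-1})_{\text{rows }2,3}+x_{2n}\cdot\F(\Delta_{2,n-1})_{\text{rows }1,3}+x_{3n}\cdot\F(\Delta_{2,n-1})_{\text{rows }1,2}$) are again sums of $\F(\Delta_{2,\cdot})$-type pieces \emph{sharing} row variables, so neither Lemma~\ref{sum1} nor Lemma~\ref{sqm} applies as you suggest; bounding them would require a further layer of colon arguments you have not supplied.

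The paper avoids column-peeling altogether. Instead of inducting on $n$, it adjoins to $\F(\Delta_{3,n})$ the degree-$2$ generators $u_p=x_{1i}x_{2j}$ of $\F(\Delta_{2,n})$ one at a time (Remark~\ref{add2} with $d=2$); the terminal ideal is then $\F(\Delta_{2,n})$ itself, whose regularity is $3$ by Theorem~\ref{depth}(a), and each colon $J_p:u_{p+1}$ is computed explicitly and shown to have regularity $\leq 3$ after one or two further colons by the row-$3$ variables $x_{3i},x_{3j}$. The point is that coloning by a degree-$2$ monomial $x_{1i}x_{2j}$ fixes two of the three row-choices in every facet, so only the row-$3$ slot remains free and the resulting pieces genuinely live in disjoint variable sets; coloning by single column variables, as you do, leaves two rows unconstrained and produces exactly the entangled sums that block your argument.
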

 \begin{proof} It is clear that $\reg\,(S/\F(\Delta_{3,n}))\geq 4$ by  Corollary \ref{chessbord1}, so it is enough to show
that $\reg\,(\F(\Delta_{3,n}))\leq 5$ by Lemma \ref{quotient} (3). We sort all the elements of  $\mathcal{G}(\F(\Delta_{2,n}))$ in lexicographic order
  with $x_{11}>x_{12}>\cdots>x_{1n}>x_{21}>x_{22}>\cdots>x_{2n}$, that is :
$$x_{11}x_{22}>x_{11}x_{23}>\cdots>x_{11}x_{2n}>x_{12}x_{21}>x_{12}x_{23}>\cdots>x_{1n}x_{21}>\cdots>x_{1n}x_{2,n-1}.$$
Let $u_p$ be the $p$-th element in  $\mathcal{G}(\F(\Delta_{2,n}))$, where $1\leq p\leq r$ and $r=|\mathcal{G}(\F(\Delta_{2,n}))|$.
Put  $J_0=\F(\Delta_{3,n})$, $Q_p=(u_1,u_2,\ldots,u_p)$ and $J_p=J_0+Q_p$ for any $ p\in [r]$.
By Remark \ref{add2}, we have
$$\reg\,(\F(\Delta_{3,n}))=\reg\,(J_0)\leq \max\{\reg\,(J_{p}:u_{p+1})+2,\  \reg\,(J_{r})\mid 0\leq p<r\}.\eqno(1)$$
Note that $\reg\,(J_r)=\reg\,(\F(\Delta_{2,n}))=3$ by Theorem \ref{depth}.
It is enough  to show that  $\reg\,(J_{p}:u_{p+1})\leq 3$ for any $0\leq p<r$ by formula (1). Let $u_{p+1}=x_{1i}x_{2j}$ with $1\leq i,j\leq n$ and $i\neq j$. By a direct calculation, we obtain that
$$J_0:u_{p+1}=\F(\Delta_{3,n}):x_{1i}x_{2j}=K_3+(x_{3i})(K_1+(x_{1j})K_2)+(x_{3j})(K_2+(x_{2i})K_1),\eqno(2)$$
and
$$
Q_p:u_{p+1}=Q_p:x_{1i}x_{2j}=\left\{\begin{array}{ll}
L_2\ \ &\text{if}\ \ i=1\ \text{and}\ j>1,\\
L_1+L_2\ \ &\text{if} \ \ j>i\geq 2,\\
L_1+(x_{11})L_3\ \ &\text{if}\ \ j=1\ \text{and}\ i>1,\\
L_1+L_2+(x_{1j})L_3\ \ &\text{if} \ \ i>j\geq 2,
\end{array}\right.
\eqno(3)$$
where $\mathcal {G}(K_{\ell})=\{x_{\ell1},\ldots, x_{\ell m}\}\setminus \{x_{\ell i},x_{\ell j}\}$ for $\ell\in [3]$, $\mathcal {G}(L_1)=\{x_{11},\ldots, x_{1(i-1)}\}\setminus \{x_{1j}\}$, $\mathcal {G}(L_2)=\{x_{21},\ldots, x_{2(j-1)}\}\setminus \{x_{2i}\}$ and $\mathcal {G}(L_3)=\{x_{2(j+1)},\ldots, x_{2n}\}$.
By Lemma \ref{depthreglemma} and formulas (2), (3), one has
\begin{eqnarray*}
\hspace{1cm} \mbox{reg}\,(J_p:u_{p+1})&=&\mbox{reg}\,((J_0:u_{p+1})+(Q_p:u_{p+1}))\\
&\leq&\left\{\begin{array}{ll}
\mbox{reg}\,(J_0:u_{p+1})\ \ &\text{if}\ \ i<j,\\
\mbox{reg}\,((J_0:u_{p+1})+(x_{1j})L_3)\ \ &\text{if} \ \ i>j.
\end{array}\right.\hspace{2cm}(4)
\end{eqnarray*}
We distinguish into the following two cases:

(i) If $i<j$. By formula (2), one has
$$J_0:u_{p+1}x_{3j}=K_3+(x_{3i})(K_1+(x_{1j})K_2)+K_2+(x_{2i})K_1=K_3+K_2+(x_{2i},x_{3i})K_1$$
 and $((J_0:u_{p+1}),x_{3j})=K_3+(x_{3j})+(x_{3i})(K_1+(x_{1j})K_2)$.  By Lemma  \ref{sum1} (3), (4), we get that
$\reg\,(J_0:u_{p+1}x_{3j})=\reg\,((x_{2i},x_{3i})K_1)=\reg\,((x_{2i},x_{3i}))+\reg\,(K_1)=2$
and
\begin{eqnarray*}
\reg\,((J_0:u_{p+1}),x_{3j})&=&\reg\,(K_3+(x_{3j})+(x_{3i})(K_1+(x_{1j})K_2))\\
&=&\reg\,((x_{3i})(K_1+(x_{1j})K_2))=\reg\,((x_{3i}))+\reg\,(K_1+(x_{1j})K_2)\\
&=&1+\reg\,((x_{1j})K_2)=3.
\end{eqnarray*}
Hence, using Corollary \ref{add1}, we have
$$\reg\,(J_p\!:\!u_{p+1})\le\reg\,(J_0\!:\!u_{p+1})\leq \max\{\reg\,(J_0\!:u_{p+1}x_{3j})+1,\reg\,((J_0\!:\!u_{p+1}),x_{3j})\}=3.$$

(ii) If $i>j$. Let $L=(J_0:u_{p+1})+(x_{1j})L_3$ for any $0\leq p\leq r-1$. Using Corollary \ref{add1} repeatedly, we obtain
\begin{eqnarray*}
\reg\,(J_p\!:\!u_{p+1})&\le&\reg(L)\leq \max\{\reg(L:x_{3i})+1, \reg(L,x_{3i}) \}\\
&\leq& \max\{\reg(L:x_{3i}x_{1j})+2, \reg(L:x_{3i},x_{1j})+1, \reg((L:x_{3j}),x_{3i})+1,\\
&&\ \ \ \ \ \  \reg(L,x_{3i},x_{3j})\}. \hspace{7.0cm}(5)
\end{eqnarray*}
Note that
\begin{eqnarray*}
(L:x_{3i}x_{1j})&=&K_3+K_1+K_2+L_3,\\
((L:x_{3i}),x_{1j})&=&K_3+K_1+(x_{3j})K_2+(x_{1j}),\\
((L:x_{3j}),x_{3i})&=&K_3+K_2+(x_{2i})K_1+(x_{1j})L_3+(x_{3i})\\
&=&K_3+K_2+(x_{2i})(K_1,x_{1j})+(x_{3i}),\\
(L,x_{3i},x_{3j})&=&K_3+(x_{1j})L_3+(x_{3i},x_{3j}).
\end{eqnarray*}
 By Lemma \ref{sum1} (3) and (4),  we obtain  $\mbox{reg}\,(L:x_{3i}x_{1j})=1$ and
$$\mbox{reg}\,(((L:x_{3i}),x_{1j}))=\mbox{reg}\,(((L:x_{3j}),x_{3i}))=\mbox{reg}\,((L,x_{3i},x_{3j}))=2.$$
Thus, by formula (5), we obtain that $\reg\,(J_p:u_{p+1})\leq 3$.\end{proof}

\medskip
Given a chessboard complex $\Delta_{m,n}$  with $n\geq m\geq 2$.  For $1\leq i\leq n$, we define $A_{m,i}$ to be a subcomplex of $\Delta_{m,n}$,
the set of its facets is $Facets\,(A_{m,i})=\{F\in Facets\,(\Delta_{m,n})\mid x_{ms}\notin F \text{\ \ for any\ } 1\leq s\leq i\}$
and $B_{m,i}$ to be a chessboard complex  obtained by removing   the $m$-th row cells and $i$-th column cells of of the original chessboard. It is obvious that $A_{m,n}=\emptyset$.

\begin{Example}
For a chessboard complex  $\Delta_{2,3}$,  $A_{2,2}$ is a subcomplex of $\Delta_{2,3}$ with  facets  $\{x_{11},x_{23}\}$ and
$\{x_{12},x_{23}\}$. $B_{2,2}$ is also a chessboard complex with
 facets $\{x_{11}\}$ and $\{x_{13}\}$.
\end{Example}

 \begin{Lemma}\label{depth1}
Let $\Delta_{m,n}$ be a chessboard complex and $\F_0:=\F(\Delta_{m,n})$  its facet ideal. For any $1\leq i\leq n$, let $\F_i$ and $G_i$ be  facet ideals of $A_{m,i}$ and  $B_{m,i}$, respectively. Then
\begin{enumerate}
 \item[(a)] $\F_n=(0)$ and $\sum\limits_{i=1}^{n}G_i=\F(\Delta_{m-1,n})$, where $\Delta_{m-1,n}$ is a chessboard complex obtained by removing   the $m$-th row cells  of $\Delta_{m,n}$,
 \item[(b)] $(\F_{i-1}:x_{mi})=\F_i+\G_{i}$ and $(\F_{i-1}, x_{mi})=(\F_i,x_{mi})$ for any $i=1,\ldots,n$.
\end{enumerate}
 \end{Lemma}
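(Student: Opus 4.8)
The plan is to compute everything directly from the explicit monomial generating sets. From the definitions, $\F_{i-1}=\F(A_{m,i-1})$ is generated by the monomials $x_{1\ell_1}\cdots x_{m\ell_m}$ with $\ell_1,\dots,\ell_m\in[n]$ pairwise distinct and $\ell_m\geq i$; here the condition $\ell_m\geq i$ is exactly the translation of ``$x_{ms}\notin F$ for all $s\leq i-1$'' in the definition of $A_{m,i-1}$. Likewise $\F_i=\F(A_{m,i})$ is generated by those $x_{1\ell_1}\cdots x_{m\ell_m}$ with the $\ell$'s pairwise distinct and $\ell_m>i$, the ideal $G_i=\F(B_{m,i})$ is generated by the $x_{1\ell_1}\cdots x_{(m-1)\ell_{m-1}}$ with $\ell_1,\dots,\ell_{m-1}\in[n]\setminus\{i\}$ pairwise distinct, and $\F(\Delta_{m-1,n})$ by the $x_{1\ell_1}\cdots x_{(m-1)\ell_{m-1}}$ with $\ell_1,\dots,\ell_{m-1}\in[n]$ pairwise distinct.

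For (a): since $A_{m,n}=\emptyset$ (as already noted before the lemma), $\F_n=\F(\emptyset)=(0)$. For the equality $\sum_{i=1}^nG_i=\F(\Delta_{m-1,n})$, the inclusion $\subseteq$ is clear because every generator of every $G_i$ is among the listed generators of $\F(\Delta_{m-1,n})$. Conversely, given a generator $u=x_{1\ell_1}\cdots x_{(m-1)\ell_{m-1}}$ of $\F(\Delta_{m-1,n})$, the index set $\{\ell_1,\dots,\ell_{m-1}\}$ has cardinality $m-1<n$ (using $n\geq m$), so there is some $i\in[n]\setminus\{\ell_1,\dots,\ell_{m-1}\}$, and then $u$ is a generator of $G_i$; hence $\F(\Delta_{m-1,n})\subseteq\sum_{i=1}^nG_i$.

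For (b): I would apply the standard rule $(I:f)=(v_1/\gcd(v_1,f),\dots,v_k/\gcd(v_k,f))$ for a monomial ideal $I=(v_1,\dots,v_k)$ and a monomial $f$, with $f=x_{mi}$. The key point is that in a generator $w=x_{1\ell_1}\cdots x_{m\ell_m}$ the only variable lying in row $m$ is $x_{m\ell_m}$, so $x_{mi}\mid w$ if and only if $\ell_m=i$. Splitting the generators of $\F_{i-1}$ (those with $\ell_m\geq i$) accordingly: when $\ell_m=i$ we get $w/x_{mi}=x_{1\ell_1}\cdots x_{(m-1)\ell_{m-1}}$, and as $w$ ranges over such generators this ranges exactly over the generators of $G_i$; when $\ell_m>i$ we have $\gcd(w,x_{mi})=1$, and $w$ ranges exactly over the generators of $\F_i$. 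Therefore $(\F_{i-1}:x_{mi})=\F_i+G_i$. For the second identity, the generators of $\F_{i-1}$ with $\ell_m=i$ are all divisible by $x_{mi}$, hence redundant in $(\F_{i-1},x_{mi})$, so $(\F_{i-1},x_{mi})$ is generated by $x_{mi}$ together with the generators of $\F_{i-1}$ with $\ell_m>i$, that is, $(\F_{i-1},x_{mi})=(\F_i,x_{mi})$.

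No genuine difficulty is expected: the entire argument reduces to reading off the correct monomial generators of $A_{m,i-1}$, $A_{m,i}$, $B_{m,i}$ and $\Delta_{m-1,n}$ from their combinatorial descriptions and then performing elementary colon and sum computations on monomials. The only things to be careful about are the index shift --- the facets of $A_{m,i-1}$ are those whose row-$m$ column index is $\geq i$, not $\geq i-1$ --- and, for (a), using the hypothesis $n\geq m$ to guarantee a column index not occupied by a given generator of $\F(\Delta_{m-1,n})$.
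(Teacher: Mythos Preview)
Your proof is correct and is essentially the same as the paper's, just more explicit. The paper condenses everything into the single identity $\F_{i-1}=\sum_{j=i}^{n}x_{mj}G_j$ and says (b) follows; your splitting of the generators of $\F_{i-1}$ according to whether $\ell_m=i$ or $\ell_m>i$ is exactly the unpacking of that identity, and your computations of the colon and the sum are precisely what ``(b) follows'' means.
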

\begin{proof}
(a) is obvious. Since  $\F_{i-1}=\sum\limits_{j=i}^{n}x_{mj}\G_j$ for $1\leq i\leq n$,  (b)  follows.
\end{proof}

 \begin{Remark}\label{depth2}
Let $\Delta_{m,n}$ be a chessboard complex and $\F_0:=\F(\Delta_{m,n})$ be its facet ideal. Let $V=\{x_{m1},x_{m2},\ldots,x_{mn}\}$,  $W\subseteq V$ and
$\F_{W}=(\F_0:\prod\limits_{x_{mi}\in W}x_{mi})+P_{W^c}$. Using the lemma above repeatedly, we obtain
$$\F_{W}=\sum\limits_{x_{mi}\in W}G_i+P_{W^c}$$
where  $P_{W^c}$ is an ideal generated by $W^c$ and $W^c$ is the complement set of $W$ in  $V$.
\end{Remark}

Given a chessboard complex $\Delta_{m,n}$ with $n\geq m$. For $1\leq i_1<\cdots <i_m\leq n$,
we define a subcomplex $\D_{i_1,\ldots,i_m}$ of $\Delta_{m,n}$,
the set of its facets is $Facets\,(\D_{i_1,\ldots,i_m})=\{F\in Facets\,(\Delta_{m,n})\mid F\subseteq \bigcup\limits_{\ell=1}^{m}\{x_{1i_\ell},x_{2i_\ell},\ldots,x_{mi_\ell}\}\}$.
In fact, it is a chessboard complex on an $ m\times m$ chessboard composed of the $i_1$-th, $\dots, i_m$-th  columns of the original chessboard.

\begin{Theorem}\label{depth4}
Let $\Delta_{m,n}$ be a chessboard complex  with $n\geq m$ and $$\Omega=\{\D_{i_1,\ldots,i_m}\mid 1\leq i_1<\cdots <i_m\leq n\}.$$
Then
$$
\depth\,\left(\frac{S}{\sum\limits_{\D_{i_1,\ldots,i_m}\in \Omega'}\F(\D_{i_1,\ldots,i_m})}\right)\geq 2(m-1)
$$
for any  nonempty subcollection $\Omega'$ of  $\Omega$.
\end{Theorem}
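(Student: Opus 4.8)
The plan is to prove the statement by induction on $m$, the size of the chessboard rows, using the structural decomposition provided by Lemma \ref{depth1} and Remark \ref{depth2}. The base case is $m=2$: here $\sum_{\D_{i_1,i_2}\in\Omega'}\F(\D_{i_1,i_2})$ is a sum of edge ideals of the disjoint $2$-cycles $\{x_{1i_1}x_{2i_2},\,x_{1i_2}x_{2i_1}\}$, and since distinct columns give ideals in disjoint sets of variables, repeated application of Lemma \ref{sum1}(1) together with Theorem \ref{depth}(c) (each $\F(\Delta_{2,2})$ is a complete intersection with $\depth(S/\F(\Delta_{2,2}))=2$) shows that $\depth$ of the quotient is at least $2=2(m-1)$; one must also account for the ``free'' variables appearing in no generator, but these only increase the depth. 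For the inductive step, fix a nonempty subcollection $\Omega'\subseteq\Omega$ and set $I=\sum_{\D\in\Omega'}\F(\D)$. I would single out the last row: let $V=\{x_{m1},\dots,x_{mn}\}$ and apply Corollary \ref{add3}(2) (equivalently, the iterated short exact sequence argument of Remark \ref{add2}) along the monomials in $V$, which yields
$$\depth\,(S/I)\ \geq\ \min\Big\{\depth\Big(\frac{S}{(I:\textstyle\prod_{x_{mi}\in W}x_{mi})+P_{W^c}}\Big)\ \Big|\ W\subseteq V\Big\}.$$
So it suffices to bound below the depth of each colon-plus-cover quotient.

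The key computation is the analogue of Remark \ref{depth2} for the ideal $I$ (rather than for the full facet ideal): for $W\subseteq V$, the ideal $(I:\prod_{x_{mi}\in W}x_{mi})+P_{W^c}$ should decompose, after deleting the variables $x_{mi}$ with $x_{mi}\in W^c$ (they form a regular sequence modulo the rest, since $P_{W^c}$ contributes exactly these variables and they appear in no surviving generator), as
$$\big(\text{an ideal supported on the smaller }(m-1)\times n\text{ chessboard}\big)\ +\ P_{W^c}.$$
More precisely, colonizing $\F(\D_{i_1,\dots,i_m})$ by $x_{mi}$ either kills that summand (if $x_{mi}\notin P_{W^c}$ but the column $i$ is not among $i_1,\dots,i_m$, in which case the colon is the whole ring — absorbed) or replaces it by the facet ideal of the corresponding $(m-1)\times(m-1)$ chessboard on the remaining columns; when $x_{mi}\in W^c$ the generator $x_{mi}u$ with $u$ an $(m-1)$-column monomial is untouched but $x_{mi}\in P_{W^c}$, so the whole generator lies in $P_{W^c}$. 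Carefully sorting these cases shows that, modulo the regular sequence $\{x_{mi}:x_{mi}\in W^c\}$ and discarding the ``free'' variables, the relevant ideal is exactly $\sum_{\D'\in\Omega''}\F(\D')$ for some (possibly empty) subcollection $\Omega''$ of the analogous $\Omega$ for $\Delta_{m-1,n}$. If $\Omega''$ is empty the quotient is a polynomial ring and the depth is huge; otherwise the induction hypothesis gives depth at least $2(m-2)$ for that piece, and then Lemma \ref{sum1}(1) applied to the tensor decomposition $\big(\text{small piece}\big)\otimes_k k[\text{free and }W^c\text{ variables}]$ — where the second factor contributes a genuine polynomial ring, hence is Cohen--Macaulay of depth equal to its number of variables — adds at least $2$ (in fact we only need $2$ extra, which is available as soon as $W^c\neq\emptyset$ or there are at least two free variables, and the degenerate remaining cases can be checked by hand). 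Combining, $\depth(S/((I:\cdot)+P_{W^c}))\geq 2(m-2)+2=2(m-1)$ for every $W$, and the displayed inequality finishes the induction.

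The main obstacle I expect is the bookkeeping in the inductive step: making precise exactly which $(m-1)\times(m-1)$ subchessboards survive in $(I:\prod_{x_{mi}\in W}x_{mi})$ for a given $W$, checking that the surviving ideal is again of the form $\sum_{\D'\in\Omega''}\F(\D')$ on the reduced board (and not something larger that the induction does not cover), and verifying that the extra variables really do split off as a tensor factor so that Lemma \ref{sum1} applies cleanly. A secondary technical point is handling the small/degenerate cases — $m=2$, or $W$ equal to $V$ or to $\emptyset$, or subcollections $\Omega''$ that become empty — where one must argue directly rather than through the induction; these are routine but need to be listed explicitly so the ``$\geq$'' is not lost at a boundary.
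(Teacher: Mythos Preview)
Your overall strategy---induction on $m$, peel off the last row via Corollary~\ref{add3}(2), identify $(I:\prod_{W})+P_{W^c}$ as a sum of facet ideals of $(m-1)$-row sub-chessboards plus $P_{W^c}$, and invoke the inductive hypothesis---is exactly the paper's approach. However, two of your steps are genuinely wrong, not merely imprecise.

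\textbf{Base case.} Your $m=2$ argument fails: distinct $\D_{i_1,i_2}$ need not use disjoint columns (e.g.\ $\D_{1,2}$ and $\D_{1,3}$ share column~$1$), so the summands do not live in disjoint variable sets and Lemma~\ref{sum1} does not apply. The paper starts at $m=1$, where the claim is $\depth\geq 0$ and is trivial; the $m=2$ case then falls out of the inductive step. (Incidentally, your parenthetical ``the colon is the whole ring'' when $i\notin\{i_1,\dots,i_m\}$ is also wrong: coloning by a variable not in the support leaves the ideal unchanged; that summand is absorbed not by the colon but by $P_{W^c}$, since then every generator carries some $x_{mi_\ell}\in W^c$.)

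\textbf{The case $|W|=1$.} This is the real gap. After modding out $P_{W^c}$, the variables in $W^c$ are \emph{killed}, not free; they contribute nothing to depth. What survives from row~$m$ is only $k:=|W|$ free variables, so your tensor split gives
\[
\depth(S/\F_W)=\depth(S_2/\F'_W)+k\ \geq\ 2(m-2)+k,
\]
with $S_2=k[x_{ij}:i\leq m-1]$. For $k\geq 2$ this suffices, but for $k=1$ you get only $2m-3<2(m-1)$. Your criterion ``$W^c\neq\emptyset$'' is exactly backwards. The paper's fix is to observe that when $W=\{x_{ms_1}\}$, every surviving sub-chessboard $\B_{i_1,\dots,i_m,s_1}$ omits column~$s_1$; hence $\F'_W$ actually lives in $S_1=k[x_{ij}:i\leq m-1,\ j\neq s_1]$, and the $m-1$ variables $x_{1s_1},\dots,x_{(m-1)s_1}$ are additional free variables. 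This yields $\depth(S/\F_W)=\depth(S_1/\F'_W)+m\geq 2(m-2)+m\geq 2(m-1)$. You need this $k=1$ versus $k\geq 2$ split explicitly.
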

\begin{proof}
We apply induction on $m$. Case $m=1$ is trivial. Now suppose  $m\geq 2$.
 Let $V=\{x_{m1},x_{m2},\ldots,x_{mn}\}$,  $W\subseteq V$ and denote
$$\F_{W}=(\sum\limits_{\D_{i_1,\ldots,i_m}\in \Omega'}\F(\D_{i_1,\ldots,i_m}):\prod\limits_{x_{m\ell}\in W}x_{m\ell})+ P_{W^c}$$
where $P_{W^{c}}$ is an ideal generated by $V\setminus W$.
By Corollary \ref{add3}, it is enough to show that
$$\depth\,(S/\F_{W})\geq 2(m-1).$$

If $W=\emptyset$, then $\F_{W}=\sum\limits_{\D_{i_1,\ldots,i_m}\in \Omega'}\F(\D_{i_1,\ldots,i_m})+P_{W^{c}}=P_{W^{c}}=(x_{m1},\ldots,x_{mn})$. In this case,
$\depth\,(S/\F_W)=\depth\,(S/(x_{m1},\ldots,x_{mn}))=n(m-1)\geq 2(m-1).$

If $W\neq\emptyset$. We may assume $W=\{x_{ms_1},x_{ms_2},\ldots,x_{ms_k}\}$
and
$$\Omega''=\{\D_{i_1,\ldots,i_m}\in \Omega'\mid \{i_1,\ldots,i_m\}\cap\{s_1,\ldots,s_k\}\neq \emptyset\}.$$
We consider the following two cases:

\vspace{3mm}
(a) If $\Omega''= \emptyset$, then $ \{i_1,\ldots,i_m\}\cap\{s_1,\ldots,s_k\}=\emptyset$ for any $\D_{i_1,\ldots,i_m}\in \Omega'$. It follows that
$$(\F(\D_{i_1,\ldots,i_m}):\prod\limits_{x_{m\ell}\in W}x_{m\ell})=\F(\D_{i_1,\ldots,i_m})\subset P_{W^c}$$
because of  $\{x_{mi_1},\ldots,x_{mi_m}\}\subseteq W^c$. Thus
$$\F_W=(\sum\limits_{\D_{i_1,\ldots,i_m}\in \Omega'}\F(\D_{i_1,\ldots,i_m}):\prod\limits_{x_{m\ell}\in W}x_{m\ell})
+ P_{W^c}=P_{W^c}.$$
Thus, we have $\depth\,(S/\F_W)=\depth\,(S/P_{W^c})=n(m-1)+k> 2(m-1)$.

(b)  If $\Omega''\neq \emptyset$, then
\begin{eqnarray*}
\F_W\!\!&=&\!\!(\sum\limits_{\D_{i_1,\ldots,i_m}\in \Omega'}\F(\D_{i_1,\ldots,i_m}):\prod\limits_{x_{m\ell}\in W}x_{m\ell})
+ P_{W^c}\\
\!\!&=&\!\!\!\!\sum\limits_{\D_{i_1,\ldots,i_m}\in \Omega''}\!\!\!(\F(\D_{i_1,\ldots,i_m}):\!\!\prod\limits_{x_{m\ell}\in W}\!\!x_{m\ell})+\!\!\!\!\sum\limits_{\D_{i_1,\ldots,i_m}\in \Omega'\setminus \Omega''}(\F(\D_{i_1,\ldots,i_m}):\!\!\prod\limits_{x_{m\ell}\in W}\!\!x_{m\ell})+P_{W^c}\\
\!\!&=&\!\!\!\sum\limits_{\D_{i_1,\ldots,i_m}\in \Omega''}\!\!\!(\F(\D_{i_1,\ldots,i_m})\!:\!\!\prod\limits_{x_{m\ell}\in W}x_{m\ell})+P_{W^c}\\
\!\!&=&\!\!\!\sum\limits_{\D_{i_1,\ldots,i_m}\in \Omega''}\!\sum\limits_{s_{\ell}\in A_{i_1,\ldots,i_m}}\!\!\!\!\!\F(\B_{i_1,\ldots,i_m,s_{\ell}})\!+\!P_{W^c}\\
\!\!&=&\F'_W+P_{W^c},
\end{eqnarray*}
where the third and fourth equalities  hold because of $(\F(\D_{i_1,\ldots,i_m}):\prod\limits_{x_{m\ell}\in W}x_{m\ell})=\F(\D_{i_1,\ldots,i_m})\subset P_{W^c}$ for any $\D_{i_1,\ldots,i_m}\in \Omega'\setminus \Omega''$ and Remark \ref{depth2}, where $A_{i_1,\ldots,i_m}=\{i_1,\ldots,i_m\}\cap\{s_1,\ldots,s_k\}$,
$\B_{i_1,\ldots,i_m,s_{\ell}}$ is a chessboard complex obtained by removing the $m$-th row cells and $s_{\ell}$-th column cells of the chessboard corresponding to $\D_{i_1,\ldots,i_m}$ and
$$\F'_W=\sum\limits_{\D_{i_1,\ldots,i_m}\in \Omega''}\sum\limits_{s_{\ell}\in A_{i_1,\ldots,i_m}}\F(\B_{i_1,\ldots,i_m,s_{\ell}}).$$
We distinguish into  the following two cases:

(i) If $k=1$, then  $A_{i_1,\ldots,i_m}=\{s_1\}$  for any $\D_{i_1,\ldots,i_m}\in \Omega''$. Thus there exists some $j\in [m]$ such that  $i_j=s_1$.  In this case,  $\F'_W=\sum\limits_{\D_{i_1,\ldots,i_m}\in \Omega''}\F(\B_{i_1,\ldots,i_m,s_{\ell}})$ and $\B_{i_1,\ldots,i_m,s_1}$ is a subcomplex of $\Delta_{m-1,n-1}$
with $Facets\,(\B_{i_1,\ldots,i_m,s_1})=\{F\in Facets\,(\Delta_{m-1,n-1})\mid F\subset \bigcup\limits_{\begin{subarray}{c}
j=1\\
i_j\neq s_1\\
\end{subarray}}^{m}\{x_{1i_j},x_{2i_j},\ldots,x_{mi_j}\}\}$,
whereas $\Delta_{m-1,n-1}$ is a chessboard complex obtained by deleting the $m$-th row cells and the $s_1$-th column cells of the $m\times n$ chessboard.
By induction hypothesis, we obtain $$\depth\,(S/\F_W)=\depth\,(S_1/\F'_W)+m\geq 2(m-2)+m\geq 2(m-1)$$
where $S_1=k[x_{ij}\mid i\in[m-1]  \text{ and\ }j\in[n]\setminus \{s_1\} ]$.

(ii) If $k\geq 2$, then for any $s_{\ell}\in A_{i_1,\ldots,i_m}$, all $\B_{i_1,\ldots,i_m,s_{\ell}}$ are subcomplexes of $\Delta_{m-1,n}$
with $Facets\,(\B_{i_1,\ldots,i_m,s_{\ell}})=\{F\in Facets\,(\Delta_{m-1,n})\mid F\subset \bigcup\limits_{\begin{subarray}{c}
j=1\\
i_j\neq s_\ell\\
\end{subarray}}^{m}\{x_{1i_j},x_{2i_j},\ldots,x_{mi_j}\}\}$, where $\Delta_{m-1,n}$ is a  chessboard complex obtained by removing the $m$-th row cells  of the $m\times n$ chessboard.
By induction hypothesis, we obtain
$$\depth\,(S/\F_W)=\depth\,(S_2/\F'_W)+k\geq 2(m-2)+k\geq 2(m-1)$$
where $S_2=k[x_{ij}\mid i\in[m-1]  \text{ and\ } j\in[n]]$.
\end{proof}

\begin{Corollary}
Let $\Delta_{m,n}$ be a chessboard complex with $n\geq m\geq 1$, then
  $$\depth\,(S/\F(\Delta_{m,n}))\geq 2(m-1).$$
 \end{Corollary}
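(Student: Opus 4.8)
The plan is to deduce this corollary immediately from Theorem \ref{depth4} by specializing the subcollection $\Omega'$ to be all of $\Omega$. First I would observe that every facet of $\Delta_{m,n}$ lies in some $\D_{i_1,\ldots,i_m}$: given a facet $F=\{x_{1t_1},\ldots,x_{mt_m}\}$ with $t_1,\ldots,t_m\in[n]$ pairwise distinct, take $\{i_1<\cdots<i_m\}=\{t_1,\ldots,t_m\}$, and then $F\subseteq\bigcup_{\ell=1}^m\{x_{1i_\ell},\ldots,x_{mi_\ell\}}$, so $F\in Facets\,(\D_{i_1,\ldots,i_m})$. Conversely each $\D_{i_1,\ldots,i_m}$ is a subcomplex of $\Delta_{m,n}$, so every facet appearing in any $\F(\D_{i_1,\ldots,i_m})$ is a facet of $\Delta_{m,n}$. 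Hence
$$
\F(\Delta_{m,n})=\sum_{\D_{i_1,\ldots,i_m}\in\Omega}\F(\D_{i_1,\ldots,i_m}),
$$
and $\Omega$ itself is a nonempty subcollection of $\Omega$ (here $n\geq m\geq 1$ guarantees $\Omega\neq\emptyset$).

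Next I would apply Theorem \ref{depth4} with $\Omega'=\Omega$, which gives directly
$$
\depth\,\Bigl(\tfrac{S}{\F(\Delta_{m,n})}\Bigr)=\depth\,\Bigl(\tfrac{S}{\sum_{\D_{i_1,\ldots,i_m}\in\Omega}\F(\D_{i_1,\ldots,i_m})}\Bigr)\geq 2(m-1).
$$
For the degenerate case $m=1$ one notes $\F(\Delta_{1,n})=(x_{11},\ldots,x_{1n})$ and $\depth\,(S/\F(\Delta_{1,n}))=0=2(m-1)$, consistent with the bound; this is also covered by the base case in the proof of Theorem \ref{depth4}.

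Since all the real work has already been done in Theorem \ref{depth4}, there is essentially no obstacle here; the only point requiring a line of care is the identification $\F(\Delta_{m,n})=\sum_{\Omega}\F(\D_{i_1,\ldots,i_m})$, i.e. checking that no spurious generators are introduced by the sum and that every generator of $\F(\Delta_{m,n})$ is captured. Both directions follow from the definition of $\D_{i_1,\ldots,i_m}$ as the chessboard complex on the $m\times m$ subboard in columns $i_1,\ldots,i_m$, together with the fact that a size-$m$ admissible rook configuration on the $m\times n$ board occupies exactly $m$ distinct columns and therefore sits inside precisely one such subboard.
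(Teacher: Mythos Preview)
Your proposal is correct and follows exactly the paper's own approach: the paper simply writes $\F(\Delta_{m,n})=\sum_{\D_{i_1,\ldots,i_m}\in\Omega}\F(\D_{i_1,\ldots,i_m})$ and invokes Theorem~\ref{depth4} with $\Omega'=\Omega$. You have merely supplied the (easy) justification for that decomposition, which the paper leaves implicit.
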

 \begin{proof} Since $\F(\Delta_{m,n})$ can be expressed as
 $\F(\Delta_{m,n})=\sum\limits_{\D_{i_1,\ldots,i_m}\in \Omega}\F(\D_{i_1,\ldots,i_m})$
where $\Omega=\{\D_{i_1,\ldots,i_m}\mid 1\leq i_1<\cdots <i_m\leq n\}$. The desired result
follows from Theorem \ref{depth4}.
\end{proof}

\medskip
Let $G$ be a simple graph on the set $V=\{x_1,\ldots,x_n\}$, and let  $m\le n$ be an integer. A path of length $m$ of $G$ is a sequence $x_{i_1},\ldots,x_{i_m}$  of vertices  with the property that
$(x_{i_k},x_{i_{k+1}})$ is a directed edge  of $G$ from $x_{i_k}$ to $x_{i_{k+1}}$ for $k=1,\ldots,m-1$. The path ideal of $G$ of length $m$ is a monomial ideal
$$
P_{m}=(\{x_{i_1}\cdots x_{i_m}\mid x_{i_1},\ldots,x_{i_m} \text{\ is a path of length\ } m \text{\ in\ } G\}).
$$
In particular, if $G$ is a cycle, then
$$P_m=(x_{i+1}\cdots x_{i+m}\mid i=1,2,\ldots,n),$$
where $x_{i+j}=x_{k}$ if $i+j\equiv k$ mod $n$ for $i\in [n]$, $j\in [m]$.
The following lemma holds from \cite[Corollary 5.1]{Zh}
and \cite[Theorem 3.3]{Zh1}.

 \begin{Lemma}\label{path} Let $G$ be a path or a cycle  with $n$ vertices and $P_{m}$ be its  path ideal  of length $m$.
Then $\reg\,(P_2)=\left\lfloor\frac{n+1}{3}\right\rfloor+1$ and $\reg\,(P_{n-1})=n-1$.
\end{Lemma}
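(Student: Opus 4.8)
The plan is to prove the two equalities separately. Note that $P_2$ is nothing but the edge ideal $I(G)$ of $G$, and that $P_{n-1}$ has very few generators; so the second equality should be essentially bookkeeping, while the first will rest on the (well known) regularity of edge ideals of paths and cycles. Throughout write $L_k$ for the path graph on $k$ vertices.

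For $\reg\,(P_{n-1})=n-1$ I would proceed as follows. If $G=L_n$ with vertices $x_1,\dots,x_n$, then $P_{n-1}=(x_1x_2\cdots x_{n-1},\,x_2x_3\cdots x_n)$ has exactly two generators; $x_1$ divides the first and no other one, and $x_n$ divides the second and no other one, while $|\supp\,(P_{n-1})|=n$, so Lemma \ref{sqm} gives $\reg\,(P_{n-1})=n-2+1=n-1$ at once. If $G=C_n$, then $P_{n-1}=(x_1\cdots x_n/x_i\mid i\in[n])$, and the key observation is that this is the Alexander dual of the edge ideal $I(K_n)=(x_ix_j\mid i<j)$ of the complete graph on $x_1,\dots,x_n$: the minimal vertex covers of $K_n$ are exactly the sets $\{x_j\mid j\neq i\}$, $i\in[n]$, so $I(K_n)=\bigcap_{i\in[n]}(x_j\mid j\neq i)$, whence $I(K_n)^{\,\vee}=(\prod_{j\neq i}x_j\mid i\in[n])=P_{n-1}$. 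By Lemma \ref{ha} one then has $\reg\,(P_{n-1})=\pd\,(S/I(K_n))$, and since $S/I(K_n)$ is the Stanley--Reisner ring of the $0$-dimensional complex on $n$ disjoint vertices, which is Cohen--Macaulay of dimension $1$, the Auslander--Buchsbaum formula of Lemma \ref{quotient}(1) gives $\pd\,(S/I(K_n))=n-1$, as desired.

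For $\reg\,(P_2)=\lfloor\frac{n+1}{3}\rfloor+1$, equivalently $\reg\,(S/I(G))=\lfloor\frac{n+1}{3}\rfloor$ by Lemma \ref{quotient}(3), the plan is an induction on $n$ via the exact sequence $0\to (S/(I(G):x))(-1)\to S/I(G)\to S/(I(G),x)\to 0$ for a well chosen vertex $x$, together with Lemma \ref{depthlemma}. For $G=L_n$ this is routine: $L_n$ is a forest, hence chordal, so $\reg\,(S/I(L_n))$ equals its induced matching number, and the edges in positions $1,4,7,\dots$ form a maximum induced matching, of size $\lfloor\frac{n+1}{3}\rfloor$ (the matching lower bound is in any case immediate from Lemma \ref{matching}). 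For $G=C_n$ I would take $x=x_1$, for which $I(C_n):x_1=(x_2,x_n)+I(L_{n-3})$ (the path on $x_3,\dots,x_{n-1}$) and $(I(C_n),x_1)=(x_1)+I(L_{n-1})$ (the path on $x_2,\dots,x_n$); since killing a variable or adjoining a free one does not change the regularity, these have regularities $\lfloor\frac{n-2}{3}\rfloor$ and $\lfloor\frac{n}{3}\rfloor$ by the path case, so Lemma \ref{depthlemma}(1) gives
$$\reg\,(S/I(C_n))\ \le\ \max\Big\{\Big\lfloor\tfrac{n-2}{3}\Big\rfloor+1,\ \Big\lfloor\tfrac{n}{3}\Big\rfloor\Big\}\ =\ \Big\lfloor\tfrac{n+1}{3}\Big\rfloor$$
in every residue class of $n$ modulo $3$. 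For the reverse inequality, when $n\not\equiv 2\pmod 3$ the $n$-cycle itself has an induced matching of size $\lfloor n/3\rfloor=\lfloor\frac{n+1}{3}\rfloor$ (again the edges in positions $1,4,7,\dots$), and Lemma \ref{matching} finishes the argument; when $n\equiv 2\pmod 3$ one still has to produce the extra $+1$.

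That last case, $G=C_n$ with $n\equiv 2\pmod 3$, is the one I expect to be the real obstacle: there $\reg\,(S/I(C_n))=\tfrac{n+1}{3}$ strictly exceeds the induced matching number $\tfrac{n-2}{3}$ (the smallest instance being $\reg\,(S/I(C_5))=2$), so Lemma \ref{matching} cannot yield a sharp bound and one genuinely needs the explicit minimal free resolution of $I(C_n)$ — which is exactly what is carried out in \cite{Zh,Zh1}, and why we cite those papers here. Everything else is routine: the path cases are immediate, and the cycle case of $\reg\,(P_{n-1})$ reduces to the elementary fact $\pd\,(S/I(K_n))=n-1$.
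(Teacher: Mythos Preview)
The paper does not actually prove this lemma: it simply records that the two equalities follow from \cite[Corollary~5.1]{Zh} and \cite[Theorem~3.3]{Zh1}. Your proposal is therefore not a reproduction of the paper's argument but a (largely) self-contained replacement for it, and as such it is correct in every detail you write down.

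Your treatment of $\reg\,(P_{n-1})=n-1$ is complete and more transparent than a bare citation: Lemma~\ref{sqm} dispatches the path case, and the Alexander-duality reduction $P_{n-1}=I(K_n)^{\vee}$ together with Lemma~\ref{ha} and the elementary fact that $S/I(K_n)$ is Cohen--Macaulay of depth~$1$ handles the cycle case cleanly. For $\reg\,(P_2)$ your path argument via the induced matching number of a chordal graph is standard and correct, and your colon/sum decomposition of $I(C_n)$ with respect to $x_1$ gives the sharp upper bound $\lfloor\tfrac{n+1}{3}\rfloor$ in every residue class; the induced matching then gives the matching lower bound except when $n\equiv 2\pmod 3$.

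You are right that this last case is the only genuine obstruction, and your diagnosis is accurate: for $n\equiv 2\pmod 3$ one has $\reg\,(S/(I(C_n):x_1))+1=\lfloor\tfrac{n+1}{3}\rfloor$ while $\reg\,(S/(I(C_n),x_1))=\lfloor\tfrac{n+1}{3}\rfloor-1$, so the equality clause in Lemma~\ref{depthlemma}(1) fails and the short exact sequence alone cannot certify the lower bound. Falling back on \cite{Zh,Zh1} for that residual case is exactly what the paper does globally, so your argument is at least as complete as the paper's and considerably more informative.
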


 \begin{Lemma}\label{three1} Let $S=k[x_1,\ldots,x_6]$ be the  polynomial ring over a field $k$ and let $I=(x_1x_2,x_1x_3,x_2x_3,x_4x_5,x_4x_6,x_5x_6,x_1x_4,x_2x_5,x_3x_6)\subset S$ be a monomial ideal. Then we obtain  by  using CoCoA \cite{Co} that $\reg\,(I)=3$.
\end{Lemma}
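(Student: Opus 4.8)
The plan is to identify the ideal $I$ with a facet ideal of a small chessboard complex and then compute its regularity directly via the exact-sequence machinery set up earlier, rather than merely invoking CoCoA. First I would observe that $I$ is exactly $\F(\Delta_{3,3})$ after relabelling: the six monomials $x_1x_2, x_1x_3, x_2x_3, x_4x_5, x_4x_6, x_5x_6$ together with $x_1x_4, x_2x_5, x_3x_6$ record, respectively, the forbidden pairs within each of the two triangles $\{x_1,x_2,x_3\}$ and $\{x_4,x_5,x_6\}$ and the three diagonal matching edges. In fact $I$ is the $3$-clique ideal of the graph $G$ that is the complement of the line graph of $K_{3,3}$ restricted to six vertices; equivalently it is generated in degree $2$ and one checks that $I$ is the edge ideal of a graph $H$ on six vertices, namely two disjoint triangles plus a perfect matching between them (the prism graph $C_3 \times K_2$). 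So the statement reduces to $\reg(I(H)) = 3$ for $H$ the triangular prism.

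The main step is the regularity computation for this edge ideal. I would use Corollary \ref{add1} (or Remark \ref{add4}) with a carefully chosen splitting variable or edge. Picking the edge $f = x_1 x_4$, one computes $I : f$ and $(I, f)$. The colon ideal $I : x_1 x_4$ collapses: once $x_1$ and $x_4$ are inverted, the generators $x_1 x_2, x_1 x_3, x_4 x_5, x_4 x_6$ force $x_2, x_3, x_5, x_6$ into the ideal, so $I : x_1 x_4 = (x_2, x_3, x_5, x_6)$ plus possibly lower-degree leftovers, which is a complete intersection with $\reg = 1$, hence contributes $\reg(I:f) + 2 = 3$ to the bound. For $(I, f)$ one is left with the edge ideal of $H$ with the edge $x_1 x_4$ added, i.e. the graph on $\{x_1,\dots,x_6\}$ which still decomposes nicely; iterating the same short-exact-sequence argument on the remaining cross edges $x_2 x_5$ and $x_3 x_6$, each colon again produces a complete intersection of variables, and the terminal ideal is the edge ideal of two disjoint triangles, which by Lemma \ref{sum1}(4) has regularity $\reg((x_1x_2,x_1x_3,x_2x_3)) + \reg((x_4x_5,x_4x_6,x_5x_6)) - 1 = 2 + 2 - 1 = 3$ (each triangle edge ideal has a linear resolution up to the last step, with $\reg = 2$ by Lemma \ref{sqm}). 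Tracking all the terms through Remark \ref{add2} gives $\reg(I) \le 3$.

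For the reverse inequality $\reg(I) \ge 3$, I would exhibit an induced matching or use Lemma \ref{matching}: the two triangle-edges $x_1 x_2$ and $x_5 x_6$ are disjoint and no generator of $I$ lies inside $\{x_1, x_2, x_5, x_6\}$ other than these two (the only candidate would be a cross edge, but $x_1 x_5, x_1 x_6, x_2 x_5$—wait, $x_2 x_5 \in \G(I)$), so instead I would take $x_1 x_3$ and $x_4 x_5$: these are disjoint, and among $\{x_1, x_3, x_4, x_5\}$ the generators of $I$ are $x_1 x_3, x_4 x_5, x_1 x_4$, so this is not an induced matching either. The clean choice is to use $x_2 x_3$ and $x_4 x_6$ together with checking the induced subgraph; alternatively, the lower bound $\reg(S/I) \ge 2 = 2(m-1)$ with $m = 3$ is immediate from Corollary \ref{chessbord1} once we have made the identification $I = \F(\Delta_{3,3})$, giving $\reg(I) \ge 3$.

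The main obstacle I anticipate is purely bookkeeping: making the variable relabelling between $I$ as written and $\F(\Delta_{3,3})$ completely explicit, and then keeping the colon-and-add recursion organised so that every branch of the tree of short exact sequences is seen to contribute at most $3$ to $\reg(I)$. None of the individual computations is hard — each colon ideal degenerates to a complete intersection generated by variables, for which Lemma \ref{sum1} applies verbatim — but there are several branches, so the risk is an arithmetic slip in one of the degree shifts. A safe fallback, which the authors in fact use, is simply to record the CoCoA output; but the structural proof above is the one I would write first, since it also makes transparent why the answer is $3$ and not larger.
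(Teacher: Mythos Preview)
Your identification $I = \F(\Delta_{3,3})$ is incorrect, and this is not a cosmetic slip: it breaks your lower-bound argument. The facet ideal $\F(\Delta_{3,3})$ is generated by six monomials of degree $3$ (see Example~\ref{example1}), and by Theorem~\ref{facetcover} it satisfies $\reg(\F(\Delta_{3,3}))=5$, not $3$. What you have correctly identified is that $I$ is the edge ideal of the triangular prism $H = C_3 \times K_2$; but that graph is unrelated to $\F(\Delta_{3,3})$, so Corollary~\ref{chessbord1} cannot be invoked (and note that $2(m-1)=4$ when $m=3$, not $2$, which already signals the mismatch). Moreover, your first instinct --- to find an induced matching of size $2$ --- genuinely fails here: one checks that \emph{every} pair of disjoint edges of the prism spans a $4$-vertex set containing a third edge, so the induced matching number of $H$ is $1$ and Lemma~\ref{matching} gives only $\reg(S/I)\ge 1$. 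A correct lower bound is obtained instead from Fr\"oberg's theorem: the complement $H^{c}$ is the $6$-cycle $C_6$, which is not chordal, so $I=I(H)$ does not have a linear resolution and hence $\reg(I)\ge 3$.

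Your upper-bound outline is essentially right, but the bookkeeping needs repair. Since $x_1x_4\in\G(I)$, the ideal $(I,x_1x_4)$ equals $I$ and Corollary~\ref{add1} is vacuous; what you want is Remark~\ref{add4}, peeling off the three cross edges $x_1x_4,x_2x_5,x_3x_6$ in turn. Each colon $(J_s:u_s)$ is indeed the maximal ideal in four variables, contributing $\reg=1$ and hence $1+d_s-1=2$ to the bound; the terminal ideal $J_3$ is the edge ideal of two disjoint triangles. For that last piece, Lemma~\ref{sqm} does not apply (no generator of a triangle ideal has a private variable); use instead that $\reg(I(K_3))=2$ (e.g.\ by Fr\"oberg, since $K_3^{c}$ is edgeless and chordal) together with Lemma~\ref{sum1}(3), not (4), to get $\reg(J_3)=2+2-1=3$. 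Then Remark~\ref{add4} yields $\reg(I)\le\max\{2,2,2,3\}=3$. The paper itself simply records the CoCoA output and gives no argument by hand, so a corrected version of your write-up would in fact supply more than the paper does.
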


\begin{Lemma}\label{depth5} Let $n\geq 3$ be an integer  and let
$$
I=\left(\prod\limits_{k=1}^{n}x_{1k},\prod\limits_{k=1}^{n}x_{2k}\right)+\sum\limits_{\begin{subarray}{c}
                             i,j\in [n] \\
                             i<j
                           \end{subarray}}\left(\prod\limits_{k=1}^{2}\frac{x_{k1}\cdots x_{k n}}{x_{ki}x_{kj}}\right)$$
 be a monomial ideal of  $S=k[x_{11},\ldots,x_{1n},x_{21},\ldots,x_{2n}]$. Then $\reg\,(I)=2n-3.$
\end{Lemma}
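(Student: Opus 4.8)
\emph{Plan.} Write $S=k[x_{11},\dots,x_{1n},x_{21},\dots,x_{2n}]$ and set $u=x_{11}\cdots x_{1n}$, $v=x_{21}\cdots x_{2n}$, $p_k=x_{1k}x_{2k}$. The generator of $I$ indexed by $\{i,j\}$ equals $w_{ij}:=\prod_{k\notin\{i,j\}}p_k$, so with $W=(w_{ij}\mid 1\le i<j\le n)$ and $J=(v)+W$ we have $I=(u)+J$. The first step reduces the whole statement to a single bound on $\reg(S/J)$. Since $v$ is coprime to $u$ and divisible by each $\prod_{k\notin\{i,j\}}x_{2k}$, one gets $J:u=W:u=(\prod_{k\notin\{i,j\}}x_{2k}\mid i<j)$, the ideal generated by all squarefree monomials of degree $n-2$ in the second-row variables; it has a linear resolution, so $\reg(S/(J:u))=n-3$ and $\reg\big((S/(J:u))(-n)\big)=2n-3$. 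Applying Lemma~\ref{depthlemma}(2) to $0\to (S/(J:u))(-n)\stackrel{\cdot u}{\longrightarrow}S/J\to S/I\to 0$ gives $\reg(S/I)=\max\{2n-4,\ \reg(S/J)\}$ as soon as $\reg(S/J)\ne 2n-3$. Hence it suffices to prove
$$\reg(S/J)\ \le\ 2n-4,$$
and this one inequality yields simultaneously $\reg(I)\le 2n-3$ and, through the equality clause of Lemma~\ref{depthlemma}(2), $\reg(I)\ge 2n-3$.

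I would prove $\reg(S/J)\le 2n-4$ by induction on $n$, peeling off the last column, first through $x_{2n}$ and then through $x_{1n}$. A direct colon computation shows that $J:(x_{2n}x_{1n})$ is exactly the ideal ``$J$'' of the $2\times(n-1)$ board, while the ideals $(J,x_{2n})$, $(J:x_{2n},x_{1n})$ (and $J:x_{2n}$ itself) entering the two successive short exact sequences are, after deleting the variables $x_{1n},x_{2n}$ that split off into their own polynomial rings (which contributes $0$ to the regularity, by Lemma~\ref{sum1}), built from the ideal
$$U_{m}:=\Big(\prod\nolimits_{k\in[m]\setminus\{i\}}p_k\ \Big|\ i\in[m]\Big)\qquad(m=n-1),$$
the ``board with one column removed'', and from $J'_{m}:=(x_{21}\cdots x_{2m})+U_{m}$. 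Granting the auxiliary bounds $\reg(S/U_{m})\le 2m-2$ and $\reg(S/J'_{m})\le 2m-3$ recorded below, the induction hypothesis $\reg(S/J)\le 2(n-1)-4$ together with Lemma~\ref{depthlemma}(1) gives $\reg(S/(J:x_{2n}))\le 2n-5$ and $\reg(S/(J,x_{2n}))\le 2n-4$, whence $\reg(S/J)\le\max\{(2n-5)+1,\ 2n-4\}=2n-4$. (The case $n=2$ is degenerate, $J=S$; one may instead start at $n=3$, checking that case directly in the spirit of Lemma~\ref{three1}.)

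The two auxiliary statements are proved by the same column-peeling induction. For $U_m$ one has $U_m:(x_{1m}x_{2m})=U_{m-1}$, and the remaining summands reduce to a split-off variable together with the principal ideal $(p_1\cdots p_{m-1})$ (of regularity $2(m-1)$) or with a linear squarefree Veronese ideal, so that $\reg(S/U_m)\le 2m-2$; the base case is $\reg(S/U_2)=2$, a complete intersection of two quadrics. As a consistency check, Theorem~\ref{decomposition} identifies $\F(\Delta_{2,m})^{\vee}=(u_m,v_m)+U_m$, and Lemmas~\ref{quotient},~\ref{ha} with Theorem~\ref{depth}(b) give $\reg(\F(\Delta_{2,m})^{\vee})=\pd(S/\F(\Delta_{2,m}))=2m-2$. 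Likewise $J'_m:(x_{2m}x_{1m})=J'_{m-1}$ and the remaining pieces are split-off variables together with principal monomial ideals, giving $\reg(S/J'_m)\le 2m-3$; the base case is $\reg(S/J'_2)=1$, where $J'_2$ is the edge ideal of a path on four vertices and Lemma~\ref{path} applies.

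The delicate point throughout is the bookkeeping in the peeling: none of $W$, $J$, $U_m$, $J'_m$ has a linear resolution — already $\reg(S/W)=3>2n-4$ when $n=3$ — so one cannot simply discard $W$ after removing $v$; instead each short exact sequence must be arranged so that the ``free'' variables $x_{1n},x_{2n}$ are correctly separated off (Lemma~\ref{sum1}) from the generators that reassemble into the smaller board. A secondary subtlety is that the lower bound $\reg(I)\ge 2n-3$ is not obtained by exhibiting a large Betti number directly but is squeezed out of the equality clause of Lemma~\ref{depthlemma}(2) in the first reduction step; this forces one to keep the strict inequality $\reg(S/J)\le 2n-4<2n-3$ in play, which is exactly why the auxiliary inductions have to be run with the sharp exponents $2m-2$ and $2m-3$ rather than with looser estimates.
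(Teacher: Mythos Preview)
Your argument is correct and takes a genuinely different route from the paper's. The paper writes $I=J'+K$ with $K=(u,v,w_{12})$ and $J'$ the remaining $w_{ij}$, then peels off the elements of $J'$ one at a time in lexicographic order via Remark~\ref{add4}: for each $u_s=w_{pq}$ the colon $J_s:u_s$ is the edge ideal of a path or $4$-cycle on $\{x_{1p},x_{1q},x_{2p},x_{2q}\}$, of regularity~$2$ by Lemma~\ref{path}; the residual ideal $K$ is handled by Lemma~\ref{sqm} and one further short exact sequence, giving $\reg(K)=2n-3$. Both bounds then follow from the equality clause in Remark~\ref{add4}. Your route instead strips off $u$ first: the colon $J:u$ is a squarefree Veronese with linear resolution, and the equality clause of Lemma~\ref{depthlemma}(2) pins $\reg(S/I)$ to $2n-4$ as soon as $\reg(S/J)\le 2n-4$; you then prove this last inequality by induction on~$n$, peeling the last column through $x_{2n}$ and $x_{1n}$, with the pieces $(J,x_{2n})=(x_{2n})+U_{n-1}$ and $(J:x_{2n},x_{1n})=(x_{1n})+J'_{n-1}$ feeding into two auxiliary column-peeling inductions on $U_m$ and $J'_m$. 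The paper's approach avoids induction on~$n$ and keeps every colon in a fixed finite catalogue of tiny graph edge ideals; your approach is more structural, reduces everything to smaller boards, and extracts the lower bound from a single exact sequence rather than from the iterated machinery of Remark~\ref{add4}. The base case $n=3$ for $J$ (where $W=(p_1,p_2,p_3)$ and one needs $\reg(S/J)\le 2$) does require a short direct check rather than an appeal to Lemma~\ref{three1}, but this is routine.
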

\begin{proof}
We may write  $I$ as $I=J+K$, where $K=(\prod\limits_{k=1}^{n}x_{1k},\prod\limits_{k=1}^{n}x_{2k},\prod\limits_{k=3}^{n}x_{1k}x_{2k})$ and $J$ is  a monomial ideal with $\mathcal {G}(J)=\mathcal {G}(I)\setminus \mathcal {G}(K)$. We sort all the elements of
$\mathcal {G}(J)$ in lexicographic order
  with $x_{11}>x_{12}>\cdots>x_{1n}>x_{21}>x_{22}>\cdots>x_{2n}$, that is:
$$\prod\limits_{k=1}^{2}\frac{x_{k1}\cdots x_{k n}}{x_{k(n-1)}x_{kn}}>
\prod\limits_{k=1}^{2}\frac{x_{k1}\cdots x_{k n}}{x_{k(n-2)}x_{kn}}>\cdots>
\prod\limits_{k=1}^{2}\frac{x_{k1}\cdots x_{k n}}{x_{k1}x_{k4}}>
\prod\limits_{k=1}^{2}\frac{x_{k1}\cdots x_{k n}}{x_{k1}x_{k3}}.$$
Let $u_s$ be the $s$-th element in $\mathcal {G}(J)$ for $s\in [r]$, where $r=|\mathcal{G}(J)|$.
Put $J_0=I$, $J_s=K+(u_{s+1},\ldots,u_r)$ for $s\in [r-1]$ and $J_r=K$.

Note that the degree of $u_s$ equals to $2n-4$ for any $s\in[r]$, by Remark \ref{add4}, we obtain that
$$\reg(I)=\reg(J_0)\leq \max\{\reg(J_{s}:u_{s})+2n-5, \reg(J_r)\mid \ s\in[r]\}$$
and the equality holds if $\reg(J_{1}:u_{1})+2n-4>\max\{\reg\,(J_{s}:u_{s})+2n-5,\  \reg\,(J_{r})\mid 2\leq s\leq r\}$.

\medskip
First, we  prove that $\reg\,(J_s:u_s)=2$ for any $s\in [r]$.

Since $u_s=\prod\limits_{k=1}^{2}\frac{x_{k1}\cdots x_{k n}}{x_{kp}x_{kq}}$ with $p,q\in [n]$, $p<q$ and $(p,q)\neq (1,2)$, one has
$$J_s=\left(\prod\limits_{k=1}^{n}x_{1k},\prod\limits_{k=1}^{n}x_{2k}\right)+\sum\limits_{\begin{subarray}{c}
                             1\leq i<p,\,i<j\leq n\\
                             \text{or\ }i=p,\, p<j<q
                           \end{subarray}}\left(\prod\limits_{k=1}^{2}\frac{x_{k1}\cdots x_{k n}}{x_{ki}x_{kj}}\right).$$
By a direct calculation, we obtain
\begin{eqnarray*}
J_s:u_s&=& \left(\prod\limits_{k=1}^{n}x_{1k},\prod\limits_{k=1}^{n}x_{2k}\right):u_s+\sum\limits_{\begin{subarray}{c}
                             1\leq i<p,\,i<j\leq n\\
                             \text{or\ }i=p,\, p<j<q
                           \end{subarray}}\left(\prod\limits_{k=1}^{2}\frac{x_{k1}\cdots x_{k n}}{x_{ki}x_{kj}}\right):u_s\\
&=& \left\{\begin{array}{ll}
(x_{11}x_{1q},x_{21}x_{2q},x_{1q}x_{2q})\ \ &\text{if} \ \ p=1,\\
(x_{1p}x_{1q},x_{2p}x_{2q},x_{1p}x_{2p},x_{1q}x_{2q})\ \ &\text{if} \ \ p\geq 2.\\
\end{array}\right.
\end{eqnarray*}
It can be regarded as the edge ideal of a path or cycle  with $4$ vertices, it follows from Lemma \ref{path} (1) that
$\reg\,(J_s:u_s)=2$.

Next, we   prove that $\reg\,(J_r)=2n-3$.

Let $J_{r}=J_{r+1}+(\prod\limits_{k=1}^{n}x_{2k})$ and
$J_{r+1}=(\prod\limits_{k=1}^{n}x_{1k},\prod\limits_{k=3}^{n}x_{1k}x_{2k})$.
By Lemma \ref{sqm}, one has
$$\reg\,(J_{r+1})=\reg\,((\prod\limits_{k=1}^{n}\!x_{1k},\prod\limits_{k=3}^{n}\!x_{1k}x_{2k}))
=|\text{supp}\,(J_{r+1})|-|\mathcal {G}(J_{r+1})|+1=2n-3.$$
Moreover, $\reg\,(J_{r+1}:\prod\limits_{k=1}^{n}x_{2k})=n-2$ because of $J_{r+1}:\prod\limits_{k=1}^{n}x_{2k}=(\prod\limits_{k=3}^{n}x_{1k})$.
Thus, by Corollary \ref{add1}, we have
$$\reg\,(J_r)=\max\{\reg\,(J_{r+1}:\prod\limits_{k=1}^{n}x_{2k})+n-1, \reg\,(J_{r+1}) \}=2n-3.$$
This concludes the proof.\end{proof}

\begin{Lemma}\label{depth6}
Let $n\geq 4$ be an integer  and let
$$I=\sum\limits_{j=1}^n\left(\frac{x_{11}\cdots x_{1n}}{x_{1j}}, \frac{x_{21}\cdots x_{2n}}{x_{2j}}\right)+
\sum\limits_{\begin{subarray}{c}
                             i,j\in [n-1], \\
                             i<j
                           \end{subarray}}\left(\prod\limits_{k=1}^{2}\frac{x_{k1}\cdots x_{k(n-1)}}{x_{kj}x_{kj}}\right)$$
 be a monomial ideal of $S=k[x_{11},\ldots,x_{1n},x_{21},\ldots,x_{2n}]$. Then
$\reg\,(I)=2n-5.$
\end{Lemma}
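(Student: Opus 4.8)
The plan is to peel off, one at a time, the generators of $I$ divisible by $x_{1n}$ or by $x_{2n}$, reducing the problem to Lemma \ref{depth5}. The generators of $I$ \emph{not} divisible by $x_{1n}$ or $x_{2n}$ are exactly $x_{11}\cdots x_{1(n-1)}$, $x_{21}\cdots x_{2(n-1)}$ and the $\binom{n-1}{2}$ monomials $\prod_{k=1}^{2}\frac{x_{k1}\cdots x_{k(n-1)}}{x_{ki}x_{kj}}$ with $1\le i<j\le n-1$; let $I'$ be the ideal they generate. This $I'$ is precisely the ideal of Lemma \ref{depth5} with $n$ replaced by $n-1$ (the variables $x_{1n},x_{2n}$ do not occur in $I'$, hence are irrelevant for regularity), and since $n\ge 4$ gives $n-1\ge 3$, Lemma \ref{depth5} yields $\reg(I')=2(n-1)-3=2n-5$. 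The remaining $2(n-1)$ generators are $x_{1n}\cdot\frac{x_{11}\cdots x_{1(n-1)}}{x_{1a}}$ and $x_{2n}\cdot\frac{x_{21}\cdots x_{2(n-1)}}{x_{2a}}$ for $a\in[n-1]$, each of degree $n-1$.

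List these extra generators as $u_1,\dots,u_r$, with those divisible by $x_{1n}$ first, put $J_0=I$, $J_s=I'+(u_{s+1},\dots,u_r)$ for $s\in[r-1]$, and $J_r=I'$. Since $\deg u_s=n-1$ for all $s$, Remark \ref{add4} gives
$$\reg(I)\le\max\{\,\reg(J_s\!:\!u_s)+n-2,\ \reg(I')\ \mid\ s\in[r]\,\},$$
with equality once $\reg(J_1\!:\!u_1)+n-1$ strictly exceeds the same maximum taken over $2\le s\le r$. The key point is that $\reg(J_s\!:\!u_s)=n-3$ for every $s$. Take $u_s=x_{1n}\cdot\frac{x_{11}\cdots x_{1(n-1)}}{x_{1a}}$. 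Elementary gcd computations give: $(x_{11}\cdots x_{1(n-1)})\!:\!u_s=(x_{1a})$; $(v)\!:\!u_s=(x_{1a})$ for each still-unpeeled generator $v$ divisible by $x_{1n}$; $(x_{21}\cdots x_{2(n-1)})\!:\!u_s=(x_{21}\cdots x_{2(n-1)})$; the generators divisible by $x_{2n}$ together contribute $x_{2n}M'$, where $M'$ is the squarefree Veronese ideal of degree $n-2$ in $x_{21},\dots,x_{2(n-1)}$; and for $w=\prod_{k=1}^{2}\frac{x_{k1}\cdots x_{k(n-1)}}{x_{ki}x_{kj}}$, the ideal $(w)\!:\!u_s$ is generated by a squarefree monomial of degree $n-3$ in $\{x_{2k}\mid k\in[n-1]\setminus\{a\}\}$ if $a\in\{i,j\}$, and is contained in $(x_{1a})$ otherwise. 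Collecting these, absorbing the contributions contained in $(x_{1a})$, and using $x_{21}\cdots x_{2(n-1)}\in M_a$ and $x_{2n}M'\subseteq M'\subseteq M_a$, one obtains $J_s\!:\!u_s=(x_{1a})+M_a$, where $M_a$ is the squarefree Veronese ideal of degree $n-3$ in the $n-2$ variables $\{x_{2k}\mid k\in[n-1]\setminus\{a\}\}$. The $(n-3)$-subsets of an $(n-2)$-element set are exactly the arcs of length $n-3$ of a cycle, so $M_a$ is, after relabelling, the path ideal of length $n-3$ of the cycle on $n-2$ vertices; hence $\reg(M_a)=n-3$ by Lemma \ref{path}, and since $x_{1a}$ does not occur in $M_a$, Lemma \ref{sum1}(3) gives $\reg(J_s\!:\!u_s)=1+(n-3)-1=n-3$. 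The case $u_s=x_{2n}\cdot\frac{x_{21}\cdots x_{2(n-1)}}{x_{2a}}$ is symmetric.

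With $\reg(J_s\!:\!u_s)=n-3$ for all $s$ and $\reg(I')=2n-5$, the bound above becomes $\reg(I)\le\max\{2n-5,2n-5\}=2n-5$, and since $\reg(J_1\!:\!u_1)+n-1=2n-4>2n-5$ the equality clause of Remark \ref{add4} forces $\reg(I)=2n-5$. I expect the main difficulty to lie in the colon identity $J_s\!:\!u_s=(x_{1a})+M_a$: each individual gcd computation is routine, but one must verify that every cross term — the colons of the as-yet-unpeeled generators and of the generators $w$ from the double sum — is absorbed into $(x_{1a})+M_a$; the essential facts are $x_{21}\cdots x_{2(n-1)}\in M_a$, $M'\subseteq M_a$, and the identification of $M_a$ with a cycle path ideal that lets us invoke Lemma \ref{path}.
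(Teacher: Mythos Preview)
Your proof is correct and follows essentially the same approach as the paper: you peel off the $2(n-1)$ generators divisible by $x_{1n}$ or $x_{2n}$ via Remark~\ref{add4}, identify the residual ideal $I'$ with the ideal of Lemma~\ref{depth5} (for $n-1$) to get $\reg(I')=2n-5$, and compute each colon $J_s\!:\!u_s=(x_{1a})+M_a$ (resp.\ $(x_{2a})+N_a$) as a variable plus the path ideal of length $n-3$ on an $(n-2)$-cycle, with regularity $n-3$ by Lemma~\ref{path}. Your write-up in fact makes explicit the absorption steps ($x_{21}\cdots x_{2(n-1)}\in M_a$ and $x_{2n}M'\subseteq M_a$) that the paper leaves implicit in its displayed equality.
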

\begin{proof}
Let $I=J+K$, where $J=\sum\limits_{j=1}^{n-1}\left(\frac{x_{11}\cdots x_{1n}}{x_{1j}}, \frac{x_{21}\cdots x_{2n}}{x_{2j}}\right)$ and
$K$ is  a monomial ideal with $\mathcal {G}(K)=\mathcal {G}(I)\setminus \mathcal {G}(J)$. We sort all the elements of
$\mathcal {G}(J)$ in lexicographic order
  with $x_{11}>x_{12}>\cdots>x_{1n}>x_{21}>x_{22}>\cdots>x_{2n}$, that is:
$$\frac{x_{11}\cdots x_{1n}}{x_{1(n-1)}} >\cdots>
\frac{x_{11}\cdots x_{1n}}{x_{11}}>\frac{x_{21}\cdots x_{2n}}{x_{2(n-1)}} >\cdots>\frac{x_{21}\cdots x_{2n}}{x_{21}}.
$$
Let $u_s$ be the $s$-th element in $\mathcal {G}(J)$ and
 $J_{s}$ be  a monomial ideal with $\mathcal{G}(J_s)=\mathcal{G}(I)\setminus \{u_1,\ldots,u_s\}$  for $1\leq s\leq 2n-2$. Then $J_s=K+(u_{s+1},\ldots,u_{2n-2})$ for any $s\in [2n-3]$ and
$J_{2n-2}=K=\left(\prod\limits_{k=1}^{n-1}x_{1k},\prod\limits_{k=1}^{n-1}x_{2k}\right)+\sum\limits_{\begin{subarray}{c}
                             i,j\in [n-1], \\
                             i<j
                           \end{subarray}}\left(\prod\limits_{k=1}^{2}\frac{x_{k1}\cdots x_{k(n-1)}}{x_{kj}x_{kj}}\right)$.
It follows   from Lemma \ref{depth5} that $\reg\,(J_{2n-2})=2n-5$. Note that the degree of $u_s$ is $n-1$
for any $s\in[2n-2]$, by Remark \ref{add4}, we obtain that
$$\reg(I)\leq \max\{\reg(J_{s}:u_{s})+n-2, \reg(J_{2n-2})\mid \ s\in[2n-2]\}$$
and the equality holds if  $\reg(J_{1}:u_{1})+n-1>\max\{\reg\,(J_{s}:u_{s})+n-2,\  \reg\,(J_{2n-2})\mid 2\leq j\leq 2n-2\}$.

Now, we compute $\reg\,(J_s:u_s)$  for any $s\in [2n-2]$. We divide into the following  two cases:

(i) If $u_s=\frac{x_{11}\cdots x_{1n}}{x_{1p}}$ for some $p\in [n-1]$, then
\begin{eqnarray*}
K:u_s\!\!\!&=&\!\!\!\left(\prod\limits_{k=1}^{n-1}x_{1k},\prod\limits_{k=1}^{n-1}x_{2k}\right):u_s+\sum\limits_{\begin{subarray}{c}
                             i,j\in [n-1], \\
                             i<j
                           \end{subarray}}\left(\prod_{k=1}^{2}\frac{x_{k1}\cdots x_{k(n-1)}}{x_{kj}x_{kj}}\right):u_s\\
\!\!\!&=&\!\!\!(x_{1p},\prod\limits_{k=1}^{n-1}x_{2k})+\sum\limits_{j\in [n-1]\setminus \{p\}} \left(\prod_{k=1}^{2}\frac{x_{k1}\cdots x_{k(n-1)}}{x_{kp}x_{kj}}\right):\frac{x_{11}\cdots x_{1n}}{x_{1p}}\\
\!\!\!&=&\!\!\!(x_{1p},\prod\limits_{k=1}^{n-1}x_{2k})+\!\!\sum\limits_{j\in [n-1]\setminus \{p\}} \!\!\!\left(\frac{x_{21}\cdots x_{2(n-1)}}{x_{2p}x_{2j}}\right)\\
\end{eqnarray*}

\begin{eqnarray*}
	\!&=&\!(x_{1p})+\!\!\sum\limits_{j\in [n-1]\setminus \{p\}}\!\! \!\left(\frac{x_{21}\cdots x_{2(n-1)}}{x_{2p}x_{2j}}\right)
\end{eqnarray*}
where the second equality holds because $x_{1p}$ is a factor of  $\prod\limits_{k=1}^{2}\frac{x_{k1}\cdots x_{k(n-1)}}{x_{kj}x_{kj}}$ when $i,j\neq p$.
Therefore, one has
\[
J_s:u_s= K:u_s+(u_{s+1},\ldots,u_{2n-2}):u_s=(x_{1p})+\sum\limits_{j\in [n-1]\setminus \{p\}} \left(\frac{x_{21}\cdots x_{2(n-1)}}{x_{2p}x_{2j}}\right).
\]
Since $\sum\limits_{j\in [n-1]\setminus \{p\}} \left(\frac{x_{21}\cdots x_{2(n-1)}}{x_{2p}x_{2j}}\right)$ can be regarded as the path ideal of length $(n-3)$ of a cycle on the set $\{x_{21},\ldots,x_{2(n-1)}\}\setminus \{x_{2p}\}$. Thus,  by Lemmas \ref{sum1} and \ref{path}, we obtain
$$\reg\,(J_s:u_s)=\reg\,(\sum\limits_{j\in [n-1]\setminus \{p\}} \left(\frac{x_{21}\cdots x_{2(n-1)}}{x_{2p}x_{2j}}\right))=n-3.$$

(ii) If $u_s=\frac{x_{21}\cdots x_{2n}}{x_{2p}}$ for some $p\in [n-1]$, similar  to  (i), we obtain
$J_s:u_s=(x_{2p})+\sum\limits_{j\in [n-1]\setminus \{p\}} \left(\frac{x_{11}\cdots x_{1(n-1)}}{x_{1p}x_{1j}}\right)$
and $\reg\,(J_s:u_s)=n-3$. This finishes the proof.
 \end{proof}

\begin{Theorem}\label{m=3}
Let $\Delta_{3,n}$ be a chessboard complex with $n\geq 3$,  then
$$
\depth\,(S/\F(\Delta_{3,n}))=4.
$$
\end{Theorem}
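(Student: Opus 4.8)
The plan is to prove $\depth\,(S/\F(\Delta_{3,n}))=4$ by combining the already-established lower bound with a matching upper bound obtained via the short exact sequence / colon–ideal machinery of Corollary \ref{add1} and Remark \ref{add2}. The inequality $\depth\,(S/\F(\Delta_{3,n}))\geq 4$ is immediate from Theorem \ref{depth4} (or its corollary), since $\F(\Delta_{3,n})=\sum_{\D_{i_1,i_2,i_3}\in\Omega}\F(\D_{i_1,i_2,i_3})$. So the substance is the reverse inequality $\depth\,(S/\F(\Delta_{3,n}))\leq 4$, equivalently $\pd\,(S/\F(\Delta_{3,n}))\geq 3n-4$, or by Lemma \ref{ha} that $\reg\,(\F(\Delta_{3,n})^{\vee})\geq 3n-4$; but I expect the direct depth computation to be cleaner.

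First I would run the same filtration used in the proof of Theorem \ref{facetcover}: order $\mathcal G(\F(\Delta_{2,n}))$ lexicographically, set $J_0=\F(\Delta_{3,n})$, $J_p=J_0+(u_1,\dots,u_p)$, and exploit the short exact sequences relating $J_{p-1}$, $J_p$ and the colon ideals $J_{p-1}:u_p$. By the depth half of Corollary \ref{add1} and Remark \ref{add2},
$$\depth\,(S/\F(\Delta_{3,n}))\geq \min\bigl\{\depth\,(S/(J_{p-1}:u_{p})),\ \depth\,(S/J_{r})\mid 1\leq p\leq r\bigr\},$$
with equality when the first colon term realizes the minimum. Here $J_r=\F(\Delta_{2,n})$, and by Theorem \ref{depth}(b) together with Lemma \ref{quotient} we have $\depth\,(S/\F(\Delta_{2,n}))$ equal to $1$ or $2$ inside its own polynomial ring; lifting to the full ring $S$ (tensoring with the polynomial ring in the third-row variables and using Lemma \ref{sum1}(1)) adds $n$, giving $\depth\,(S/J_r)=n+1$ or $n+2$, which is $\geq 4$ for $n\geq 3$ but need not be exactly $4$. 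So the upper bound $4$ must come from one of the colon ideals $J_{p-1}:u_p$: I would identify a specific $u_p$ — most naturally the one with $i>j\geq 2$ as in formula (2)–(3) of the proof of Theorem \ref{facetcover} — for which $J_{p-1}:u_p$ decomposes, after stripping the linear forms in $K_3,L_1,L_2$ and using Lemma \ref{sum1}(1), into a sum involving exactly the monomial ideals analyzed in Lemma \ref{depth5} and Lemma \ref{depth6}. Those lemmas, via Lemma \ref{ha} (reading $\reg$ of the relevant Alexander-dual-type ideal as a projective dimension) or via a direct $\depth$ computation of the corresponding quotient, should pin the depth of that quotient at exactly $4$.

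The main obstacle will be the bookkeeping in the second step: extracting from the list of colon ideals in formulas (2) and (3) precisely which index $(i,j)$ produces a quotient of depth $4$, and then massaging that colon ideal into the exact shape of Lemma \ref{depth5}/\ref{depth6} so those regularity computations can be converted into the depth statement I need. In particular I must be careful that (a) the linear generators in $K_1,K_2,K_3,L_1,L_2$ are genuinely variable-disjoint from the "interesting" part so that Lemma \ref{sum1}(1) applies and contributes cleanly, and (b) the reduction of $\depth$ to the $\reg$ of Lemmas \ref{depth5}–\ref{depth6} uses Lemma \ref{ha} correctly (identifying the colon ideal, or a piece of it, as an Alexander dual of the ideal appearing there). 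Once the right index is isolated, the computation $\min\{\dots\}=4$ should follow, and combined with Theorem \ref{depth4} this yields $\depth\,(S/\F(\Delta_{3,n}))=4$.
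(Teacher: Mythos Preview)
Your lower bound via Theorem \ref{depth4} is fine. The upper-bound plan has a real gap: the colon ideals $J_{p-1}:u_p$ produced by the filtration in the proof of Theorem \ref{facetcover} are \emph{not} the ideals handled by Lemmas \ref{depth5} and \ref{depth6}. As formulas (2)--(3) of that proof show, each $J_{p-1}:u_p$ is a sum of ideals generated by variables (the $K_\ell$, $L_\ell$) and a few degree-$2$ products of such ideals; the ideals in Lemmas \ref{depth5}--\ref{depth6}, on the other hand, are generated by monomials of degree $n$, $n-1$, $2(n-2)$ --- exactly the shapes that come from the \emph{Alexander dual} $\F(\Delta_{3,n})^\vee$ written out via Theorem \ref{decomposition}. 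There is no ``massaging'' that turns one into the other.

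The paper takes the route you mention and then discard: it applies Lemma \ref{ha} and Auslander--Buchsbaum to get $\depth(S/\F(\Delta_{3,n}))=3n-\reg(\F^\vee)$, writes $\F^\vee$ explicitly from Theorem \ref{decomposition}, peels off the generator $\prod_k x_{3k}$ via Corollary \ref{add1}, and computes the regularity of the colon $J:\prod_k x_{3k}$ and of the residual $J$. Lemmas \ref{depth5} and \ref{depth6} are purpose-built for pieces of $\F^\vee$ in this computation, yielding $\reg(\F^\vee)=3n-4$ and hence $\depth=4$. A secondary problem with your direct-depth plan: even if some colon $J_{p-1}:u_p$ did have quotient of depth exactly $4$, the equality clause in Remark \ref{add2} requires that colon to be the \emph{first} one and simultaneously a minimum over all the others, so you would still owe lower bounds on the depths of every remaining $S/(J_{q-1}:u_q)$ and of $S/J_r$ --- work you have not accounted for.
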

 \begin{proof}
Let $\F^{\,\vee}$ be the Alexander dual of the facet ideal $\F(\Delta_{3,n})$. Then,  by Lemma \ref{ha} and Auslander-Buchsbaum formula, we have
 $$\depth\,(S/\F(\Delta_{3,n}))=3n-\text{pd}\,(S/\F(\Delta_{3,n}))=3n-\reg\,(\F^{\,\vee}).$$
We will prove that $\reg\,(\F^{\,\vee})=3n-4$,  the desired result follows.

By Theorem  \ref{decomposition}, one has
\begin{eqnarray*}
\F^{\,\vee}&=&\left(\prod\limits_{k=1}^{n}x_{1k},\prod\limits_{k=1}^{n}x_{2k},\prod\limits_{k=1}^{n}x_{3k}\right)+
\sum\limits_{\begin{subarray}{c}
                             i,j\in [n], \\
                             i<j
                           \end{subarray}}\left(\prod\limits_{k=1}^{3}\frac{x_{k1}\cdots x_{kn}}{x_{ki}x_{kj}}\right)+\\
&&\sum\limits_{j=1}^n\left(\prod\limits_{k=1,2}\frac{x_{k1}\cdots x_{kn}}{x_{kj}},
\prod\limits_{k=1,3}\frac{x_{k1}\cdots x_{kn}}{x_{kj}}, \prod\limits_{k=2,3}\frac{x_{k1}\cdots x_{kn}}{x_{kj}}\right).
\end{eqnarray*}
Let $J$ be a monomial ideal with $\mathcal {G}(J)=\mathcal {G}(\F^{\,\vee})\setminus \{\prod\limits_{k=1}^{n}x_{3k}\}$.
Claim: $\reg\,(J:\prod\limits_{k=1}^{n}x_{3k})=2n-3$ and $\reg(J)\leq 3n-4$. Thus the asserted results hold by Corollary \ref{add1}.

First, we  compute $\reg\,(J:\prod\limits_{k=1}^{n}x_{3k})$.
 Note that
$$J:\prod\limits_{k=1}^{n}x_{3k}=\sum\limits_{j=1}^n\left(\frac{x_{11}\cdots x_{1n}}{x_{1j}}, \frac{x_{21}\cdots x_{2n}}{x_{2j}}
\right)+
\sum\limits_{\begin{subarray}{c}
                             i,j\in [n], \\
                             i<j
                           \end{subarray}}\left(\prod\limits_{k=1}^{2}\frac{x_{k1}\cdots x_{kn}}{x_{ki}x_{kj}}\right).$$

If $n=3$, then $J:\prod\limits_{k=1}^{3}x_{3k}=\sum\limits_{k=1,2}(x_{k1}x_{k2},x_{k1}x_{k3},x_{k2}x_{k3})+
(x_{11}x_{21},x_{12}x_{22},x_{13}x_{23})$.
Thus $\reg\,(J:\prod\limits_{k=1}^{3}x_{3k})=3=2n-3$ by Lemma \ref{three1}.

If $n\geq 4$.  Set
$W=\left\{\prod\limits_{k=1}^{2}\frac{x_{k1}\cdots x_{k(n-1)}}{x_{kp}x_{kq}}\mid  p,q\in [n-1], p<q\right\}$,
we sort all the elements of $W$ in lexicographic order  with $x_{11}>x_{12}>\cdots>x_{1n}>x_{21}>x_{22}>\cdots>x_{2n}$, that is:
$$\prod\limits_{k=1}^{2}\frac{x_{k1}\cdots x_{k(n-1)}}{x_{k(n-2)}x_{k(n-1)}}>
\prod\limits_{k=1}^{2}\frac{x_{k1}\cdots x_{k(n-1)}}{x_{k(n-3)}x_{k(n-1)}}>\cdots>
\prod\limits_{k=1}^{2}\frac{x_{k1}\cdots x_{k(n-1)}}{x_{k1}x_{k3}}>
\prod\limits_{k=1}^{2}\frac{x_{k1}\cdots x_{k(n-1)}}{x_{k1}x_{k2}}.$$
Let $u_s$ be the $s$-th element in $W$ and $I_s=(J:\prod\limits_{k=1}^{n}x_{3k})+(u_1,\ldots,u_s)$  for  $s\in [r]$. Then
$$I_r=(J:\prod\limits_{k=1}^{n}x_{3k})+(W)=\sum\limits_{j=1}^n\left(\frac{x_{11}\cdots x_{1n}}{x_{1j}}, \frac{x_{21}\cdots x_{2n}}{x_{2j}}
\right)+
\sum\limits_{\begin{subarray}{c}
                             i,j\in [n-1], \\
                             i<j
                           \end{subarray}}\left(\prod\limits_{k=1}^{2}\frac{x_{k1}\cdots x_{k(n-1)}}{x_{ki}x_{kj}}\right).$$
It follows from Lemma \ref{depth6} that $\reg\,(I_r)=2n-5. \hspace{5.0cm}(1)$

Now, we compute $\reg\,(I_{s-1}:u_s)$ for any $s\in [r]$.

Set $\prod\limits_{k=1}^{2}\frac{x_{k1}\cdots x_{k(n-1)}}{x_{kp}x_{kq}}$ for some  $p,q\in[n-1]$ and $p<q$. By simple calculations, one has
\begin{eqnarray*}
(J:\prod\limits_{k=1}^{n}x_{3k}):u_s\!\!&=&\!\!
\sum\limits_{j=1}^n\left(\frac{x_{11}\cdots x_{1n}}{x_{1j}}, \frac{x_{21}\cdots x_{2n}}{x_{2j}}
\right):u_s+
\sum\limits_{\begin{subarray}{c}
                             i,j\in [n], \\
                             i<j
                           \end{subarray}}\left(\prod\limits_{k=1}^{2}\frac{x_{k1}\cdots x_{kn}}{x_{ki}x_{kj}}\right):u_s\\
&=&\sum\limits_{k=1,2}(x_{kp}x_{kq},x_{kp}x_{kn},x_{kq}x_{kn})+(x_{1p}x_{2p},x_{1q}x_{2q},x_{1n}x_{2n}).
\end{eqnarray*}
Hence, it is easy to check that
\begin{eqnarray*}
I_{s-1}:u_s&=&((J:\prod\limits_{k=1}^{n}x_{3k})+(u_{i}\mid i\in [s-1])):u_s\\
&=& (x_{1p}x_{1q},x_{1p}x_{1n},x_{1q}x_{1n},x_{2p}x_{2q},x_{2p}x_{2n},x_{2q}x_{2n},x_{1p}x_{2p},x_{1q}x_{2q},x_{1n}x_{2n}),
\end{eqnarray*}
where $(u_{i}\mid i\in [0])=(0)$. It follows from Lemma \ref{three1} that
$$\reg\,(I_{s-1}:u_s)=3.\eqno(2)$$
Therefore, by Remark \ref{add2} and formulas (1), (2), one has $\reg\,(J:\prod\limits_{k=1}^{3}x_{3k})=2n-3$.

Finally, we   prove $\reg\,(J)\leq 3n-4$.

We may write $J$ as $J=\left(\prod\limits_{k=1}^{n}x_{1k},\prod\limits_{k=1}^{n}x_{2k}\right)+K_1+K_2$ where $K_2=\sum\limits_{\begin{subarray}{c}
                             i,j\in [n], \\
                             i<j
                           \end{subarray}}\left(\prod\limits_{k=1}^{3}\frac{x_{k1}\cdots x_{kn}}{x_{ki}x_{kj}}\right)$ and $K_1=\sum\limits_{j=1}^n\left(\prod\limits_{k=1,2}\!\!\frac{x_{k1}\cdots x_{kn}}{x_{kj}},
\prod\limits_{k=1,3}\!\!\frac{x_{k1}\cdots x_{kn}}{x_{kj}}, \prod\limits_{k=2,3}\!\!\frac{x_{k1}\cdots x_{kn}}{x_{kj}}\right)$.
We sort all the elements of $\mathcal {G}(K_1)$ and $\mathcal {G}(K_2)$ in lexicographic order
  with $x_{11}>x_{12}>\cdots>x_{1n}>x_{21}>x_{22}>\cdots>x_{2n}>x_{31}>x_{32}>\cdots>x_{3n}$ respectively.
Let $w_{p}$ and $w_{r_1+q}$  be the $p$-th element of $\mathcal {G}(K_1)$ and the $q$-th element of $\mathcal {G}(K_2)$ respectively, where $p\in [r_1]$, $r_1=|\mathcal {G}(K_1)|$, $q\in [r_2]$ and $r_2=|\mathcal {G}(K_2)|$.
Put $J_s=\left(\prod\limits_{k=1}^{n}x_{1k},\prod\limits_{k=1}^{n}x_{2k}\right)+(w_{s+1},\ldots,w_r)$  for any $s\in [r-1]$, where   $r=r_1+r_2$ and $J_r=\left(\prod\limits_{k=1}^{n}x_{1k},\prod\limits_{k=1}^{n}x_{2k}\right)$. It follows  that $\reg\,(J_r)=2n-1\leq 3n-4$ from Lemma  \ref{sum1} (3).

Claim:  $\reg\,(J_s:w_s)+d_s\leq 3n-3$  for any $s\in[r]$, where $d_s$ is the degree of  $w_s$. Thus we obtain that $\reg\,(J)\leq 3n-4$ by  Remark \ref{add4}.

We prove this claim by considering the following two cases:

(i) If $w_{s}\in \mathcal {G}(K_1)$,
then we can assume that $w_s=\prod\limits_{k=\ell_1,\ell_2}\frac{x_{k1}\cdots x_{kn}}{x_{kp}}$ for $1\leq \ell_1< \ell_2\leq 3$ and   $p\in [n]$. Thus
\begin{eqnarray*}
J_s:w_s&=&\left((\prod\limits_{k=1}^{n}x_{1k},\prod\limits_{k=1}^{n}x_{2k})+(w_{s+1},\ldots,w_{r})\right):w_s\\
&=&\left(\prod\limits_{k=1}^{n}x_{1k}, \prod\limits_{k=1}^{n}x_{2k}\right)\!:\!w_s\!+ \! (w_{s+1},\ldots,w_{r_1})\!:\!w_s
\!+\!\sum\limits_{\begin{subarray}{c}
                             i,j\in [n], \\
                             i<j
                           \end{subarray}}\left(\prod\limits_{k=1}^{3}\!\frac{x_{k1}\cdots x_{kn}}{x_{ki}x_{kj}}\right)\!:\!u_s\\
&=& \left\{\begin{array}{ll}
(x_{1p},x_{2p})+J'_s\ \ &\text{if}\  \ell_1=1\text{\ and\ } \ell_2=2,\\
(x_{\ell_1p})+J'_s\  &\text{otherwise.}
\end{array}\right.
\end{eqnarray*}
where $\{\ell_3\}=[3]\setminus \{\ell_1,\ell_2\}$, $J'_s=\sum\limits_{j\in [n]\setminus \{p\}} \left(\frac{x_{\ell_{31}}\cdots x_{\ell_{3n}}}{x_{\ell_{3p}}x_{\ell_{3j}}}\right)$.

 Note that $J'_s$ may be regarded as the path ideal of length $(n-2)$ of a cycle  on the vertex set $\{x_{\ell_31},\ldots,x_{\ell_3n}\}\setminus\{x_{\ell_3p}\}$. By Lemmas \ref{sum1} and \ref{path} (2), one has
$$\reg(J_s:w_s)+d_s=\reg(J'_s)+d_s=(n-2)+2(n-1)=3n-4<3n-3.\eqno(3)$$

(ii) If $w_{s}\in \mathcal {G}(K_2)$, then
$w_s=\prod\limits_{k=1}^{3}\frac{x_{k1}\cdots x_{kn}}{x_{kp}x_{kq}}$ for some $p,q\in [n]$, where $p<q$. By direct calculations, we obtain

\begin{eqnarray*}
J_s:w_s\!\!&=&\!\!\left\{\begin{array}{ll}
\left(\prod\limits_{k=1}^{n}x_{1k}, \prod\limits_{k=1}^{n}x_{2k}\right):w_s\ \ &\text{if}\ \ s=r\\
\left(\prod\limits_{k=1}^{n}x_{1k}, \prod\limits_{k=1}^{n}x_{2k}\right):w_s+(w_{s+1},\ldots,w_{r}):w_s\ \ &\text{if} \ \ s<r\\
\end{array}\right.\\
	\!\!&=&\!\!\left\{\begin{array}{ll}
(x_{11}x_{12},x_{21}x_{22})\  &\text{if} \ s=r\\
(x_{1p}x_{1q},x_{2p}x_{2q})\!+\!\sum\limits_{\begin{subarray}{c}
                             i=p,\, j<q\\
                             \text{or}\ 1\leq i< p,j \leq n
                           \end{subarray}}\!\!\!\left(\prod\limits_{k=1}^{3}\frac{x_{k1}\cdots x_{kn}}{x_{ki}x_{kj}}\right):w_s\ &\text{if}  \ s<r\\
\end{array}\right.\\
\!\!&=&\!\! \left\{\begin{array}{ll}
(x_{11}x_{12},x_{21}x_{22})\ \ &\text{if}\ \ p=1\text{\ and\ } q=2\\
(x_{1p}x_{1q},x_{2p}x_{2q},x_{1q}x_{2q}x_{3q})\ \ &\text{if} \ \ p=1\text{\ and\ } q\geq 3\\
(x_{1p}x_{1q},x_{2p}x_{2q},x_{1p}x_{2p}x_{3p},x_{1q}x_{2q}x_{3q})\ \ &\text{otherwise}.
\end{array}\right.
\end{eqnarray*}
Hence   $\reg\,(J_s:w_s)=3$ if $p=1$ from  the expression of $J_s:w_s$  and Lemma \ref{sqm}; Otherwise,
let $J''_s=(x_{1p}x_{1q},x_{2p}x_{2q},x_{1p}x_{2p}x_{3p})$, then $J''_s:x_{1q}x_{2q}x_{3q}=(x_{1p},x_{2p})$.
From Lemma \ref{sqm}, $\reg\,(J''_s)=3$ and $\reg\,(J''_s:x_{1q}x_{2q}x_{3q})=1$. Hence
$\reg\,(J_s:w_s)=3$ by Corollary \ref{add1}.
This implies  $\reg\,(J_s:w_s)+d_s=3+3(n-2)=3n-3$.
This proof is completed.
\end{proof}

 Let $I\subset S$  be a monomial ideal. The $a$-invariant of $S/I$, denoted by $a(S/I)$, is the degree (as a rational function) of
the Hilbert series of $S/I$.

\begin{Corollary}
Let $\Delta_{m,n}$ be a chessboard complex with $n\geq m$. If $m\leq 3$, then
$$
a(S/\F(\Delta_{m,n}))=0.
$$
 \end{Corollary}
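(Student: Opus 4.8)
The plan is to reduce the statement to a sign‑weighted count of subsets of the board and then evaluate that count by inclusion–exclusion. Since $\F:=\F(\Delta_{m,n})$ is squarefree, $S/\F$ is the Stanley–Reisner ring $k[\Delta']$ of the complex $\Delta':=\delta(\F)$. For any simplicial complex $\Delta$ the $h$‑polynomial of $k[\Delta]$ has degree at most $d:=\dim k[\Delta]$, so $a(k[\Delta])\le 0$, with equality exactly when the top coefficient $h_d(\Delta)$ is nonzero; and $h_d(\Delta)=(-1)^{d-1}\widetilde\chi(\Delta)$ (see \cite{BH}). (One may also simply note that $a(S/\F)\le\reg(S/\F)-\depth(S/\F)=2(m-1)-2(m-1)=0$ by Theorems~\ref{facetcover} and \ref{m=3} and the Corollary after Theorem~\ref{depth4}.) Thus
$$a\big(S/\F(\Delta_{m,n})\big)=0\quad\Longleftrightarrow\quad \widetilde\chi\big(\delta(\F(\Delta_{m,n}))\big)\ne 0,$$
and the whole problem becomes the verification of the right‑hand side for $m\le 3$.

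Next I would make $\delta(\F(\Delta_{m,n}))$ explicit. A subset $F$ of the vertex set $V$ is a face of $\delta(\F)$ precisely when $F$ contains no facet of $\Delta_{m,n}$, i.e.\ no $m$ cells lying in pairwise distinct rows and columns; by K\"onig's theorem this holds if and only if $F$ is contained in a union of at most $m-1$ \emph{lines} (full rows or full columns) of the board. Equivalently, by Theorem~\ref{decomposition} the facets of $\delta(\F(\Delta_{m,n}))$ are exactly the sets $V\setminus C$, that is, the unions of $s$ rows and $m-1-s$ columns with $0\le s\le m-1$. Writing
$$\Sigma_m(n):=\sum_{\substack{F\subseteq V\\ F\text{ lies in a union of }\le m-1\text{ lines}}}(-1)^{|F|}=-\,\widetilde\chi\big(\delta(\F(\Delta_{m,n}))\big),$$
it then suffices to prove $\Sigma_m(n)\ne 0$ for all $n\ge m$ when $m\le 3$.

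For $m=1$ only $F=\emptyset$ is admissible, so $\Sigma_1(n)=1$. For $m=2$ the admissible $F$ are those contained in a single line; an inclusion–exclusion over the $m+n$ lines (using that $\sum_{F\subseteq Z}(-1)^{|F|}$ vanishes unless $Z=\emptyset$, and that a nonempty intersection of lines is either a line or a single cell) collapses the sum to $\Sigma_2(n)=n-1$, which is nonzero because $n\ge 2$. For $m=3$ I would run the analogous inclusion–exclusion over the maximal admissible sets, namely the unions of two lines, split into the three types of Theorem~\ref{decomposition}: two rows, one row together with one column, and two columns. The resulting binomial sums yield an explicit value of $\Sigma_3(n)$ — for instance $\Sigma_3(3)=1$ — and one checks that this value never vanishes for $n\ge 3$.

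The hard part is the $m=3$ evaluation. The three families of two‑line sets overlap heavily (two such sets can share a whole line, a single cell, or nothing), so the inclusion–exclusion must be organized carefully to avoid double counting, and — exactly as in Corollary~\ref{dimension}(c) and Theorem~\ref{facetcover} — the closed form for $\Sigma_3(n)$ is liable to take a slightly different shape in the small cases $n=3,4$ than for $n\ge 5$. Once $\Sigma_3(n)\ne 0$ has been established, the corollary follows at once from the reduction in the first paragraph.
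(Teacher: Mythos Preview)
Your reduction to the nonvanishing of $\widetilde\chi(\delta(\F))$ is valid, the identity $h_d(\Delta')=(-1)^{d-1}\widetilde\chi(\Delta')$ is correct, and the $m=1,2$ computations are fine. But this is a genuinely different route from the paper's. The paper's proof is a single line: it invokes \cite[Theorem~3.20]{MV} to obtain the sandwich $0\le a(S/\F(\Delta_{m,n}))\le \reg(S/\F(\Delta_{m,n}))-\depth(S/\F(\Delta_{m,n}))$, and then observes that the right-hand side vanishes by the equalities $\reg=\depth=2(m-1)$ already established for $m\le 3$ in Theorems~\ref{reg}, \ref{depth}, \ref{facetcover} and~\ref{m=3}. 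Thus in the paper the corollary is an immediate by-product of the heavy work of Sections~4--5, whereas your approach trades that work for an independent Euler-characteristic computation. You even record the upper bound $a\le\reg-\depth=0$ parenthetically, so you are essentially one citation away from the paper's short argument.

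The genuine gap in your proposal is the $m=3$ case. You set up the inclusion--exclusion over the three families of two-line covers (two rows, one row with one column, two columns), correctly note that their pairwise intersections can be a full line, a single cell, or empty, and then assert that ``one checks that this value never vanishes for $n\ge 3$'' without ever producing a closed form for $\Sigma_3(n)$ or verifying its nonvanishing. Since this nonvanishing is exactly what separates $a=0$ from $a<0$, the proof as written is incomplete; ``one checks'' is not a check. If you wish to keep the combinatorial route you must actually carry out that computation (and it is not a one-liner: there are $3+3n+\binom{n}{2}$ maximal covers with several overlap patterns). Otherwise the paper's sandwich argument, which you already half-stated, finishes the job immediately.
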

 \begin{proof} By \cite[Theorem 3.20]{MV}, we have $0\leq a(S/\F(\Delta_{m,n}))\leq \reg\,(S/\F(\Delta_{m,n}))-\depth\,(S/\F(\Delta_{m,n}))=0$, as desired.
\end{proof}

We conclude this paper with the following natural conjectures.
\begin{Conjecture}\label{chessbord}
Let $\Delta_{m,n}$ be a chessboard complex with $n\geq m\geq 1$, then
$$
\reg\,(S/\F(\Delta_{m,n}))=\depth\,(S/\F(\Delta_{m,n}))=2(m-1).
$$
 \end{Conjecture}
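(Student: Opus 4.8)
The plan is as follows. Since the lower bounds $\reg(S/\F(\Delta_{m,n}))\ge 2(m-1)$ and $\depth(S/\F(\Delta_{m,n}))\ge 2(m-1)$ are already in hand (Corollary~\ref{chessbord1} and the corollary following Theorem~\ref{depth4}), the conjecture is equivalent to the two upper bounds $\reg(\F(\Delta_{m,n}))\le 2m-1$ and $\depth(S/\F(\Delta_{m,n}))\le 2(m-1)$. I would prove each by induction on $m$, with $m\le 3$ (Theorems~\ref{depth}, \ref{facetcover}, \ref{m=3}) as base cases, generalizing the machinery already developed there; note that deleting a column keeps us in the range $n\ge m$ and deleting a row in the range $n\ge m-1$, so the inductive dimensions work out just as in Theorem~\ref{depth4}.

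For the regularity bound, view $\F(\Delta_{m-1,n})$ as the facet ideal on the first $m-1$ rows. Every generator $x_{1\ell_1}\cdots x_{m\ell_m}$ of $\F(\Delta_{m,n})$ is divisible by $x_{1\ell_1}\cdots x_{(m-1)\ell_{m-1}}\in\mathcal{G}(\F(\Delta_{m-1,n}))$, so $\F(\Delta_{m,n})+\F(\Delta_{m-1,n})=\F(\Delta_{m-1,n})$. Ordering $\mathcal{G}(\F(\Delta_{m-1,n}))=\{u_1,\dots,u_r\}$ lexicographically and putting $J_0=\F(\Delta_{m,n})$, $J_p=J_0+(u_1,\dots,u_p)$, Remark~\ref{add2} gives
$$\reg(\F(\Delta_{m,n}))\le\max\{\reg(J_p\!:\!u_{p+1})+(m-1),\ \reg(\F(\Delta_{m-1,n}))\mid 0\le p<r\}.$$
Since $\reg(\F(\Delta_{m-1,n}))=2m-3$ by induction, it suffices to prove $\reg(J_p\!:\!u_{p+1})\le m$ for every $p$. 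Here $J_p\!:\!u_{p+1}=(\F(\Delta_{m,n})\!:\!u_{p+1})+((u_1,\dots,u_p)\!:\!u_{p+1})$; conditioning on the column occupied by the row-$m$ entry of a generator decomposes the first summand as $K_m+\sum_{t=1}^{m-1}x_{m,i_t}I_t$, where $K_m$ is the linear ideal of the row-$m$ variables in columns not used by $u_{p+1}$, and each $I_t$ is again a sum of products of linear ideals with a facet ideal on $m-1$ rows and one deleted column (the $m$-row analogue of formula~(2) in Theorem~\ref{facetcover}). For a term order chosen so that the second summand $(u_1,\dots,u_p)\!:\!u_{p+1}$ stays path/cycle-like (as it was for $m\le 3$), one peels variables off $J_p\!:\!u_{p+1}$ one at a time via Corollary~\ref{add1} and Lemma~\ref{depthreglemma}, bottoming out at zero ideals, complete intersections of linear forms, small edge ideals, path ideals (Lemma~\ref{path}), or facet ideals $\F(\Delta_{m-1,\cdot})$ shifted far enough down to be harmless; the degree bookkeeping should keep the running bound at $m$.

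For the depth bound, Lemma~\ref{ha} and Auslander--Buchsbaum give $\depth(S/\F(\Delta_{m,n}))=mn-\reg(\F^{\,\vee})$, so one must show $\reg(\F^{\,\vee})\ge mn-2m+2$; together with the known depth lower bound this forces $\reg(\F^{\,\vee})=mn-2m+2$. As in the proof of Theorem~\ref{m=3}, let $J$ be $\F^{\,\vee}$ with the single generator $\prod_k x_{mk}$ removed, so $\F^{\,\vee}=(J,\prod_k x_{mk})$; Corollary~\ref{add1}(2) then yields $\reg(\F^{\,\vee})\le\max\{\reg(J\!:\!\prod_k x_{mk})+n-1,\ \reg(J)\}$, with equality once $\reg(J)\ne\reg(J\!:\!\prod_k x_{mk})+n$. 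Using the explicit description of $\F^{\,\vee}$ from Theorem~\ref{decomposition}, the two facts to establish are $\reg(J\!:\!\prod_k x_{mk})=(m-1)(n-2)+1$ and $\reg(J)\le mn-2m+2$: the colon ideal is essentially the Alexander-dual datum ``one row down,'' which should succumb to a nested induction generalizing Lemmas~\ref{depth5} and~\ref{depth6}, and the bound on $\reg(J)$ again follows by peeling off generators (Remark~\ref{add4}) down to edge ideals of triangles and path ideals.

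The main obstacle is a uniform-in-$m$ control of the regularity of the growing families of colon ideals above. Even for $m=3$ this needed the two auxiliary Lemmas~\ref{depth5} and~\ref{depth6}, each with its own generator-by-generator induction, plus a computer check (Lemma~\ref{three1}) for a base case; for general $m$ one must identify the right family of auxiliary monomial ideals, find a closed regularity formula for them, and choose term orders so that the ``earlier-generators'' colons $(u_1,\dots,u_p)\!:\!u_{p+1}$ keep a path/cycle structure of bounded regularity. Isolating an induction-friendly invariant that closes simultaneously for all $m$ is the crux. A possible alternative for the depth half is a topological attack via Hochster's formula: $\reg(\F^{\,\vee})=\pd(S/\F(\Delta_{m,n}))=mn-2m+2$ amounts to the nonvanishing of $\widetilde{H}_{2m-3}$ of the complex $\delta(\F(\Delta_{m,n}))$, whose facets are the unions of $s$ rows and $m-1-s$ columns for $0\le s\le m-1$; a Mayer--Vietoris or nerve argument on this ``cross-shaped'' complex might deliver the nonvanishing more cleanly, though its homotopy type does not seem to be recorded in the literature.
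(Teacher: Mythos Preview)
The statement you are attempting is labeled \emph{Conjecture} in the paper and is not proved there. The paper establishes only the lower bounds $\reg(S/\F(\Delta_{m,n}))\ge 2(m-1)$ and $\depth(S/\F(\Delta_{m,n}))\ge 2(m-1)$ for general $m$, proves the matching upper bounds only for $m\le 3$ (Theorems~\ref{reg}, \ref{depth}, \ref{facetcover}, \ref{m=3}), and then records a single computer verification for $(m,n)=(4,4)$. There is therefore no ``paper's own proof'' to compare against.

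Your proposal is an honest sketch of how one might push the $m=3$ argument to general $m$, and the overall architecture---peel off the last row, reduce to colon ideals, and control those inductively---is exactly the shape of the paper's $m=3$ proofs. But you have correctly identified the real gap yourself: the step ``the degree bookkeeping should keep the running bound at $m$'' is not justified, and the auxiliary ideals that for $m=3$ were handled by the ad hoc Lemmas~\ref{depth5}, \ref{depth6} and the CoCoA check in Lemma~\ref{three1} have no analogue for $m\ge 4$. In particular, the colon ideals $(u_1,\dots,u_p):u_{p+1}$ need not remain path- or cycle-like for larger $m$, and the nested inductions in Lemmas~\ref{depth5}--\ref{depth6} already required delicate case analysis that does not obviously stabilize. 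So what you have is a plausible strategy, not a proof; the conjecture remains open in the paper and your outline does not close it.
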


Of course, the above conjecture is true if $m\le 3$ by Theorem \ref{facetcover} and Theorem \ref{m=3}. By using CoCoA \cite{Co}, we get that $\reg\,(S/\F(\Delta_{4,4}))=\depth\,(S/\F(\Delta_{4,4}))=6$,
which  is consistent with the above conjecture.

\medskip
\hspace{-6mm} {\bf Acknowledgments}

 \vspace{3mm}
\hspace{-6mm}
The authors are grateful to the computer algebra system CoCoA \cite{Co} for providing us with a large number of examples.
This research is supported by the Natural Science Foundation of Jiangsu Province (No. BK20221353)  and  by foundation of the Priority Academic Program Development of Jiangsu Higher Education Institutions.
Finally, the authors would like to thank  referees who read carefully the manuscript and gave very helpful comments, which improved the paper both in mathematics and presentation.

\end{document}